\numberwithin{equation}{section}
\numberwithin{figure}{section}
\theoremstyle{plain}
\newtheorem{thm}{Theorem}[section]
\newtheorem{theorem}[thm]{Theorem}
\newtheorem{prop}[thm]{Proposition}
\newtheorem{clm}[thm]{Claim}
\newtheorem{cor}[thm]{Corollary}
\newtheorem{conj}[thm]{Conjecture}
\newtheorem{lemma}[thm]{Lemma}
\newtheorem{question}[thm]{Question}
\theoremstyle{definition}
\newtheorem{definition}[thm]{Definition}
\theoremstyle{remark}
\newtheorem{rem}[thm]{Remark}
\renewcommand{\Pr}{\mathbb{P}}
\newcommand{\Hminusu}{H'}
\def\itm#1{\rm ({#1})}
\def\itmarab#1{\mbox{\itm{{\it #1\,}\arabic{*}\hspace{.05em}}}}
\newcommand{\eps}{\varepsilon}
\DeclarePairedDelimiter{\parens}{(}{)}
\DeclarePairedDelimiter{\set}{\{}{\}}
\DeclarePairedDelimiter{\brackets}{[}{]}
\DeclarePairedDelimiter{\card}{|}{|}
\title{Ramsey simplicity of random graphs}
\author{Simona Boyadzhiyska\thanks{Institut f\"ur Mathematik, Freie Universit\"at Berlin, Berlin, Germany. E-mails: \texttt{s.boyadzhiyska@fu-berlin.de}, \texttt{shagnik@mi.fu-berlin.de}}
\and
Dennis Clemens\thanks{Hamburg University of Technology, Institute of Mathematics, Hamburg, Germany. E-mails: \texttt{dennis.clemens@tuhh.de}, \texttt{pranshu.gupta@tuhh.de}}
\and
Shagnik Das\footnotemark[1]
\and
Pranshu Gupta\footnotemark[2]
}
\begin{document}
\maketitle 

\begin{abstract}
A graph $G$ is \emph{$q$-Ramsey} for another graph $H$ if in any $q$-edge-colouring of $G$ there is a monochromatic copy of $H$, and the classic Ramsey problem asks for the minimum number of vertices in such a graph. This was broadened in the seminal work of Burr, Erd\H{o}s, and Lov\'asz to the investigation of other extremal parameters of Ramsey graphs, including the minimum degree.

It is not hard to see that if $G$ is minimally $q$-Ramsey for $H$ we must have $\delta(G) \ge q(\delta(H) - 1) + 1$, and we say that a graph $H$ is \emph{$q$-Ramsey simple} if this bound can be attained. Grinshpun showed that this is typical of rather sparse graphs, proving that the random graph $G(n,p)$ is almost surely $2$-Ramsey simple when $\frac{\log n}{n} \ll p \ll n^{-2/3}$. In this paper, we explore this question further, asking for which pairs $p = p(n)$ and $q = q(n,p)$ we can expect $G(n,p)$ to be $q$-Ramsey
simple. We resolve the problem for a wide range of values of $p$ and $q$; in particular, we uncover some interesting behaviour when $n^{-2/3} \ll p \ll n^{-1/2}$.

\

\noindent\textbf{Keywords:} Ramsey Theory, random graphs, minimum degree
\end{abstract}

\section{Introduction}\label{sec:intro}
\subsection{Minimum degrees of minimal Ramsey graphs}
We say that a graph $G$ is \emph{$q$-Ramsey} for another graph $H$, and write $G\to_q H$, if, for any $q$-colouring of the edges of $G$, there exists a \emph{monochromatic} copy of $H$, that is, a copy of $H$ whose edges all have the same colour. The fundamental theorem of Ramsey~\cite{ramsey1930formallogic} asserts that $K_n \rightarrow_q H$ for sufficiently large $n$, and hence at least one such graph $G$ exists for any choice of $H$ and $q$. It is then natural to investigate the nature of graphs $G$ that are $q$-Ramsey for a given graph $H$. As a first step in this direction, we can ask how large such a graph $G$ needs to be, leading us to the definition of the most well-studied concept related to Ramsey graphs, the Ramsey number. In this language, the \emph{$q$-colour Ramsey number} of a graph $H$, denoted $r_q(H)$, is defined as the minimum number of vertices in a graph that is $q$-Ramsey for $H$. 
Over the past few decades, this parameter has been studied extensively for various choices of the graph $H$. Arguably the most important case is when $H$ is a complete graph. It was shown by Erd\H{o}s~\cite{erdos1947some} and Erd\H{o}s and Szekeres~\cite{erdos1935combinatorial} that $r_2(K_t)$ is exponential in $t$; more precisely, they proved $2^{t/2}\leq r_2(K_t) \leq 2^{2t}$. Despite considerable effort over the past eighty years, these remain essentially the best known bounds, with improvements only in the lower-order terms; the current best lower bound is due to Spencer~\cite{spencer1975ramsey},
while the best upper bound was recently announced by Sah~\cite{sah2020diagonal} (improving an earlier bound of Conlon~\cite{conlon2009new}).

\medskip

In the 1970s, researchers began exploring other properties of Ramsey graphs, and we shall be interested in the minimum degree, the study of which began with the paper of Burr, Erd\H{o}s, and Lov\'asz~\cite{burr1976graphs}. Of course, since any supergraph of a $q$-Ramsey graph for $H$ is itself $q$-Ramsey for $H$, the question of determining the smallest possible minimum degree among \emph{all} $q$-Ramsey graphs for $H$ is rather uninteresting: we can always add an isolated vertex and make the minimum degree zero. To avoid such trivialities, we restrict our attention to the subcollection $\mathcal{M}_q(H)$ of minimal Ramsey graphs. We say that $G$ is a \emph{minimal $q$-Ramsey} graph for $H$ if $G$ is $q$-Ramsey for $H$ and contains no proper subgraph with this property. In other words, removing any edge or vertex destroys the Ramsey property of the graph. We can then define the parameter $s_q(H)$, introduced in~\cite{burr1976graphs} for $q=2$, as the smallest minimum degree among all minimal $q$-Ramsey graphs for $H$; that is,
\begin{align*}
    s_q(H) = \min \set{\delta(G): G\in \mathcal{M}_q(H)},
\end{align*}
where as usual $\delta(G)$ denotes the minimum degree of $G$.  

\medskip

When studying this parameter, there are a couple of easy general bounds one can give. For an upper bound, observe that since, by definition, $K_{r_q(H)} \rightarrow_q H$, any minimal $q$-Ramsey subgraph of this complete graph bears witness to the fact that $s_q(H) \le r_q(H) - 1$. From below, as observed by Fox and Lin~\cite{fox2007minimum}, a simple argument using the pigeonhole principle shows $s_q(H) \ge q(\delta(H) - 1) + 1$. Note that these bounds are typically very far apart: when $H = K_t$, for instance, the lower bound is linear in $t$ while the upper bound is exponential.

\medskip

In the original paper of Burr, Erd\H{o}s, and Lov\'asz~\cite{burr1976graphs}, the authors showed that $s_2(K_t) = (t-1)^2$, a surprising result for two reasons. First, while the two-colour Ramsey number of $K_t$ is still unknown for any $t \ge 5$, we can determine $s_2(K_t)$ precisely. Second, $s_2(K_t)$ is significantly smaller than $r_2(K_t)$. Informally, this means that a large Ramsey graph for $K_t$ can have a vertex of very low degree whose removal destroys the Ramsey property. 

Since its introduction in~\cite{burr1976graphs}, the parameter $s_q(H)$ has been studied for a number of different choices of $H$ and for larger $q$; see, for example,~\cite{bbl2020,BCG2020a,fox2014ramsey,fox2016minimum,fox2007minimum,grinshpun2017minimum, grinshpun2015some,gw2020triangle,hanrodlszabo2018,szabo2010minimum}. To the best of our knowledge, in all cases studied the value of $s_q(H)$ is far away from the trivial upper bound. On the other hand, the lower bound of Fox and Lin~\cite{fox2007minimum} has been shown to be tight for many graphs. Following Grinshpun~\cite{grinshpun2015some}, we call such a graph \emph{$q$-Ramsey simple}.

\begin{definition}\label{def:Ramsey-simple}\rm
A graph $H$ without isolated vertices is said to be \emph{$q$-Ramsey simple} if
\begin{align*}
    s_q(H) = q(\delta(H) - 1)+1.
\end{align*}
If $H$ has isolated vertices, then we say that $H$ is $q$-Ramsey simple if the graph obtained from $H$ by removing all isolated vertices is $q$-Ramsey simple. 
\end{definition}

Observe that adding isolated vertices to a graph does not affect the structure of the corresponding Ramsey graphs significantly. Indeed, if $H$ is a graph without isolated vertices and $H + tK_1$ is the graph obtained from $H$ by adding $t \ge 0$ isolated vertices, it is not difficult to check that $G \in \mathcal{M}_q(H)$ if and only if $G + sK_1 \in \mathcal{M}_q(H + tK_1)$, where $s = \max \set{0, t- (v(G) - v(H))}$.

\medskip

Previous work by Fox and Lin~\cite{fox2007minimum}, Szab\'o, Zumstein, and Z\"urcher~\cite{szabo2010minimum}, and Grinshpun~\cite{grinshpun2015some} has established the $2$-Ramsey simplicity of a wide range of bipartite graphs. Further results were proven in~\cite{BCG2020a}, including the $q$-Ramsey simplicity of all cycles of length at least four, for any number of colours $q \ge 2$. Based on these results, it is believed that simplicity is a more widespread phenomenon.

\begin{conj}[Szab\'o, Zumstein, and Z\"urcher~\cite{szabo2010minimum}] \label{conj:bipartite}
Every bipartite graph is $2$-Ramsey simple.
\end{conj}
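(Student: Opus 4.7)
The plan is to verify the upper bound $s_2(H) \le 2\delta(H) - 1$ for an arbitrary bipartite graph $H$, since the matching lower bound is the Fox--Lin pigeonhole inequality mentioned above. Write $d = \delta(H)$ and fix a vertex $u \in V(H)$ of degree $d$. The goal is to construct a graph $G$ that is $2$-Ramsey and edge-minimal for $H$, and contains a ``target'' vertex $v$ with $\deg_G(v) = 2d - 1$, while every other vertex of $G$ has degree at least $2d - 1$.

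My approach would follow the signal-sender/gadget paradigm that underlies the existing $s_q$ results for complete graphs and for the families treated in~\cite{BCG2020a,grinshpun2015some,szabo2010minimum}. Start from a minimal $2$-Ramsey graph $G_0$ for $H$; since $H$ is bipartite, a sufficiently large complete bipartite graph is $2$-Ramsey for $H$ and contains such a minimal subgraph, so $G_0$ exists. Glue to $G_0$ a pendant gadget $\Gamma$ centred at a new vertex $v$, where $\Gamma$ contains $v$ together with $2d - 1$ selected neighbours and a collection of positive and negative signal senders attached between pairs of the edges at $v$. The defining feature of a signal sender is that in any $H$-free $2$-edge-colouring it forces its two designated edges to receive the same (positive) or different (negative) colours. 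With a careful design of $\Gamma$, in any $2$-colouring of $G = G_0 \cup \Gamma$ lacking a monochromatic $H$ inside $G_0$, the coloring at $v$ together with the forced gadget structure would exhibit a majority colour on at least $d$ edges at $v$ whose other endpoints realize the link $H - u$ in that colour, producing a monochromatic copy of $H$ through $v$.

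Verification of the two required properties proceeds separately. For the Ramsey property, one takes an arbitrary $2$-colouring of $G$, applies pigeonhole at $v$ to identify a colour class of size at least $d$ among the edges at $v$, and uses the rigidity imposed by the signal senders to complete a monochromatic $H$. For minimality, each edge is analyzed in turn: edges of $G_0$ remain critical by the minimality of $G_0$, provided the gadget does not introduce new copies of $H$; each edge incident to $v$ is critical because its removal drops $\deg(v)$ to $2d - 2$ and one can exhibit an explicit $H$-free colouring of the resulting graph; and edges internal to the senders are critical by the definitions of positive and negative senders. A preliminary step is therefore to show that $\Gamma$ itself is $H$-free and that the gluing to $G_0$ creates no new $H$-copies across the boundary.

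The main obstacle, as in every previous advance on $s_q$, is the construction of suitable signal senders for the target graph $H$. Such senders are known for complete (bipartite) graphs~\cite{grinshpun2015some} and for cycles~\cite{BCG2020a}, but a construction that works for an arbitrary bipartite $H$ is absent from the literature; existing constructions exploit particular structural features (e.g.\ Ramsey-criticality of a small subgraph, or a vertex-transitive automorphism action), and it is unclear how to obtain analogous features from just the bipartiteness of $H$. A reasonable fallback is to first establish the conjecture for structured subfamilies -- bipartite graphs of bounded maximum degree, or graphs whose $\delta$-vertices lie in a single automorphism orbit -- thereby reducing the general conjecture to the existence of signal senders in the remaining cases; this is where I would expect the bulk of the work to lie.
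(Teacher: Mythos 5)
This statement is a \emph{conjecture} in the paper (due to Szab\'o, Zumstein, and Z\"urcher), not a theorem: the paper does not prove it, and it remains open. There is therefore no paper proof to compare your proposal against, and your write-up should be judged purely on whether it resolves the conjecture --- which, as you yourself acknowledge, it does not.

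Your sketch correctly recognises that the signal-sender paradigm is the standard machinery for establishing Ramsey simplicity, and also correctly identifies the decisive obstruction: the R\"odl--Siggers theorem guarantees signal senders only for $3$-connected graphs, and bipartite graphs are in general far from $3$-connected (trees, paths, subdivisions, and many others are only $1$- or $2$-connected). For such graphs signal senders need not exist at all, so the entire gadget framework you describe is unavailable. Indeed, the known partial progress on this conjecture does not go through signal senders: Szab\'o, Zumstein, and Z\"urcher prove simplicity for certain bipartite families (and the paper's Appendix~A reproduces their argument for forests) by a direct, explicit construction of a tripartite host graph $G = X \cup Y \cup Z$ with a pigeonhole/colour-profile argument, with no gadgets in sight. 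So the ``fallback'' you mention at the end --- first handling structured subfamilies --- is in fact closer to the actual state of the art than the signal-sender route you lead with.

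A second, smaller issue: your minimality argument relies on ``the gadget does not introduce new copies of $H$ across the boundary.'' In the paper this kind of statement (Lemma~\ref{lem:safety}) is proved \emph{using} $3$-connectivity of $H$; for low-connectivity bipartite $H$ a copy of $H$ can easily straddle a signal sender through a single cut vertex or a cut pair, so this containment step would genuinely fail, not merely require extra care. In short: your proposal is an honest roadmap of why the conjecture is hard, but it is not a proof, and the key missing ingredient --- a substitute for signal senders for non-$3$-connected bipartite graphs, or a direct construction in the spirit of~\cite{szabo2010minimum} general enough to handle all bipartite $H$ --- is exactly what the open problem asks for.
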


The conjecture suggests that Ramsey simplicity is quite common, but it is natural to wonder whether this extends beyond the bipartite setting, given that we know cliques are not simple. Are cliques an exceptional case, or is $q$-Ramsey simplicity atypical for non-bipartite graphs? In somewhat more precise terms, when can we expect the $n$-vertex binomial random graph $G(n,p)$, where every edge appears independently with probability $p$, to be $q$-Ramsey simple?

\subsection{Random graphs} \label{intro:random.graphs}
Random graphs have long played an important role in Ramsey Theory: Erd\H{o}s's famous exponential lower bound on the Ramsey numbers of complete graphs in~\cite{erdos1947some} came from analysing the clique and independence numbers of random graphs, while a key ingredient in the best modern upper bounds is showing that large Ramsey graphs must be random-like. When it comes to more general Ramsey problems, the work of R\"odl and Ruci\'nski~\cite{rodl1993lower,rodl1995threshold} establishes, for a given graph $H$ and number of colours $q$, the range of values of $p$ for which we have $G(n,p) \rightarrow_q H$ with high probability.

In these seminal papers, which have inspired a great deal of subsequent research, the random graph plays the role of the host graph $G$, while the target graph $H$ is fixed in advance. Surprisingly, there has been considerably less work in the setting where the target graph $H$ is itself random. When $H \sim G(n,p)$, Fox and Sudakov~\cite{fox2009two} and Conlon~\cite{conlon2013ramsey} provide some lower and upper bounds on $r_2(H)$ for different ranges of $p$, while Conlon, Fox, and Sudakov~\cite{conlon2020short} show that $\log r_2(H)$ is well-concentrated.

\medskip

In this paper we shall focus on the minimum degree of Ramsey graphs for the random graph $G(n,p)$, with the goal of determining when it is $q$-Ramsey simple. This line of research was initiated by Grinshpun~\cite{grinshpun2015some}, who proved that sparse random graphs are 2-Ramsey simple with high probability. 

\begin{theorem}[Corollary 2.1.4~in~\cite{grinshpun2015some}]\label{thm:foxetal}
Let $p=p(n)\in(0,1)$ and $H\sim G(n,p)$. If \mbox{$\frac{\log n}{n}\ll p \ll n^{-2/3}$}, then a.a.s.\ $H$ is $2$-Ramsey simple.
\end{theorem}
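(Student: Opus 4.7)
My plan has two parts, one routine and one technical. The lower bound $s_2(H) \geq 2(\delta(H)-1)+1$ is the general inequality of Fox and Lin discussed above and needs nothing about the randomness of $H$. So all of the work lies in exhibiting, a.a.s.\ over the choice of $H \sim G(n,p)$, a minimal $2$-Ramsey graph $G$ for $H$ whose minimum degree is exactly $2(\delta(H)-1)+1$.

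First I would record the almost-sure properties of $H$ that will be needed. For $p \gg \log n/n$, Chernoff bounds give $\delta(H) = (1-o(1))np =: d$ a.a.s. For $p \ll n^{-2/3}$, a first-moment computation yields $\mathbb{E}[\#K_4(H)] = o(1)$, so $H$ is a.a.s.\ $K_4$-free, and in particular the neighbourhood of every vertex is triangle-free. More refined sparsity statements also hold --- bounds on the $2$-density of $H$ and on the edge counts in any small vertex set --- and these ensure that $H$ is locally tree-like. I would then fix a specific minimum-degree vertex $u$ of $H$ with a clean local structure to use as the ``hub'' of the construction.

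Second, I would carry out a signal-sender style construction. The idea is to take many disjoint copies of $H$ and of $H - u$ as building blocks and glue them together at a single root vertex $v^*$ so that $v^*$ ends up with exactly $2d-1$ incident edges; these gadgets are designed so that in any $2$-colouring of $G$ the pigeonhole principle produces $d$ edges at $v^*$ of a single colour, which then must extend to a monochromatic copy of $H$ rooted at $u$ through the matching gadget. Minimality is ensured by checking, for each edge $e$ of $G$, that one can exhibit an explicit $2$-colouring of $G - e$ with no monochromatic copy of $H$; since $v^*$ has exactly $2d-1$ edges, this yields the matching upper bound $s_2(H) \leq 2(\delta(H)-1)+1$.

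The main obstacle, and the reason for the restriction $p \ll n^{-2/3}$, is verifying that the gadgets really do force the desired colour-signalling behaviour. One must rule out ``cross-gadget'' monochromatic copies of $H$ --- embeddings of $H$ into $G$ that use edges from several different copies of $H - u$. This is where the sparsity of $H$ is used: a cross-gadget copy of $H$ would force two copies of $H - u$ to overlap in a substructure denser than the $2$-density of $H$ allows, and $K_4$-freeness together with the local sparsity coming from $p \ll n^{-2/3}$ a.a.s.\ excludes this. Combining this with the explicit edge-deletion colourings completes the proof that $H$ is $2$-Ramsey simple a.a.s.
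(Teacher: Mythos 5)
There is a genuine gap at the heart of your construction: you never supply a mechanism that forces the colours of the gadget edges. Gluing disjoint copies of $H$ and $H-u$ at a root vertex $v^*$ of degree $2\delta(H)-1$ and applying the pigeonhole principle only gives you $\delta(H)$ edges at $v^*$ in one colour; to conclude $G \rightarrow_2 H$ you then need the attached copy of $H-u$ sitting on those particular $\delta(H)$ neighbours to be monochromatic \emph{in that same colour}, and nothing in your construction compels the adversary to colour it that way -- she can simply give every block of the form $H-u$ a mixed colouring and no monochromatic $H$ arises. Ruling out ``cross-gadget'' copies of $H$ via $2$-density and $K_4$-freeness addresses only the safety direction (needed to show that $G$ minus the root, or minus an edge, is \emph{not} Ramsey); it does nothing for the Ramsey direction. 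This colour-forcing is exactly what signal senders (BEL gadgets) provide, and their existence is not something one gets by overlap-counting in a sparse graph: the paper invokes the theorem of R\"odl and Siggers that positive and negative signal senders exist for every $3$-connected $H$ (hence the need to verify, via Lemma~\ref{lem:wellbehaved}, that $G(n,p)$ is a.a.s.\ $3$-connected and otherwise well-behaved in this range), attaches them to a prescribed coloured pattern $\Gamma^+$ built from copies of $H - (\{u\}\cup N(u))$, and uses a safety lemma based on $3$-connectivity -- not sparsity -- to control stray copies of $H$.

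Two further points. First, what you actually need about the hub is that $N_H(u)$ is an \emph{independent} set for the minimum-degree vertex $u$; $K_4$-freeness only gives triangle-free neighbourhoods, and since $u$ is a degree-extremal (hence non-uniform) vertex, the statement $e(H[N(u)])=0$ requires a conditioning/transference argument (Lemma~\ref{lem:gnp_neighbourhood_general} and Corollary~\ref{cor:nhdgraphempty} in the paper, following Grinshpun), not a plain first-moment bound over fixed vertices. Second, your plan to prove minimality of $G$ by exhibiting an $H$-free colouring of $G-e$ for every edge $e$ is not workable -- a constructed Ramsey graph of this kind will have many redundant edges -- and it is also unnecessary: the standard route, used in Proposition~\ref{prop:nhdgraph}, is to show $G \rightarrow_2 H$, $d_G(w)=2(\delta(H)-1)+1$, and $G - w \not\rightarrow_2 H$, and then pass to any minimal Ramsey subgraph, which must contain $w$ and hence has minimum degree at most $d_G(w)$.
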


In this range of edge probabilities the random graph is almost surely not bipartite (in fact, its chromatic number is unbounded), showing that Conjecture~\ref{conj:bipartite} does not tell the full story. Moreover, the argument in~\cite{grinshpun2015some} can easily be extended to provide, for any fixed $q \in \mathbb{N}$, $q$-Ramsey simplicity for $G(n,p)$ in the above range of $p$. This begs two natural questions: what happens when the number of colours $q$ grows with $n$, and what happens in other ranges of the edge probability $p$?

\subsection{Results}

In this paper, we settle this question for a wide range of parameters, but in order to present our results, we need to introduce some notation.

\medskip

We first remark that the parameter $s_q(H)$ and the notion of Ramsey simplicity are not monotone in the graph $H$. As we shall observe in Section~\ref{sec:monotone}, a Ramsey simple graph can have both subgraphs and supergraphs that are themselves not Ramsey simple, while a graph that is not Ramsey simple can have simple subgraphs and supergraphs. However, we do have monotonicity in the number of colours $q$, and we shall demonstrate that $(q+1)$-Ramsey simplicity implies $q$-Ramsey simplicity. Hence, we can ask for a threshold value for $q$, i.e., the largest number $\tilde{q}$ of colours for which a given graph is $\tilde{q}$-Ramsey simple. We set
\[
 \tilde{q}(H) :=
 \sup\{q: \; H \text{ is $q$-Ramsey simple} \}.
\]
Note that every graph is, by definition, $1$-Ramsey simple, since the only minimal $1$-Ramsey graph for $H$ is $H$ itself, and so $s_1(H) = \delta(H)$. Thus, when a graph $H$ is not $q$-Ramsey simple for any number of colours $q \ge 2$, we have $\tilde{q}(H) = 1$. At the other extreme, if $H$ is $q$-Ramsey simple for any number of colours $q$, we have $\tilde{q}(H) = \infty$.

\medskip

Given this notation, we can now state our main theorem, which collects various bounds we were able to prove for the threshold $\tilde{q}(H)$ when $H \sim G(n,p)$.

\begin{theorem}\label{thm:main}
Let $p = p(n) \in (0,1)$ and $H\sim G(n,p)$. Let $u \in V(H)$ be a vertex of minimum degree $\delta(H)$ and let $F = H[N(u)]$ be the subgraph of $H$ induced by the neighbourhood of $u$. Denote by $\lambda(F)$ the order of the largest connected component in $F$. Then a.a.s.\ the following bounds hold:
\begin{enumerate}[leftmargin=40pt,label = (\alph*)]
    \item\label{thm:main:forest} $\tilde{q}(H)=\infty$ 
       \hfill if $0<p\ll n^{-1}$.
    \item\label{thm:main:alwayssimple} $\tilde{q}(H)=\infty$ 
       \hfill if $\frac{\log n}{n}\ll p \ll n^{-\frac{2}{3}}$.
    \item\label{thm:main:intermediatelowerbounds} $\tilde{q}(H) \ge (1 + o(1)) \max \set*{ \frac{\delta(H)}{\lambda(F)^2}, \frac{\delta(H)}{80 \log n} }$ \hfill if $n^{-\frac23} \ll p \ll n^{-\frac12}$.
    \item\label{thm:main:intermediateupperbounds} $\tilde{q}(H) \le (1 + o(1)) \min \set*{ \frac{\delta(H)}{\Delta(F)}, \frac{\delta(H)^2}{2 e(F)}}$ \hfill if $n^{-\frac23} \ll p \ll 1$.
    \item\label{thm:main:neversimple} $\tilde{q}(H)=1$ 
        \hfill if $\left(\frac{\log n}{n}\right)^{1/2}\ll p < 1$.
\end{enumerate}
\end{theorem}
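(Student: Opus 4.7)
Theorem~\ref{thm:main} is really a compilation of bounds across five regimes of $p$, and I would treat each part in its own dedicated section. For parts~\ref{thm:main:forest} and~\ref{thm:main:alwayssimple}, the key observation is that the local structure around a minimum-degree vertex $u$ of $H$ is very simple a.a.s. When $p \ll n^{-1}$, $H$ is a.a.s.\ a forest (the expected number of cycles is $o(1)$), so after removing isolated vertices it has a leaf and $\delta = 1$; when $\tfrac{\log n}{n} \ll p \ll n^{-2/3}$, the neighbourhood of $u$ is a.a.s.\ an independent set, because the expected number of triangles through any vertex is $\Theta(n^2 p^3) = o(1)$. In both cases I would reduce $q$-Ramsey simplicity to a construction with independent neighbourhood and adapt the gadgetry underlying Grinshpun's Theorem~\ref{thm:foxetal} (which gives the $q = 2$ case) to arbitrary $q$ using signal-sender techniques from the Ramsey minimality literature. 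This adaptation is essentially mechanical since no quantitative dependence on $q$ enters.

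The heart of the theorem is the intermediate regime $n^{-2/3} \ll p \ll n^{-1/2}$, where parts~\ref{thm:main:intermediatelowerbounds} and~\ref{thm:main:intermediateupperbounds} give nearly matching bounds on $\tilde q(H)$. Here $F = H[N(u)]$ acquires edges, but a subcritical branching argument shows that a.a.s.\ its components have size $\lambda(F) = O(\log n)$. For the lower bound in~\ref{thm:main:intermediatelowerbounds}, I would extend the construction so that it tolerates the small components of $F$ in the neighbourhood of the distinguished low-degree vertex: if this neighbourhood splits across $q$ colour classes, each class receives only $\delta(H)/q$ of the incident edges, and for simplicity we need each colour class to be able to accommodate a copy of $F$. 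Fitting a largest component into each class requires $q \lesssim \delta(H)/\lambda(F)^2$, and the bound $\delta(H)/(80 \log n)$ arises from Chernoff-type concentration when balancing colours randomly.

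For the matching upper bound in~\ref{thm:main:intermediateupperbounds}, I would reverse the direction and argue structurally. Take any minimal $q$-Ramsey graph $G$ with a low-degree vertex $v$. By minimality, for each edge $e$ at $v$ there is a $q$-colouring of $G - e$ without a monochromatic copy of $H$, and the obstructions come from the embeddings $u \mapsto v$. A careful count of how such embeddings use edges of $F$ gives two competing bounds: focussing on the highest-degree vertex of $F$ and pigeonholing its image yields $\tilde q(H) \le (1+o(1))\delta(H)/\Delta(F)$, while a Cauchy-Schwarz / double-counting argument over the edges of $F$ gives $\tilde q(H) \le (1+o(1))\delta(H)^2/(2 e(F))$. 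The main technical obstacle is confirming that the local failure near $v$ really certifies a monochromatic copy of the \emph{entire} $H$, which requires enough freedom in the rest of $G$; this is where a.a.s.\ properties of the random $H$ (such as the structure of $F$ and the anti-Ramsey behaviour of $H - u$) will be invoked.

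Finally, part~\ref{thm:main:neversimple} asserts that $2$-Ramsey simplicity fails once $p \gg (\log n/n)^{1/2}$. In this dense regime $F$ is a.a.s.\ so rich that any $2$-colouring of the edges at a minimum-degree vertex $v$ in a Ramsey host must leave one colour class containing many neighbours, and the $F$-structure restricted to that class must itself still contain a copy of $F$, forcing a monochromatic $H$. Concretely, I would prove that $F$ a.a.s.\ has a self-reproducing Ramsey property (for instance, $F$ contains a copy of itself in every large subgraph of either colour class), and then apply the counting argument from~\ref{thm:main:intermediateupperbounds} at $q = 2$ with explicit constants. Bridging this local $F$-behaviour to a global monochromatic embedding of $H$—so that the counting really produces a copy of $H$ and not merely its local neighbourhood structure—is the step I expect to be the most delicate across parts~\ref{thm:main:intermediateupperbounds} and~\ref{thm:main:neversimple}.
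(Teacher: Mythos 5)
Your overall plan for parts~\ref{thm:main:alwayssimple}--\ref{thm:main:intermediateupperbounds} is broadly the paper's route (reduce everything to a $q$-coloured graph on $q(\delta(H)-1)+1$ vertices playing the role of the neighbourhood of the low-degree vertex, build it via a clique-partition/affine-plane construction and via a random colouring for the two lower bounds, and use counting in that neighbourhood for the upper bounds), but two of your concrete plans would fail. The more serious one is part~\ref{thm:main:neversimple}: you propose to ``apply the counting argument from~\ref{thm:main:intermediateupperbounds} at $q=2$'', but in the regime $p \gg (\log n/n)^{1/2}$ one has $\delta(H)=(1+o(1))np$, $\Delta(F)=\Theta(np^2)$ and $e(F)=\Theta(n^2p^3)$, so both bounds of~\ref{thm:main:intermediateupperbounds} are of order $1/p$, which tends to infinity for any $p=o(1)$ in this range; they cannot rule out $q=2$. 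Moreover your heuristic is pointed the wrong way: to prove non-simplicity you must show that a hypothetical minimal $2$-Ramsey graph $G$ with a vertex $w$ of degree $2\delta(H)-1$ admits an $H$-free \emph{extension} of an $H$-free colouring of $G-w$ (or that the necessary local colour structure cannot exist); arguing that the richness of $F$ ``forces a monochromatic $H$'' does not do this. The paper instead shows that a.a.s.\ every edge of $H$ lies in a triangle (this is exactly where $p \gg (\log n/n)^{1/2}$ enters) and then extends the colouring explicitly: fix $v\in N(w)$, find by pigeonhole $\delta(H)-1$ further neighbours $W$ such that all existing edges from $v$ to $W$ share one colour, give the edges from $w$ to $W\cup\{v\}$ the other colour and the remaining edges at $w$ the first colour; then $w$ has too few edges of the first colour, while any copy of $H$ in the second colour would have to use the edge $wv$, which lies in no triangle of that colour.

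There is also a gap in part~\ref{thm:main:forest}: for $p\ll n^{-1}$ the graph $H$ is a.a.s.\ a forest, hence not $3$-connected, and the signal senders you invoke (R\"odl--Siggers) are only available for $3$-connected target graphs, so the ``mechanical adaptation of the gadgetry'' does not apply there; the paper handles forests by a separate explicit construction in the style of Szab\'o--Zumstein--Z\"urcher showing $s_q(\text{forest})=1$ directly. Two smaller remarks: in part~\ref{thm:main:intermediateupperbounds} the ``delicate bridging from local $F$-structure to a global copy of $H$'' you flag is not actually needed --- once $G\to_q H$ and $w$ has degree $q(\delta(H)-1)+1$, colouring the edges from $w$ to a chosen $\delta(H)$-set in colour $i$ and splitting the rest evenly forces a colour-$i$ copy of $H$ in which $w$ plays the role of $u$, so the colour-$i$ copy of $F$ in that set comes for free; the genuine extra ingredient for the $\delta(H)/\Delta(F)$ bound is a Tur\'an-type theorem (Kogan) producing a large vertex set inducing maximum degree below $\Delta(F)$ in the sparsest colour class, which your pigeonhole on the image of the maximum-degree vertex of $F$ does not obviously replace. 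Conversely, the local-to-global care is needed in the \emph{lower} bounds, where the paper's ``well-behaved'' properties (unique minimum-degree vertex, codegree at most $\tfrac12\delta(H)$, $3$-connectivity, no medium-size components after deleting $\delta(H)$ vertices) are what make the signal-sender construction safe; your sketch should acknowledge and verify these.
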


As shown above, we extend Theorem~\ref{thm:foxetal} by showing that these sparse random graphs are not just $q$-Ramsey simple for any fixed $q$, but even when the number of colours $q$ is allowed to grow with $n$. On the other hand, we prove that much denser random graphs are not simple for any number of colours $q \ge 2$. Thus, both extremes are observed for different edge probabilities. Most interestingly, though, the simplicity threshold for random graphs of intermediate density depends on some parameters of the random graph itself --- these graphs are $q$-Ramsey simple for small values of $q$, but not when $q$ grows too large.

\begin{rem}
As suggested by the above bounds, this dependence on $q$ is governed by the subgraph $F$, and it is the appearance of edges in $F$ that gives rise to a finite bound on $\tilde{q}(H)$. When $p \ll n^{-\frac23}$, then $F$ almost surely has no edges, while if $p \gg n^{-\frac23}$, then $F$ almost surely does, explaining the distinction between cases~\ref{thm:main:alwayssimple} and~\ref{thm:main:intermediateupperbounds}. When $p = \Theta \parens*{n^{-\frac23}}$, then $F$ is empty (and $\tilde{q}(H)$ thus infinite) with probability bounded away from $0$ and $1$.
\end{rem}

When $n^{-\frac23} \ll p \ll \parens*{\frac{\log n}{n}}^{\frac12}$, by analysing the structure of random graphs, we can give quantitative estimates for the bounds on $\tilde{q}(H)$ in this intermediate range.

\begin{cor}\label{cor:bounds}
Let $k \ge 2$ be a fixed integer and let $f = f(n)$ satisfy $1 \ll f = n^{o(1)}$. Let $p = p(n)$ satisfy $n^{-\frac23} \ll p \ll \left( \frac{\log n}{n} \right)^{\frac12}$ and let $H \sim G(n,p)$. Then a.a.s.\ the following bounds hold:
\begin{enumerate}[leftmargin=40pt,label = (\alph*)]
    \item\label{cor:bounds:constanttreecomponents} if $n^{-\frac{k}{2k-1}}\ll p \ll n^{-\frac{k+1}{2k+1}}$, then $(1+o(1)) \frac{np}{k^2} \le \tilde{q}(H) \le (1+o(1))\frac{np}{k-1}$.
    \item \label{cor:bounds:boundary} if $p = \Theta\parens*{ n^{-\frac{k+1}{2k+1}} }$, then $(1 + o(1)) \frac{np}{(k+1)^2} \le \tilde{q}(H) \le (1 + o(1))\frac{np}{k-1}.$
    \item\label{cor:bounds:smalldegrees} if $p = n^{-\frac12} f^{-1}$, then $(1+ o(1))\frac{np}{\log n} \max \set*{\frac{16 \log^2 f}{\log n}, \frac{1}{80}} \le \tilde{q}(H) \le (2+ o(1)) \frac{np \log ( f^2 \log n)}{\log n}$.
    \item\label{cor:bounds:manyedges} if $n^{-\frac12} \ll p \ll \left( \frac{\log n}{n} \right)^{\frac12}$, then $1 \le \tilde{q}(H) \le (8+ o(1))\frac{1}{p}$.
\end{enumerate}
\end{cor}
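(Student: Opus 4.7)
The plan is to derive Corollary 1.4 from Theorem 1.3 parts (c) and (d) by estimating the parameters $\delta(H)$, $\lambda(F)$, $\Delta(F)$, and $e(F)$ for $H \sim G(n,p)$. Throughout the ranges of $p$ considered, $np \gg \log n$, so standard Chernoff and union-bound arguments yield $\delta(H) = (1+o(1))np$ a.a.s. Letting $u$ be a minimum-degree vertex, the edges of $F = H[N(u)]$ are independent of those incident to $u$, so essentially $F \sim G(m,p)$ with $m = (1+o(1))np$, and the task reduces to analysing this binomial random graph.

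Parts (a) and (b) follow from elementary first-moment calculations. The expected number of $\ell$-vertex trees in $G(m,p)$ is of order $n^\ell p^{2\ell-1}$, while the expected number of copies of $K_{1,j}$ is of order $n^{j+1} p^{2j+1}$. In the range $n^{-k/(2k-1)} \ll p \ll n^{-(k+1)/(2k+1)}$ of part (a), Markov rules out all $(k+1)$-vertex connected subgraphs---trees dominate since each additional edge in a connected component carries an extra factor of $p$---while a second-moment calculation guarantees a copy of $K_{1,k-1}$. Hence $\lambda(F) \le k$ and $\Delta(F) \ge k-1$ a.a.s., and Theorem 1.3(c),(d) yields (a). Part (b) is analogous: at $p = \Theta(n^{-(k+1)/(2k+1)})$ the expected number of $(k+2)$-vertex trees is $\Theta(n^{-1/(2k+1)}) = o(1)$, so $\lambda(F) \le k+1$, while $(k-1)$-stars persist.

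For part (c) with $p = n^{-1/2} f^{-1}$, we have $m = n^{1/2}/f$ and $mp = 1/f^2 \to 0$. Via Stirling, the expected number of $\ell$-vertex trees in $F$ is of order $(e/f^2)^\ell \cdot n^{1/2} f \cdot \ell^{-5/2}$, which drops below $1$ once $\ell \ge (1+o(1))\log n/(4 \log f)$, giving $\lambda(F) \le (1+o(1))\log n/(4 \log f)$. The expected number of vertices of $F$-degree at least $j$ is of order $n^{1/2}/(f^{2j+1} j!)$, and solving the balancing equation $j \log j + (2j+1) \log f \sim (1/2)\log n$ yields the threshold $j^\ast = (1+o(1))\log n/(2\log(f^2 \log n))$; a second-moment argument then gives $\Delta(F) \ge (1-o(1))j^\ast$. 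Substituting into Theorem 1.3(c),(d), and taking the max with the $\delta(H)/(80 \log n)$ fallback from Theorem 1.3(c), gives the stated bounds. For part (d), $mp = np^2 \gg 1$, so $e(F) = (1+o(1))(np)^2 p/2$ by concentration, and Theorem 1.3(d) gives $\tilde q(H) \le (1+o(1))/p \le (8+o(1))/p$; the lower bound $\tilde q(H) \ge 1$ is trivial.

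The principal obstacle is the sharp analysis in part (c): determining $j^\ast$ requires solving a transcendental relation mixing $\log j$, $\log f$, and $\log \log n$, and the accompanying second-moment bound must control correlations between pairs of candidate high-degree vertices that share common neighbours in $F$. A further technicality is justifying that $F$ behaves like $G(m,p)$ despite $u$ being conditioned to be a minimum-degree vertex, but this is standard and can be handled by a symmetry argument coupled with the concentration of $\delta(H)$.
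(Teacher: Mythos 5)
Your overall route is the same as the paper's: deduce the corollary from Theorem~\ref{thm:main}\ref{thm:main:intermediatelowerbounds} and~\ref{thm:main:intermediateupperbounds} by pinning down $\delta(H)$, $\lambda(F)$, $\Delta(F)$, and $e(F)$, and your first/second-moment estimates for a binomial random graph on $m=(1+o(1))np$ vertices reproduce exactly the bounds the paper records in Corollaries~\ref{cor:nhdgraphsparse} and~\ref{cor:nhdgraphsmalltrees} (in part~\ref{cor:bounds:manyedges} your constant is even slightly sharper than required). The substitutions into the two bounds of Theorem~\ref{thm:main} are also correct in all four regimes.

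The genuine gap is the step you defer as ``standard'': justifying that $F=H[N(u)]$, with $u$ the minimum-degree vertex, inherits the a.a.s.\ properties of $G(m,p)$. A symmetry argument plus concentration of $\delta(H)$ does not do this. The event that $u$ is the unique vertex of minimum degree is correlated with the edges inside $N(u)$: conditioned on it, every neighbour of $u$ must have degree at least $\delta(H)$, which biases the edge distribution inside $N(u)$, and the bias works \emph{against} you precisely for the properties you need (absence of large tree components, which drives the lower bounds on $\tilde{q}(H)$), so no monotonicity shortcut is available. Quantitatively, if a property of $G(s,p)$ fails with probability $\pi_s=o(1)$, then after conditioning the failure probability is only bounded by $\pi_s/\Pr(C_S)$, where $C_S$ is the event that all vertices of $S=N(u)$ have degree at least $d$ in $H-u$ and all others at least $d+1$; $\Pr(C_S)$ is not bounded below by a constant. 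The paper's Lemma~\ref{lem:gnp_neighbourhood_general} handles exactly this, and its proof is not routine: it uses anti-concentration of the minimum degree ($\Pr(\delta(H)=d)=o(1)$ for every $d$), a comparison between $\delta(H-u)$ and $\delta(H)$ taken from Grinshpun's argument, and a careful choice of thresholds $t_n,\eps_n$ guaranteeing $\Pr(C_S)\ge\eps_n/2\gg\pi_s$, together with requiring the property for every plausible degree $s\in[0.5np,2np]$. So your proposal is correct modulo this transference lemma, but as sketched (``symmetry coupled with concentration of $\delta(H)$'') that step would not go through, and it is where a substantial part of the paper's work lies.
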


Corollary~\ref{cor:bounds} shows that, for fixed $\varepsilon > 0$ and $n^{-\frac23} \ll p \ll n^{-\frac12 - \varepsilon}$, we determine the threshold up to a constant factor, while for $n^{-\frac12 - o(1)}$, we know it up to a polylogarithmic factor. Most surprisingly, these bounds reveal that the threshold $\tilde{q}(H)$ evolves in a complicated fashion: while it drops from $\infty$ to $1$ as $p$ ranges from $\frac{\log n}{n}$ to $1$, it does not do so in a monotone fashion, as it must increase in the ranges $p \in \left( n^{-\frac{k}{2k-1}}, n^{-\frac{k+1}{2k+1}} \right)$ for each fixed $k$. These results are illustrated in Figure~\ref{fig:simplicity-graph}.

\begin{figure}[!ht]
    \centering
    \includegraphics[scale=0.8,page=2]{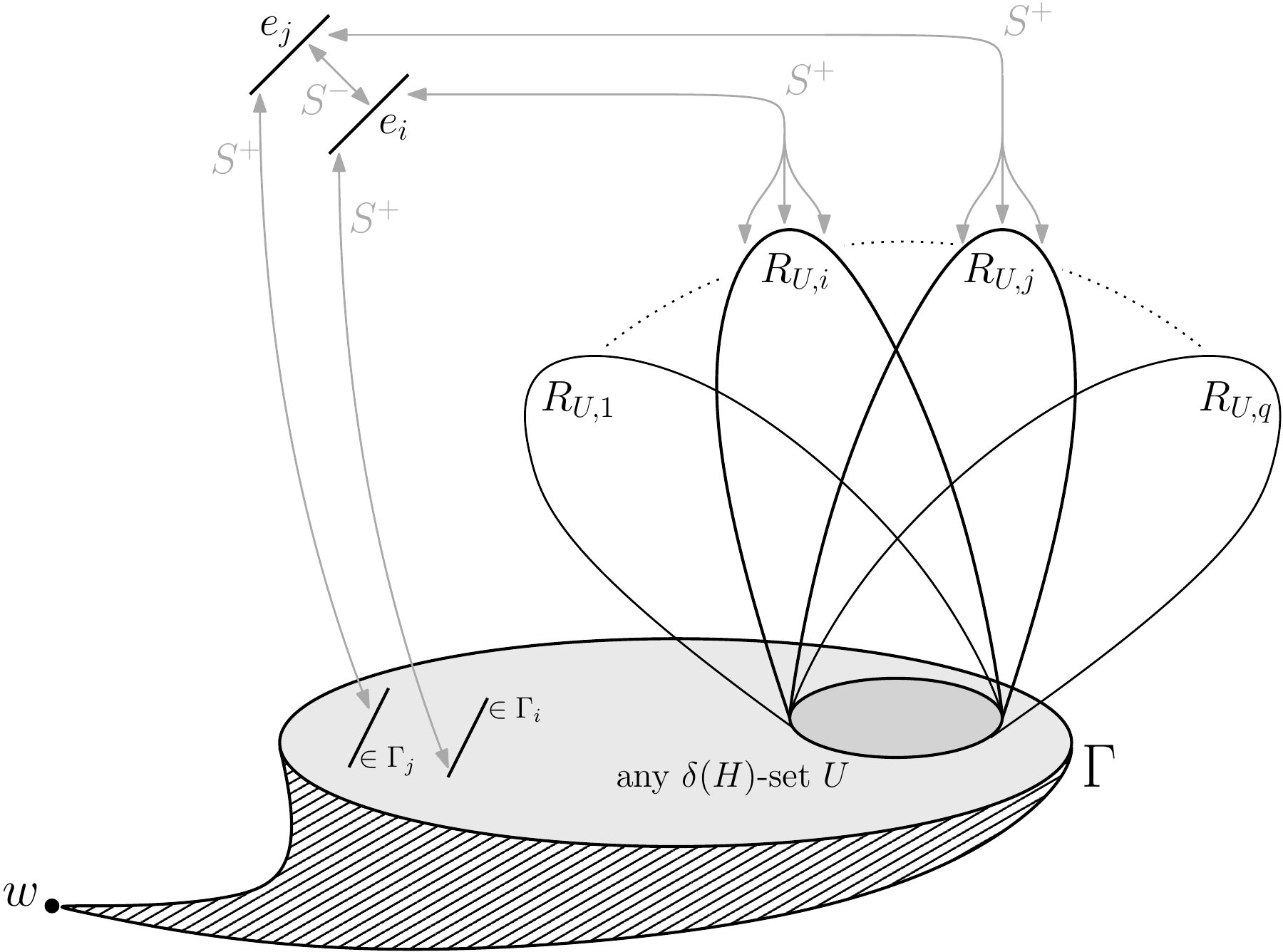}
    \caption{Bounds on the simplicity threshold $\tilde{q}(G(n,p))$}
    \label{fig:simplicity-graph}
\end{figure}

\subsection{Organisation of the paper}
In Section~\ref{sec:monotone} we discuss the monotonicity of Ramsey simplicity, and shall in particular justify the definition of the threshold $\tilde{q}(H)$. We then turn to random graphs, and in Section~\ref{sec:gnpproperties} collect properties of random graphs needed to derive Corollary~\ref{cor:bounds} from Theorem~\ref{thm:main}. The constructions of Ramsey graphs that establish the lower bounds in Theorem~\ref{thm:main} are provided in Section~\ref{sec:simplicity}, while the upper bounds on $\tilde{q}(H)$ are proven in Section~\ref{sec:non-simplicity}. The final section, Section~\ref{sec:concluding}, is devoted to concluding remarks and open problems.

\subsection{Notation}
The notation used in this paper is mostly standard, except that for a graph $G$, we write $\lambda(G)$ for the order of the largest connected component in $G$. Throughout the paper, a $q$-colouring is an edge-colouring of a given graph with $q$ colours and, unless otherwise specified, we will take $[q]$ to be our colour palette.

\section{Monotonicity in $q$}\label{sec:monotone}
In this section we will prove that the property of being $q$-Ramsey simple is monotone decreasing in the number of colours; that is, we will show that if a graph is not $q$-Ramsey simple for some $q$, then it cannot be $q'$-Ramsey simple for any $q'\geq q$.

\begin{lemma}\label{lem:qmonotonicity}
If $H$ is not $q$-Ramsey simple, then $H$ is not $(q+1)$-Ramsey simple.
\end{lemma}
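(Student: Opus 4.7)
The plan is to prove the contrapositive: starting from a witness $G^* \in \mathcal{M}_{q+1}(H)$ with $\delta(G^*) = (q+1)(\delta(H)-1)+1$, I will construct a graph $G$ certifying $q$-Ramsey simplicity of $H$. Fix a vertex $u$ of minimum degree in $G^*$. The strategy is to delete a carefully chosen set of edges from $G^*$ so that the resulting graph $G$ is $q$-Ramsey for $H$, so that $\deg_G(u) = q(\delta(H)-1)+1$, and so that $G - u \not\to_q H$; the last property forces every minimal $q$-Ramsey subgraph of $G$ to contain $u$, which combined with the Fox and Lin lower bound $s_q(H) \ge q(\delta(H)-1)+1$ will pin its minimum degree to the desired value.

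Since $G^*$ is minimal $(q+1)$-Ramsey, $G^* - u \not\to_{q+1} H$, so I can fix a $(q+1)$-colouring $c$ of $G^* - u$ with no monochromatic $H$. Let $D := c^{-1}(q+1)$ and let $F$ be an arbitrary set of $\delta(H) - 1$ edges of $G^*$ incident to $u$. Define $G := G^* - D - F$. By construction $\deg_G(u) = q(\delta(H)-1) + 1$, and since the edges of $F$ are all incident to $u$ we have $G - u = (G^* - u) - D$; restricting $c$ to edges of colours in $[q]$ therefore gives a $q$-colouring of $G - u$ with no monochromatic $H$, so $G - u \not\to_q H$.

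To show $G \to_q H$, take any $q$-colouring $c'$ of $G$ and extend it to $G^*$ by assigning colour $q+1$ to every edge of $D \cup F$. The extension contains a monochromatic copy of $H$ by hypothesis; copies of colour in $[q]$ lie entirely in $G$ and yield a monochromatic $H$ in $c'$, so it suffices to rule out a copy of colour $q+1$, that is, a copy of $H$ inside the graph $D \cup F$. Such a copy cannot avoid $u$, because $D$ is $H$-free (as $c$ has no monochromatic $H$), and it cannot pass through $u$, because every vertex of a copy of $H$ is incident to at least $\delta(H)$ of its edges, whereas $D$ contributes no edges at $u$ and $F$ contributes only $\delta(H) - 1$. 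Hence $G \to_q H$.

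Finally, any $G_0 \in \mathcal{M}_q(H)$ with $G_0 \subseteq G$ must satisfy $u \in V(G_0)$, since otherwise $G_0 \subseteq G - u$ would force $G - u \to_q H$. This gives $\delta(G_0) \le \deg_{G_0}(u) \le q(\delta(H)-1)+1$, and the Fox and Lin bound yields equality, showing that $H$ is $q$-Ramsey simple. I expect the main subtlety to be the verification that $D \cup F$ contains no copy of $H$: this combinatorial observation is the heart of the proof, and it is precisely what dictates the choice $|F| = \delta(H) - 1$, exploiting the ``$+1$'' slack in the Fox and Lin lower bound.
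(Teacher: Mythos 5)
Your proof is correct and, at its core, uses the same idea as the paper's: exploit the minimality of the $(q+1)$-Ramsey witness to obtain an $H$-free $(q+1)$-colouring, delete the colour-$(q+1)$ edges plus a few edges at the low-degree vertex, and use the degree bound $\delta(H)$ to rule out monochromatic copies of $H$ in the deleted edges. The paper phrases it as a contradiction starting from $s_q(H) > q(\delta(H)-1)+1$ and deletes a single edge $e$ before colouring, whereas you delete the vertex $u$ up front and directly construct the graph $G$ certifying $q$-Ramsey simplicity; these are cosmetic differences and both formulations are sound.
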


Note that $q$-Ramsey simplicity does not observe any monotonicity with respect to the graph $H$. Indeed, we know that any tree on $t$ vertices is $2$-Ramsey simple, whereas the clique $K_t$ is not. Similarly, there exist graphs that are $q$-Ramsey simple but contain subgraphs that are not. For instance, Theorem~2.1.3 in~\cite{grinshpun2015some}  shows that any 3-connected graph $H$ containing a vertex $v$ of minimum degree such that $N(v)$ is contained in an independent set of size $2\delta(H)-1$ is 2-Ramsey simple. 
Hence, while $K_{\delta}$ for $\delta\geq 3$ is not $2$-Ramsey simple,
the following supergraph of it is: add $2\delta-1$ new vertices to $K_\delta$ with a complete bipartite graph connecting them to the clique, and then add another vertex $v$ connected to exactly $\delta$ of the  $2\delta-1$ new vertices.

\begin{proof}[Proof of Lemma~\ref{lem:qmonotonicity}]
Assume $H$ is not $q$-Ramsey simple, that is, $s_q(H) > q(\delta(H)-1)+1$. 
Suppose for a contradiction that there exists a graph $G\in \mathcal{M}_{q+1}(H)$ such that $G$ contains a vertex $v$ of degree $(q+1)(\delta(H)-1)+1$. Let $e$ be an arbitrary edge incident to $v$.

By the minimality of $G$, we know that the graph $G-e$ has an $H$-free $(q+1)$-colouring $c$. Now, if there are at most $\delta(H)-2$ edges that are incident to $v$ and have colour $q+1$ under $c$, then we can give $e$ colour $q+1$ to obtain an $H$-free $(q+1)$-colouring of $G$, contradicting $G \in \mathcal{M}_{q+1}(H)$. Hence we may assume that there are at least $\delta(H)-1$ edges incident to $v$ that have colour $q+1$. Let $G_0$ be the subgraph of $G$ containing all edges that have colours in $[q]$ under $c$ together with the edge $e$, i.e., $G_0 = G - c^{-1}(q+1)$. We then know that $d_{G_0}(v) \leq (q+1)(\delta(H)-1)+1 - (\delta(H)-1) = q(\delta(H)-1)+1 < s_q(H)$. If $G_0$ is not $q$-Ramsey for $H$, then $G_0$ has an $H$-free $q$-colouring $c'$, and extending $c'$ to the graph $G$ by colouring the edges in $E(G)\setminus E(G_0)$ with colour $q+1$ gives an $H$-free $(q+1)$-colouring of $G$, a contradiction. Therefore, $G_0\rightarrow_q H$. 

But $d_{G_0}(v) < s_q(H)$, so $G_0$ cannot be minimal $q$-Ramsey for $H$, and in particular, the vertex $v$ cannot be part of a minimal $q$-Ramsey subgraph of $G_0$. Thus $G_0-v\rightarrow_q H$. But the restriction of $c$ to $G_0-v$ is $H$-free by our choice of $c$, which again leads to a contradiction.

Hence, $H$ cannot be $(q+1)$-Ramsey simple.
\end{proof}

\section{Properties of $G(n,p)$} \label{sec:gnpproperties}

In this section we shall establish various properties of the random graph $G(n,p)$ needed for the proof of Theorem~\ref{thm:main} and the deduction of Corollary~\ref{cor:bounds}.

\subsection{Facts about $G(n,p)$}\label{sec:gnpfacts}

We start with some bounds on the degrees and edge distribution in the random graph, for which we require the following well-known concentration bounds due to Chernoff (see~\cite[Theorem 2.3]{mcdiarmid1998concentration} and~\cite[Theorem 22.6]{frieze2016introduction}).

\begin{lemma} \label{lem:Chernoff}
Let $X \sim Bin(n,p)$ and $\mu = \mathbb{E}[X]$.
\begin{enumerate}[leftmargin=40pt,label = (\alph*)]
    \item If $0<\varepsilon<1$, then $\mathbb{P}(X\geq (1+\varepsilon)\mu)\leq \exp{(\frac{-\mu \varepsilon^2}{3})}$ and $\mathbb{P}(X\leq (1-\varepsilon)\mu)\leq \exp{(\frac{-\mu \varepsilon^2}{2})}$. \label{Chernoff1}
    \item For all $t \geq 7\mu$ we have $\mathbb{P}(X \geq t) \leq \exp(-t)$. \label{Chernoff3}
\end{enumerate}
\end{lemma}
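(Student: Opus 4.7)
The plan is to prove this via the standard moment generating function (Chernoff) method. Write $X = \sum_{i=1}^n X_i$ with $X_i$ i.i.d.\ Bernoulli$(p)$. For any $s > 0$, Markov's inequality applied to $e^{sX}$ gives $\Pr(X \ge a) \le e^{-sa}\mathbb{E}[e^{sX}]$, and by independence together with $1+x \le e^x$ we obtain the master bound
\[
\mathbb{E}[e^{sX}] = (1 - p + pe^s)^n \le e^{\mu(e^s - 1)}.
\]
Everything then reduces to a judicious choice of $s$ depending on the regime.

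For part \ref{Chernoff1}, setting $a = (1+\varepsilon)\mu$ and choosing $s = \ln(1+\varepsilon)$ yields the classical form $\Pr(X \ge (1+\varepsilon)\mu) \le \bigl(e^\varepsilon / (1+\varepsilon)^{1+\varepsilon}\bigr)^\mu$. The claimed bound then follows from the elementary inequality $(1+\varepsilon)\ln(1+\varepsilon) - \varepsilon \ge \varepsilon^2/3$ for $\varepsilon \in (0,1)$, which one verifies by a Taylor expansion of the left-hand side around $0$. The lower tail is handled symmetrically: repeat the argument with $s < 0$ (equivalently, apply the MGF method to $-X$), leading to $\Pr(X \le (1-\varepsilon)\mu) \le \bigl(e^{-\varepsilon}/(1-\varepsilon)^{1-\varepsilon}\bigr)^\mu$, and finish using the slightly sharper inequality $(1-\varepsilon)\ln(1-\varepsilon) + \varepsilon \ge \varepsilon^2/2$ on $(0,1)$.

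For part \ref{Chernoff3}, rather than optimising $s$, we simply pick a fixed large value, say $s = 2$. The master bound gives $\Pr(X \ge t) \le \exp\bigl(-2t + \mu(e^2 - 1)\bigr)$. Since $e^2 - 1 < 7$, the hypothesis $t \ge 7\mu$ implies $\mu(e^2 - 1) \le t$, and rearranging yields $\Pr(X \ge t) \le e^{-t}$, as required.

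The only mildly delicate step in this scheme is pinning down the explicit constants $1/3$, $1/2$, and $7$; these emerge from short but fiddly comparisons of elementary functions, which is precisely why the authors opt to cite the textbook treatments in~\cite{mcdiarmid1998concentration, frieze2016introduction} rather than reproduce the computations. Conceptually there is no real obstacle: the method is completely mechanical once the MGF bound $\mathbb{E}[e^{sX}] \le e^{\mu(e^s-1)}$ is in hand.
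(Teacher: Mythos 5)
Your argument is correct and is precisely the standard moment-generating-function proof that the cited sources (McDiarmid's survey and Frieze--Karo\'nski) give; the paper itself does not prove the lemma but simply references those texts. The elementary inequalities you invoke, $(1+\varepsilon)\ln(1+\varepsilon) - \varepsilon \ge \varepsilon^2/3$ and $(1-\varepsilon)\ln(1-\varepsilon) + \varepsilon \ge \varepsilon^2/2$ on $(0,1)$, and the numerical check $e^2 - 1 < 7$ for part (b), all hold, so the constants come out as claimed.
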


With these concentration results, we can specify how many edges the random graph is likely to have. This is done in the following lemmas, which collect some folklore bounds on the degrees and number of edges in $G(n,p)$. We start by controlling the degrees.

\begin{lemma}[Degrees in $G(n,p)$]\label{lem:gnp_degrees}
Let $p=p(n)\in (0,1)$, and let $H\sim G(n,p)$. Then a.a.s.~the following bounds on the maximum degree hold:
\begin{enumerate}[leftmargin=40pt,label = (\alph*)]
    \item\label{lem:gnp_degree_max1} for any fixed integer $k \ge 2$, we have $\Delta(H)\geq k-1$ when $p\gg n^{-\frac{k}{k-1}}$, and
    \item\label{lem:gnp_degree_max2} for any $f = f(n)$ satisfying $1 \ll f = n^{o(1)}$, we have $\Delta(H) \geq \frac{\log n}{\log (f \log n)}$ when $p = \frac{1}{nf}$.
\end{enumerate}
Moreover, if $p\gg \frac{\log n}{n}$,
then with probability at least $1-n^{-2}$ we have
\begin{enumerate}[leftmargin=40pt, start=3, label = (\alph*)]
    \item\label{lem:gnp_degree_concentration} $d_H(v)=(1\pm o(1))np$ for every $v\in V(H)$.
\end{enumerate}
\end{lemma}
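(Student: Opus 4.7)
The plan is to handle the three parts separately. Part (c) follows from a direct Chernoff bound combined with a union bound, while parts (a) and (b) use the second moment method (or Janson's inequality) applied to a count of rooted $d$-stars in $H$.

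For (c), fix a vertex $v$ and note that $d_H(v) \sim \mathrm{Bin}(n-1, p)$ has mean $(n-1)p = (1 + o(1))np$. Since $np \gg \log n$, I can choose $\varepsilon = \varepsilon(n) = o(1)$ satisfying $\varepsilon^2\, np = \omega(\log n)$, for instance $\varepsilon = \sqrt{9 \log n / (np)}$. Lemma~\ref{lem:Chernoff}\ref{Chernoff1} then yields
\[
\mathbb{P}\bigl(|d_H(v) - (n-1)p| > \varepsilon (n-1) p\bigr) \leq 2 e^{-\varepsilon^2 (n-1) p / 3} \leq n^{-3}
\]
for $n$ sufficiently large, and a union bound over the $n$ vertices gives the claimed concentration with probability at least $1 - n^{-2}$.

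For parts (a) and (b), set $d = k - 1$ (for (a)) or $d = \lceil \log n / \log(f \log n) \rceil$ (for (b)), and consider
\[
X \;=\; \sum_{v \in V} \binom{d_H(v)}{d} \;=\; \Bigl|\bigl\{(v, S) : v \in V,\ S \in \tbinom{V \setminus \{v\}}{d},\ vu \in E(H) \text{ for all } u \in S\bigr\}\Bigr|.
\]
Note that $\{X > 0\} \subseteq \{\Delta(H) \geq d\}$, and by linearity, $\mu := \mathbb{E}[X] = n \binom{n-1}{d} p^d$. In (a), $\mu = \Theta(n^k p^{k-1})$, which diverges by the hypothesis $p \gg n^{-k/(k-1)}$. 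In (b), a Stirling estimate $\binom{n-1}{d} \gtrsim (e(n-1)/d)^d / \sqrt{d}$, combined with $np = 1/f$, gives $\mu \gtrsim n\, e^d / ((fd)^d \sqrt{d})$. Setting $L := \log(f \log n)$ and using $d \approx \log n / L$, a short calculation based on the identity $d \log(fd) = \log n - (\log n)(\log L)/L$ yields $\log \mu \geq d + (\log n)(\log L)/L - O(\log d)$; the hypotheses $f = n^{o(1)}$ and $f \to \infty$ force $L = o(\log n)$ and $L \to \infty$, hence $d \to \infty$ and $\mu \to \infty$ in (b) as well.

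It remains to control $\mathrm{Var}(X)$. I will bound it by summing covariances over pairs of stars $(v, S), (w, T)$ sharing at least one edge; the only possible patterns are (i) $v = w$ with $|S \cap T| = j \geq 1$, or (ii) $v \neq w$ with $w \in S$ and $v \in T$ (sharing exactly the edge $vw$). Counting each pattern and dividing by $\mu^2$, the contribution from (i) with fixed $j$ is of order $\binom{d}{j}(d/n)^j/(np)^j$, and that from (ii) is $O(d^2 / (n\cdot np))$. In (a), $d$ is constant and the hypothesis $n^k p^{k-1} \to \infty$ implies $n (np)^j \to \infty$ for each $j \leq k-1$, so each of the $O(1)$ terms is $o(1)$ and Chebyshev yields $X > 0$ a.a.s.

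The main obstacle lies in the variance bookkeeping for (b): because $d(n) \to \infty$, the sum over shared-edge configurations has $\Theta(d)$ terms, each of which must be shown to be $o(\mu^2)$, which requires careful use of the precise growth estimate of $\mu$ obtained above. A cleaner alternative is to invoke Janson's inequality: it suffices to show $\Delta^* := \sum_{(v,S) \sim (w,T)} \mathbb{P}[\text{both stars appear}] = O(\mu^2)$ (the sum running over pairs sharing an edge), after which $\mathbb{P}(X = 0) \leq \exp\bigl(-\mu^2/(\mu + \Delta^*)\bigr) \to 0$ completes the argument.
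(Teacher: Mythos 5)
Your part (c) matches the paper's argument exactly. For parts (a) and (b), you take a genuinely different route: the paper cites a black-box result (Theorem~3.1(ii) of Bollob\'as) asserting that if the expected number of vertices of degree $d$ tends to infinity then a.a.s.\ one exists, whereas you attempt a self-contained second-moment / Janson argument on the count of $d$-stars. This is a perfectly reasonable strategy, and working with stars rather than exact-degree-$d$ vertices is actually cleaner, since it drops the $(1-p)^{n-1-d}$ factor. Your expectation estimates for both parts are correct and match the paper's growth rates.

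However, there is a genuine gap in your treatment of part (b). You explicitly flag the variance verification as the ``main obstacle'' and propose Janson's inequality as an alternative, but you carry out neither route. Simply stating that ``it suffices to show $\Delta^* = O(\mu^2)$'' is not a proof, and moreover that sufficiency claim is too weak: $\mu^2/(\mu + \Delta^*)$ is bounded when $\Delta^* = \Theta(\mu^2)$, so $\Pr(X=0)$ would only be bounded below $1$ rather than tending to $0$. What you actually need is $\Delta^* = o(\mu^2)$ (together with $\mu\to\infty$, which you have). The estimate does hold, but verifying it requires the computation you skipped. With the correct contribution from case (i), namely $\binom{d}{j} d^j / \bigl(n (np)^j\bigr)$ for each $j \ge 1$ (note your written formula has $(np)^j$ where it should read $n\,p^j$; for $j\ge 2$ your version underestimates the contribution by a factor of $n^{j-1}$, though it happens not to affect your stated conclusion in part (a)), the sum over $j$ collapses to $\tfrac1n\bigl[(1 + d/(np))^d - 1\bigr]$. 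Using $np = 1/f$ and $d\log(fd) = \log n - d\log L$ with $L = \log(f\log n)$, one finds $(1 + df)^d \approx n\,e^{-d\log L}$, so the whole expression is $\approx e^{-d\log L} \to 0$ since both $d\to\infty$ and $\log L\to\infty$. Until a computation along these lines is actually done, part (b) of your proposal is incomplete.
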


\begin{proof}
If $p\gg n^{-\frac{k}{k-1}}$ for any integer $k\geq 2$, then it follows from a simple second moment calculation that $H$ a.a.s.~contains a star with $k-1$ edges, and hence $\Delta(H) \ge k-1$; see Theorem~5.3 in~\cite{frieze2016introduction} for more details.

Part~\ref{lem:gnp_degree_max2} can be obtained from a similar application of the second moment method. For simplicity, we apply Theorem~3.1~(ii) from~\cite{bollobas2001random}, stating that, if $n^{-3/2} \ll p \ll 1-n^{-3/2}$ and the expected number of vertices of degree $d = d(n)$ in $H \sim G(n,p)$ tends to infinity, then with high probability $H$ contains at least one vertex of degree $d$. 

For $p = \frac{1}{nf}$ and 
$d = \frac{\log n}{\log (f \log n)}$, we can lower bound the expected number of vertices of degree $d$ by
\begin{align*}
    n \binom{n-1}{d} p^d (1-p)^{n-1-d} 
    & \ge n \left( \frac{(n-1)p}{d} \right)^d (1 - np) \ge \tfrac12 n \left( \frac{1}{2fd} \right)^d \\
    & = \frac{1}{2}e^{\log n - d \log(2fd) }
    = \frac{1}{2}e^{ \frac{\log n}{\log (f \log n)} \left(\log\log \sqrt{f\log n}\right) }
\end{align*}
which tends to infinity. Thus we must have at least one vertex of degree $d$, and hence $\Delta(H) \ge d$.

Part (c) follows by applying Lemma~\ref{lem:Chernoff}\ref{Chernoff1} to the degree of each vertex, and then taking a union bound over all $n$ vertices.
\end{proof}

We can also bound the number of edges, both globally and, provided the edge probability is not too low, in all large induced subgraphs.

\begin{lemma}[Edge counts in $G(n,p)$]\label{lem:gnp_edge.counts}
Let $p=p(n)\in (0,1)$ with $p\gg n^{-2}$, and let $H\sim G(n,p)$. Then a.a.s.~the following statements hold:
\begin{enumerate}[leftmargin=40pt,label = (\alph*)]
    \item\label{lem:gnp_edge.counts_concentration1} $e(H)=(1\pm o(1))\frac{n^2p}{2}$, and
\end{enumerate}    
\begin{enumerate}[leftmargin=40pt, start=2, label = (\alph*)]
    \item\label{lem:gnp_edge.counts_concentration2} if $p \gg \frac{\log n}{n}$, then with probability at least $1 - n^{-2}$, every set $S \subseteq V(H)$ of size $s \ge \frac{20 \log n}{p}$ satisfies $e_H(S)\geq \frac{1}{4}s^2p$.
\end{enumerate}
\end{lemma}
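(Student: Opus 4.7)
The plan is to apply the Chernoff bounds from Lemma~\ref{lem:Chernoff}\ref{Chernoff1} to the natural binomial random variables, combined with a union bound over subsets in part~\ref{lem:gnp_edge.counts_concentration2}.

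For part~\ref{lem:gnp_edge.counts_concentration1}, observe that $e(H) \sim \mathrm{Bin}\parens*{\binom{n}{2}, p}$ has mean $\mu = \binom{n}{2}p = (1+o(1))\tfrac{n^2 p}{2}$. The assumption $p \gg n^{-2}$ forces $\mu \to \infty$, so choosing any $\varepsilon = \varepsilon(n) \to 0$ with $\mu \varepsilon^2 \to \infty$ (for instance $\varepsilon = \mu^{-1/3}$) and invoking Lemma~\ref{lem:Chernoff}\ref{Chernoff1} immediately yields $e(H) = (1 \pm o(1)) \tfrac{n^2 p}{2}$ a.a.s.

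For part~\ref{lem:gnp_edge.counts_concentration2}, I would fix a subset $S \subseteq V(H)$ of size $s$ and apply Lemma~\ref{lem:Chernoff}\ref{Chernoff1} to $e_H(S) \sim \mathrm{Bin}\parens*{\binom{s}{2}, p}$, whose mean is $\mu_s = \tfrac{s(s-1)p}{2}$. A subtle point is that the target $\tfrac{s^2 p}{4}$ is strictly larger than $\mu_s/2$, so the natural choice $\varepsilon = 1/2$ is not quite admissible; instead I would take $\varepsilon = \tfrac{s-2}{2(s-1)}$, for which $(1 - \varepsilon)\mu_s = \tfrac{s^2 p}{4}$ exactly. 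The hypothesis $s \ge \tfrac{20 \log n}{p}$ forces $s \to \infty$, so $\varepsilon = \tfrac{1}{2} - o(1)$ and the Chernoff exponent simplifies to $\tfrac{\mu_s \varepsilon^2}{2} = (1 - o(1))\tfrac{s^2 p}{16} \ge \parens*{\tfrac{5}{4} - o(1)} s \log n$, using $sp \ge 20 \log n$. Hence $\Pr\parens*{e_H(S) < \tfrac{s^2 p}{4}} \le n^{-(5/4 - o(1))s}$. A union bound over the $\binom{n}{s} \le n^s$ subsets of size $s$, followed by summation over $s$ from $\lceil 20 \log n / p \rceil$ to $n$, gives total failure probability at most $\sum_{s} n^{-(1/4 - o(1))s} \ll n^{-2}$, as required.

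The main technical obstacle is precisely the calibration of $\varepsilon$ in part~\ref{lem:gnp_edge.counts_concentration2}: because the target $\tfrac{s^2 p}{4}$ just barely exceeds $\mu_s /2$, one cannot set $\varepsilon = 1/2$ outright, and essentially all of the slack in $\varepsilon^2 \to \tfrac{1}{4}$ is needed for the Chernoff exponent to beat the $n^s$ produced by the union bound. The constant $20$ in the hypothesis $s \ge \tfrac{20\log n}{p}$ is chosen precisely to leave a positive cushion in this comparison.
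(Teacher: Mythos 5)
Your proof is correct and takes essentially the same approach as the paper: part~\ref{lem:gnp_edge.counts_concentration1} by a direct Chernoff bound, part~\ref{lem:gnp_edge.counts_concentration2} by Chernoff plus a union bound over the $\binom{n}{s}$ sets and over $s$. The one place you go beyond the paper is in making explicit the calibration $\varepsilon = \tfrac{s-2}{2(s-1)}$ needed because $\tfrac{s^2p}{4}$ slightly exceeds $\tfrac{\mu_s}{2} = \tfrac{s(s-1)p}{4}$; the paper silently absorbs this into the factor $(1-o(1))$ appearing in its exponent $\tfrac{1}{16}(1-o(1))s^2p$, which amounts to the same thing.
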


\begin{proof}
Part~\ref{lem:gnp_edge.counts_concentration1} follows directly from Lemma~\ref{lem:Chernoff}\ref{Chernoff1}.
For part~\ref{lem:gnp_edge.counts_concentration2}, we take a union bound over all sets $S$ of $s$ vertices, again applying Lemma~\ref{lem:Chernoff}\ref{Chernoff1} to bound the probability that such a set contains too few edges. This results in a bound of 
\begin{align*}
    \sum_{s=\frac{20\log n}{p}}^n \binom{n}{s} e^{-\frac{1}{16}(1-o(1))s^2p}
    \leq \sum_{s=\frac{20\log n}{p}}^n e^{s\log n -
        \frac{1}{16}(1-o(1))s^2p} 
    \leq \sum_{s=\frac{20\log n}{p}}^n e^{-0.2 s\log n} 
    < n^{-2},
\end{align*}
proving the lemma.
\end{proof}

Aside from knowing how many edges the random graph contains, we shall also need some knowledge about how they are distributed. The following result describes the structure of sparse random graphs.

\begin{lemma}\label{lem:gnp_forest.regime}
Let $p=p(n)\in (0,1)$ with $p\ll n^{-1}$, and let $H\sim G(n,p)$. Then a.a.s. $H$ is a forest, and moreover the order $\lambda(H)$ of its largest component satisfies the following bounds:
\begin{enumerate}[leftmargin=40pt,label = (\alph*)]
    \item\label{lem:gnp_forest.regime_log} $\lambda(H) \le \log n$,
    \item\label{lem:gnp_forest.regime_constant} if $p\ll n^{-\frac{k+1}{k}}$ for some constant $k \in \mathbb{N}$, then $\lambda(H) \le k$, and
    \item\label{lem:gnp_forest.regime_intermediate} if $p = \frac{1}{nf}$ for some $f = f(n)$ satisfying $1 \ll f = n^{o(1)}$, then $\lambda(H) \le (1 + o(1)) \frac{\log n}{\log f}$.
\end{enumerate}
\end{lemma}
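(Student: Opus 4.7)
My plan is to establish all parts via first-moment calculations on $H \sim G(n,p)$. To show $H$ is a.a.s.\ a forest, I bound the expected number of cycles of length $\ell$ in $H$ by $\binom{n}{\ell}\frac{(\ell-1)!}{2}p^\ell \le \frac{(np)^\ell}{2\ell}$. Summing over $\ell \ge 3$ gives $O((np)^3) = o(1)$, since $p \ll n^{-1}$ forces $np \to 0$; Markov's inequality then shows that $H$ is a.a.s.\ acyclic.

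For the bounds on $\lambda(H)$, the key observation is that whenever $H$ contains a component of order at least $m$, that component contains a subtree on exactly $m$ vertices (peel off leaves from any spanning tree), which is in particular a tree-subgraph of $H$. Hence, by Cayley's formula and Stirling,
\[
\Pr[\lambda(H) \ge m] \le T_m := \binom{n}{m} m^{m-2} p^{m-1} \le \frac{(enp)^m}{p\, m^{5/2} \sqrt{2\pi}}.
\]
Each of the three bounds then follows from an appropriate choice of $m$.

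For \ref{lem:gnp_forest.regime_constant}, set $m = k+1$; since $p \ll n^{-(k+1)/k}$, we get $T_{k+1} \le (k+1)^{k-1} n^{k+1} p^k = o(1)$, so $\lambda(H) \le k$ a.a.s. For \ref{lem:gnp_forest.regime_intermediate}, set $m = \lceil (1+\eps)\log n / \log f \rceil$ for an arbitrary $\eps > 0$; using $np = 1/f$ together with $f = n^{o(1)}$ and $e^m = n^{o(1)}$ (the latter because $\log f \to \infty$), I get $T_m \le f e^m / n^\eps = n^{o(1) - \eps} = o(1)$, and letting $\eps \to 0$ slowly gives the claim. For \ref{lem:gnp_forest.regime_log}, set $m = \log n$; unfolding the bound yields $\log T_m \le (\log n - 1)\log(np) + 2 \log n + O(1)$, and since $p \ll n^{-1}$ forces $\log(np) \to -\infty$, eventually $\log T_m \le -\log n + O(1) \to -\infty$, giving $\lambda(H) \le \log n$.

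The most delicate case is \ref{lem:gnp_forest.regime_log}: the threshold $\log n$ is essentially the correct order of magnitude, since for $p = c/n$ with a constant $c \in (0,1)$ the largest component of $G(n,p)$ is already $\Theta(\log n)$. The union bound succeeds only because $np = o(1)$ supplies decay $(enp)^{\log n} \to 0$ fast enough to offset the $1/p$ factor in $T_m$; this requires careful logarithmic bookkeeping, but no new ideas beyond the first-moment method are needed.
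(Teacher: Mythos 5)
Your proof is correct and follows essentially the same first-moment strategy as the paper, but your treatment is more unified and slightly cleaner. The paper outsources parts~\ref{lem:gnp_forest.regime_log} and~\ref{lem:gnp_forest.regime_constant} to cited results in Frieze--Karo\'nski and only does an explicit first-moment calculation for part~\ref{lem:gnp_forest.regime_intermediate}, where it upper-bounds $\Pr[\lambda(H) > m]$ by a geometric sum $\sum_{k \ge m} p^{-1}(nep)^k$. You instead observe that any component of order $\ge m$ contains a tree on \emph{exactly} $m$ vertices (peel leaves from a spanning tree), so a single expectation bound $\Pr[\lambda(H) \ge m] \le T_m := \binom{n}{m}m^{m-2}p^{m-1}$ suffices; all three parts then drop out of the one inequality $T_m \le (enp)^m/(p\,m^{5/2}\sqrt{2\pi})$ by the appropriate choice of $m$. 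This buys you a self-contained, citation-free proof and avoids the geometric-series step, at no real cost; the only place requiring care is part~\ref{lem:gnp_forest.regime_log}, where you correctly note that $\log(np) \to -\infty$ eventually swamps the $-\log p \approx \log n$ term coming from the $p^{m-1}$ exponent, and that bookkeeping is carried out properly. (One minor cosmetic point: $m$ should be taken to be an integer, e.g.\ $m = \lfloor\log n\rfloor + 1$ in part~\ref{lem:gnp_forest.regime_log}, but this does not affect the argument.)
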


\begin{proof}
That $H$ contains no cycles, and hence is a forest, can be shown by taking a union bound over all possible cycles; see Theorem~2.1
in~\cite{frieze2016introduction} for the details. We now bound the orders of the trees in this forest. For part~\ref{lem:gnp_forest.regime_log}, we refer to Lemma~2.12(ii) in~\cite{frieze2016introduction}, which asserts that with high probability a random graph $H'\sim G(n,e^{-2} n^{-1})$ contains no trees of order larger than
$\log n$. By monotonicity the same bound holds when $p \ll n^{-1}$. 
For the bound in~\ref{lem:gnp_forest.regime_constant}, notice that
there are only a constant number of non-isomorphic trees on
$k+1$ vertices, and by a simple first moment calculation (see Theorem 5.3 in~\cite{frieze2016introduction}) each of these trees appears in $H$ with vanishing probability when $p \ll n^{-\frac{k+1}{k}}$. The bound in part~\ref{lem:gnp_forest.regime_intermediate} can again be obtained by running a first moment calculation, the details of which we now sketch. As there are $k^{k-2}$ labelled trees on $k$ vertices, the total possible number of tree components of order $k$ is $\binom{n}{k} k^{k-2}$. 
For such a tree to appear as a subgraph, we need its $k-1$ edges to appear in $G(n,p)$.
Hence, the probability of seeing such a tree is at most
\[ \binom{n}{k} k^{k-2} p^{k-1} \le \left( \frac{ne}{k} \right)^k \frac{(kp)^k}{k^2p}  
\le p^{-1} (nep)^k . \]
For $p = \frac{1}{nf}$ and any $\eps>0$, the sum of this expression over all $k \ge (1 + \eps) \frac{\log n}{\log f}$ is at most
\[
p^{-1} \left(nep\right)^{(1 + \eps) \frac{\log n}{\log f}} \left( \frac{1}{1 - nep} \right) \le 2 e^{\log (nf) - (1+\eps)\frac{\log n}{\log f} (\log f - 1)} \le 
2e^{-\frac{\eps}{2}\log n} = o(1)
\]
and hence a.a.s.~the largest component has order at most $(1 + o(1)) \frac{\log n}{\log f}$.
\end{proof}

Switching to a much denser range, we find that when the edge probability is sufficiently large, not only does $G(n,p)$ contain cycles, but every edge is contained in a triangle.

\begin{lemma}\label{lem:gnp_triangles}
Let $p = p(n)\in (0,1)$ be such that  $p \gg \sqrt{\frac{\log n}{n} }$, and let $H\sim G(n,p)$. 
Then a.a.s.~every edge of $H$ is contained in a triangle.
\end{lemma}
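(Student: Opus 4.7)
The statement is a classic first-moment computation, so my approach is a direct union bound over all pairs of vertices. For each unordered pair $\{u,v\} \in \binom{V}{2}$, let $B_{u,v}$ denote the ``bad'' event that $uv \in E(H)$ but $u$ and $v$ share no common neighbour in $H$. The goal is to show that a.a.s.\ no $B_{u,v}$ occurs.

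The key observation is that, for a fixed pair $\{u,v\}$ and a third vertex $w$, the event ``$w$ is a common neighbour of $u$ and $v$'' depends only on the two edges $uw$ and $vw$, and has probability $p^2$. For different choices of $w$ these edge sets are pairwise disjoint, and they are also disjoint from the edge $uv$, so all of these events are mutually independent. Consequently,
\[
\Pr[B_{u,v}] \;=\; p\,(1-p^2)^{n-2} \;\le\; p \exp\!\bigl(-p^2(n-2)\bigr).
\]
Taking a union bound over the $\binom{n}{2}$ pairs, the expected number of edges not contained in a triangle is at most
\[
\binom{n}{2} p \exp\!\bigl(-p^2(n-2)\bigr) \;\le\; \tfrac{1}{2} n^2 p \exp\!\bigl(-(1-o(1))p^2 n\bigr).
\]
Under the hypothesis $p \gg \sqrt{\log n/n}$, we have $p^2 n \gg \log n$, so the exponential factor decays faster than any polynomial in $n$, easily overwhelming the $n^2 p \le n^2$ prefactor. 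Hence the expectation is $o(1)$, and Markov's inequality (equivalently, the first moment method) yields that a.a.s.\ there are no bad pairs, i.e.\ every edge of $H$ lies in at least one triangle.

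There is no real obstacle here; the only thing to be slightly careful about is noting that the three ``edge slots'' $uv$, $uw$, $vw$ are distinct edges of $K_n$, so one really does get genuine independence of the indicator for $uv \in E(H)$ and of the $n-2$ indicators ``$w$ is a common neighbour''. Once that independence is in hand, the inequality $1-x \le e^{-x}$ and the assumed lower bound on $p$ do all the work.
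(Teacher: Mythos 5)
Your proposal is correct and follows essentially the same route as the paper: a union bound over all pairs $\{u,v\}$, using the independence of the $n-2$ common-neighbour events to get the factor $(1-p^2)^{n-2}$, and concluding from $p^2 n \gg \log n$. The extra factor of $p$ you retain for the event $uv \in E(H)$ is a harmless refinement the paper simply drops.
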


\begin{proof}
An easy application of the union bound gives
\begin{align*}
    & \Pr\left(\exists e=uv\in E(H):~ uv~ \text{is not in a triangle} \right) \\
    \leq~  &
    \Pr\left(
    \exists \{u,v\}\in\binom{V(H)}{2}:~ uw\notin E(H)~ \text{or}~vw\notin E(H)~ \text{for all }w\in V(H)\setminus\{u,v\}
    \right)\\
    \leq~ &
    \binom{n}{2}\cdot (1-p^2)^{n-2} < e^{2\log n - p^2(n-2)} = o(1)\, ,
\end{align*}
which proves the lemma.
\end{proof}

Finally, in our construction of minimal Ramsey graphs with vertices of low degree, we shall make use of some mild pseudorandom properties concerning the degrees, connectivity, and expansion of the target graph $H$. The required properties are collected in the definition below.

\begin{definition}[Well-behaved] \label{def:wellbehaved}
We say an $n$-vertex graph $H$ is \emph{well-behaved} if it satisfies the following properties:
\begin{enumerate}[label=\itmarab{W}]
    \item\label{def:wb_min_unique} $H$ has a unique vertex $u$ of minimum degree $\delta(H)$,
    \item\label{def:wb_pair_degree} every pair of vertices in $H$ has codegree at most $\tfrac12 \delta(H)$,
    \item\label{def:wb_3conn} $H$ is $3$-connected, and
    \item\label{def:wb_components} removing $\delta(H)$ vertices from $H$ cannot create a component of size $k \in \brackets*{\tfrac12 \delta(H), \tfrac12 n} $.
\end{enumerate}
\end{definition}

As might be expected, random graphs are highly likely to be well-behaved.

\begin{lemma} \label{lem:wellbehaved}
If $\frac{\log n}{n} \ll p \ll 1$ then a.a.s.~$H \sim G(n,p)$ is well-behaved.
\end{lemma}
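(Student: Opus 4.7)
The plan is to verify properties (W1)--(W4) separately. Throughout, Lemma~\ref{lem:gnp_degrees}\ref{lem:gnp_degree_concentration} gives a.a.s.\ that $d_H(v) = (1\pm o(1))np$ for every vertex $v$, so $\delta(H) = (1-o(1))np$ and $np \to \infty$. Property~(W2) is the most direct: the codegree of any fixed pair is $\mathrm{Bin}(n-2,p^2)$ with mean $(n-2)p^2 = o(np)$, since $p=o(1)$, so setting $t = \tfrac{1}{2}\delta(H) \ge \tfrac{1}{3}np$ gives $t \ge 7(n-2)p^2$ for large $n$; Lemma~\ref{lem:Chernoff}\ref{Chernoff3} then yields $\Pr[\mathrm{codeg}(u,v) \ge t] \le e^{-t} \le e^{-np/3}$, and since $np \gg \log n$ a union bound over the $\binom{n}{2}$ pairs is $o(1)$. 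Property~(W3) will follow from the classical Erd\H{o}s--R\'enyi/Bollob\'as theorem that a.a.s.\ $\kappa(G(n,p)) = \delta(G(n,p))$ whenever $p \gg \log n/n$; since $\delta(H)\to\infty$, this certainly gives $3$-connectivity.

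For property~(W4), I would apply a union bound over pairs $(S,C)$, where $|S|=\delta:=\delta(H)$ and $C$ is a candidate component of size $k\in[\delta/2,n/2]$ in $H-S$. Such a $C$ must have no edges to $(V\setminus S)\setminus C$, a set of size at least $n/4$ since $\delta=o(n)$ and $k\le n/2$. This yields
\begin{equation*}
    \sum_{k=\lceil\delta/2\rceil}^{\lfloor n/2\rfloor} \binom{n}{\delta}\binom{n-\delta}{k}(1-p)^{k(n-\delta-k)} \le \sum_k e^{O(np\log(1/p)) + O(k\log(1/p)) - \tfrac{1}{4} npk},
\end{equation*}
and since $np \gg \log n \ge \log(1/p)$, each summand is at most $e^{-npk/8}$, so the total sum is $o(1)$.

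The delicate property is (W1). Setting $\alpha(d) = n\Pr[\mathrm{Bin}(n-1,p)=d]$, I would use a second-moment or Poisson-approximation argument to localize $\delta(H)$ in a narrow window around the smallest $d^*$ with $\sum_{j\le d^*} \alpha(j) \ge 1$, and then analyse the joint distribution of the counts $Y_{d^*-1}$ and $Y_{d^*}$ of vertices of those exact degrees to force a unique vertex of degree $\delta(H)$. This is the main obstacle of the proof: for $\log n/n \ll p \ll 1$ the ratio $\alpha(d^*-1)/\alpha(d^*)$ tends to $1$, so the Poisson parameter governing the number of min-degree vertices is only bounded rather than $o(1)$, and uniqueness must therefore be extracted by a more delicate analysis of the lower tail of the degree distribution than a crude Markov-type bound.
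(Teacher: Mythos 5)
Your treatments of properties~\ref{def:wb_pair_degree}, \ref{def:wb_3conn}, and~\ref{def:wb_components} are sound. For~\ref{def:wb_pair_degree} your single application of Lemma~\ref{lem:Chernoff}\ref{Chernoff3} with threshold $\Theta(np)\ge 7(n-2)p^2$ (valid since $p=o(1)$) is in fact a little cleaner than the paper's two-case Chernoff argument; for~\ref{def:wb_components} your union bound over pairs $(S,C)$ is essentially identical to the paper's (the paper uses the cruder bounds $n^{\delta}n^{k}$ on the binomial coefficients, but the comparison $k\log n,\ \delta\log n \ll pkn$ is the same); and citing the classical $\kappa(G(n,p))=\delta(G(n,p))$ result for~\ref{def:wb_3conn} is on the same footing as the paper's citation of Theorem~4.3 in~\cite{frieze2016introduction}. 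The only caveat in these parts is the usual one of applying tail bounds with the random threshold $\tfrac12\delta(H)$; this is harmless once one conditions on $\delta(H)=(1\pm o(1))np$, as the paper does explicitly via Lemma~\ref{lem:gnp_degrees}\ref{lem:gnp_degree_concentration}.

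The genuine gap is property~\ref{def:wb_min_unique}. You do not prove it: you sketch a localization-plus-Poisson strategy and then concede that, because consecutive point probabilities of $\mathrm{Bin}(n-1,p)$ near the minimum degree differ only by a factor $1-o(1)$ in this range, the expected number of vertices within $O(1)$ of the minimum degree is not small, so the naive first/second-moment route does not yield uniqueness. That diagnosis of the difficulty is accurate, but identifying the obstacle is not the same as overcoming it, and as written the proposal establishes only (W2)--(W4), not well-behavedness. The paper does not fight this battle at all: uniqueness of the minimum-degree vertex for $\frac{\log n}{n}\ll p\ll 1$ is exactly Theorem~3.9(i) in~\cite{bollobas2001random}, which the paper simply invokes (and later reuses quantitatively, in the form $\Pr(\delta(H)=d)=o(1)$ uniformly in $d$, inside the proof of Lemma~\ref{lem:gnp_neighbourhood_general}). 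So either cite that result, or carry out the delicate lower-tail analysis you allude to; in its current form your argument for (W1) is an unproven plan rather than a proof.
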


\begin{proof}
The property~\ref{def:wb_min_unique} is established in Theorem~3.9(i) of~\cite{bollobas2001random}.
Moreover, by Lemma~\ref{lem:gnp_degrees}\ref{lem:gnp_degree_concentration} we may condition
on $\delta(H)=(1\pm o(1))np$ from now on.
For property~\ref{def:wb_pair_degree}, observe that the distribution of the codegree of a given pair of vertices is $\mathrm{Bin}(n-2,p^2)$. We consider two cases. If $p^2 \geq \frac{10 \log n}{n}$
then by applying a Chernoff bound (Lemma~\ref{lem:Chernoff}\ref{Chernoff1}) 
we obtain \mbox{$\Pr(d_H(u,v) \geq 2np^2) \leq e^{-\frac{1}{3}(n-2)p^2} < e^{-3\log n}$} for large $n$. Taking a union bound over all $\binom{n}{2}$ pairs of vertices, this shows that with high probability the maximum codegree is at most $2np^2 < \tfrac12 \delta(H)$. 
Otherwise $p^2 \leq \frac{10 \log n}{n}$ and then Lemma~\ref{lem:Chernoff}\ref{Chernoff3} yields $\Pr(d_H(u,v) \geq 100 \log n) \leq e^{-100 \log n}$. We can then again take a union bound over all pairs to show the maximum codegree is at most $100 \log n$, which, as $\delta(H) = (1\pm o(1)) np \gg \log n$, is again less than half the minimum degree.

Property~\ref{def:wb_3conn} is shown to hold with high probability in Theorem~4.3 in~\cite{frieze2016introduction}.

This leaves us with property~\ref{def:wb_components}. Let us fix $k \in \brackets*{\tfrac12 \delta(H), \tfrac12 n}$, and bound the probability that we can create a component $K$ of size $k$ by removing a set $U$ of $\delta(H)$ vertices. In order for this to happen, there cannot be any edges between $K$ and $V(H) \setminus \left( K \cup U \right)$. For given $K$ and $U$, the probability of this is $(1-p)^{k(n-k-\delta(H))}$. Taking a union bound over all possible components $K$ and cut-sets $U$, the probability that property~\ref{def:wb_components} fails for a given $k$ is at most
\[ \binom{n}{k} \binom{n}{\delta(H)} \left( 1 - p \right)^{k(n-k-\delta(H))} \le n^k n^{\delta(H)} e^{-pk(n-k-\delta(H))} \le e^{(k+ \delta(H)) \log n - \tfrac14 pkn}, \]
where the last inequality uses the bounds $k \le \tfrac12 n$ and $\delta(H) = (1\pm o(1)) np \le \tfrac14 n$. Now, since $p \gg \frac{\log n}{n}$, we have $k \log n \ll pkn$, and, since $k \ge \tfrac12 \delta(H)$, we also have $\delta(H) \log n \ll pkn$. Hence, we can bound this error probability by $e^{-\frac18 pkn} = o(n^{-1})$, using again the fact that $pkn \gg \delta(H) \log n$. Therefore, even after taking a union bound over all possible values of $k$, we see that property~\ref{def:wb_components} holds with high probability.
\end{proof}

\smallskip

\subsection{Transference lemma}

As is evident in the statement of Theorem~\ref{thm:main}, our bounds on the simplicity of $H \sim G(n,p)$ depend on the subgraph induced by the neighbourhood of the minimum degree vertex (which, by virtue of Lemma~\ref{lem:wellbehaved} and property~\ref{def:wb_min_unique}, we may assume to be unique). Our next lemma allows us to transfer what we know about the random graph $G(\delta(H), p)$ to this subgraph.

\begin{lemma}\label{lem:gnp_neighbourhood_general}
Let $p=p(n)\in (0,1)$ be such that $p\gg \frac{\log n}{n}$.
For every $s\in [0.5np,2np]$, let
$\mathcal{P}_s$ be a graph property, and assume that a random graph 
$G_s\sim G(s,p)$ satisfies
$$
\Pr\left( G_s \in \mathcal{P}_s \right) = 1 - o(1).
$$
Then $H\sim G(n,p)$ a.a.s.~has a unique minimum degree vertex $u$ and $H[N_H(u)]\in \mathcal{P}_{d_H(u)}$.
\end{lemma}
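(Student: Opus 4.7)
The plan is to reduce the statement to the given hypothesis via vertex exchangeability and a two-stage exposure of the edges of $H$. By Lemma~\ref{lem:wellbehaved} and Lemma~\ref{lem:gnp_degrees}\ref{lem:gnp_degree_concentration}, a.a.s.\ $H$ has a unique minimum-degree vertex $u$ with $d_H(u)\in[0.5np,2np]$. Let $\mathcal{E}$ be the event that both of these happen but $H[N_H(u)]\notin\mathcal{P}_{d_H(u)}$; it suffices to show $\Pr(\mathcal{E})=o(1)$. Decomposing $\mathcal{E}=\bigsqcup_{v_0\in V(H)}\mathcal{E}_{v_0}$ by the identity of $u$ and applying vertex-exchangeability of $G(n,p)$ gives $\Pr(\mathcal{E})=n\cdot\Pr(\mathcal{E}_{v_0})$ for any fixed $v_0$, so I aim to show $\Pr(\mathcal{E}_{v_0})=o(1/n)$.

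For a fixed $v_0$, I would expose $H$ in two stages: first the subgraph $H^-$ consisting of all edges of $H$ not lying inside $N:=N_H(v_0)$ (so $H^-$ determines $d:=d_H(v_0)$, the set $N$, and the ``external degree'' $d_{H^-}(w)$ of each $w\in N$); then $H[N]$, which conditional on $H^-$ is distributed as $G(d,p)$ independently of $H^-$. The event that $v_0$ is the unique minimum-degree vertex of $H$ decomposes into an external condition on $H^-$ (that $d_{H^-}(v)>d$ for every $v\notin N\cup\{v_0\}$) and an internal condition on $H[N]$ (that $d_{H[N]}(w)>d-d_{H^-}(w)$ for every $w\in N$, a monotone-increasing event with thresholds determined by $H^-$).

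The key step is to introduce the strengthened stage-one event $\mathcal{N}_{v_0}:=\{d\in[0.5np,2np]\text{ and }d_{H^-}(v)>d\text{ for every }v\ne v_0\}$. On $\mathcal{N}_{v_0}$ all thresholds $d-d_{H^-}(w)$ are negative, so the internal condition is vacuous and $v_0$ is automatically the unique minimum-degree vertex. Moreover, the events $\mathcal{N}_{v_0}$ are pairwise disjoint: if both $\mathcal{N}_{v_0}$ and $\mathcal{N}_{v_1}$ held for distinct $v_0,v_1$, then in the non-adjacent case one directly obtains the contradictory pair $d_H(v_1)>d_H(v_0)$ and $d_H(v_0)>d_H(v_1)$, while in the adjacent case adding the two relevant inequalities forces $d_{H[N_H(v_0)]}(v_1)+d_{H[N_H(v_1)]}(v_0)<0$, impossible for non-negative degrees. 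Hence $\sum_{v_0}\Pr(\mathcal{N}_{v_0})\le 1$, and since $H[N]\mid\mathcal{N}_{v_0}\cap\{|N|=d\}\sim G(d,p)$,
\[
\Pr(\mathcal{E}_{v_0}\cap\mathcal{N}_{v_0}) \le \max_{d\in[0.5np,2np]}\Pr(G(d,p)\notin\mathcal{P}_d)\cdot\Pr(\mathcal{N}_{v_0}) = o(1)\cdot\Pr(\mathcal{N}_{v_0}),
\]
which on summing over $v_0$ gives $\sum_{v_0}\Pr(\mathcal{E}_{v_0}\cap\mathcal{N}_{v_0})=o(1)$.

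It remains to control $\sum_{v_0}\Pr(\mathcal{E}_{v_0}\setminus\mathcal{N}_{v_0})$. On this complementary event $v_0$ is still the unique minimum-degree vertex, but some $w\in N$ has $d_{H^-}(w)\le d$, forcing $d_{H[N]}(w)\ge d_H(w)-d_H(v_0)$. By the disjointness of the $\mathcal{E}_{v_0}$, the sum equals the probability that the unique minimum-degree vertex $u$ has some neighbour $w$ with $d_{H[N_H(u)]}(w)\ge d_H(w)-d_H(u)$. The main obstacle is showing this is $o(1)$: I would use a first-moment estimate over pairs $(u,w)$, combining the concentration of $d_H(u)$ and $d_H(w)$ from Lemma~\ref{lem:gnp_degrees}\ref{lem:gnp_degree_concentration} with tail bounds on $d_{H[N_H(u)]}(w)\sim\mathrm{Bin}(d_H(u)-1,p)$, to show that in the regime $p\gg\log n/n$ the typical ``degree gap'' (of order $\sqrt{np\log n}$) dominates the typical ``internal neighbourhood degree'' (of order $\max\{np^2,\log n/\log\log n\}$), and careful bookkeeping of the handful of $w$'s with small gap yields the required bound.
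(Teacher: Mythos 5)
Your reduction via exchangeability and the two-stage exposure is sound as far as it goes: $\mathcal{N}_{v_0}$ is measurable with respect to the first stage, so conditionally on it (and on $|N|=d$) the graph $H[N]$ is an \emph{unconditioned} $G(d,p)$, and your summation over $v_0$ handles that part correctly. The genuine gap is the treatment of $\mathcal{E}_{v_0}\setminus\mathcal{N}_{v_0}$. There you discard the requirement $H[N]\notin\mathcal{P}_d$ and aim to show that a.a.s.\ the unique minimum-degree vertex $u$ has no neighbour $w$ with $d_{H[N_H(u)]}(w)\ge d_H(w)-d_H(u)$. This claim is false in much of the lemma's range: the statement has no upper restriction on $p$ beyond $p<1$, and it is applied (e.g.\ in Corollary~\ref{cor:nhdgraphsparse}) for all $n^{-2/3}\ll p\ll 1$. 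For $w\in N_H(u)$ the internal degree $d_{H[N_H(u)]}(w)$ concentrates around $np^2$, whereas all degrees of $H$ lie in a window of width $O(\sqrt{np\log n})$ around $np$, so $d_H(w)-d_H(u)=O(\sqrt{np\log n})$ for \emph{every} $w$. Once $np^2\gg\sqrt{np\log n}$, i.e.\ $p\gg(\log n/n)^{1/3}$, essentially every neighbour satisfies your ``bad'' inequality, so the probability you need to be $o(1)$ is in fact $1-o(1)$. Your own heuristic already signals this: the comparison of $\sqrt{np\log n}$ with $\max\{np^2,\log n/\log\log n\}$ goes the wrong way as soon as $p\ge(\log n/n)^{1/3}$; and even for smaller $p$ the relevant quantity is the \emph{minimum} gap over the $\approx np$ neighbours (conditioned on uniqueness of the minimum), not the typical gap, which makes the promised ``bookkeeping'' delicate even where the claim is true.

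Consequently you cannot afford to drop the event $H[N]\notin\mathcal{P}_d$ on $\mathcal{E}_{v_0}\setminus\mathcal{N}_{v_0}$; but keeping it reinstates exactly the conditioning problem your construction was designed to avoid: on that event the law of $H[N]$, given the first stage and the uniqueness of the minimum, is $G(d,p)$ conditioned on a nontrivial degree event with positive thresholds, and $\mathcal{P}_d$ is an arbitrary (in particular non-monotone) property, so the hypothesis $\Pr(G_d\notin\mathcal{P}_d)=o(1)$ does not transfer. The paper's proof confronts this head-on: it conditions on $N_H(u)=S$ and shows that the conditioning event $C_S$ (degree lower bounds in $H-u$) has probability at least $\eps_n/2$, where $\eps_n\to 0$ is chosen to decay more slowly than both the failure probability $\beta_n$ of $\mathcal{P}$ and the anti-concentration error $\gamma_n$ of $\delta(H)$ (using facts from \cite{bollobas2001random} and \cite{grinshpun2015some}), whence the conditional failure probability is at most $\beta_n/(\eps_n/2)=o(1)$. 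Some device of this kind --- controlling the conditional law rather than arguing the conditioning is typically vacuous --- is missing from your argument, and without it the proof does not cover the stated range of $p$.
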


\begin{proof}
Let us fix some $\beta_n=o(1)$ such that
\begin{align}\label{beta}
\Pr\left( G_s \notin \mathcal{P}_s \right) = o(\beta_n)
\end{align}

for every $s\in [0.5np,2np]$.
Moreover, let $X_\delta$ denote the event that $H$ has a unique vertex 
of minimum degree and $0.5np\leq \delta(H) \leq 2np$.
By Lemma~\ref{lem:wellbehaved}, specifically property~\ref{def:wb_min_unique}, and Lemma~\ref{lem:gnp_degrees}
we know that $X_\delta$ holds with high probability.
In particular, we can find $\delta_n=o(1)$ such that 
$$
\Pr(0.5np\leq \delta(H) \leq 2np) \geq 
\Pr(X_{\delta})=1-\delta_n .$$   
In the following we will condition on the event $X_\delta$,
and whenever we do so, we will always let $u$ 
denote the unique minimum degree vertex in $H$. 
We will follow an approach similar to that used in the proof of Corollary 2.1.4 in~\cite{grinshpun2015some}. 
Before we proceed with the proof, we introduce some notation and facts that we will need later on. 
We begin with the fact that there exists $\gamma_n= o(1)$ such that the following holds:
    \begin{enumerate}
        \item [(1)] For any $d\geq 0$, we have $\Pr(\delta(H) = d) \leq \gamma_n$.
        \item [(2)] For any $d\geq 0$, if $\Hminusu\sim G(n-1,p)$, we have $\Pr(\delta(\Hminusu) \geq  d-1) \geq \Pr(\delta(H) \geq  d) - \gamma_n$.
    \end{enumerate}
    Part (1) follows from the proof of Theorem~3.9(i) in~\cite{bollobas2001random}, while part (2) is shown in the proof of Corollary 2.1.4 in~\cite{grinshpun2015some}. 

Next, let $\eps_n=o(1)$ be chosen such that
$\eps_n=\omega(\max\{\beta_n,\gamma_n,\delta_n\}).$   
We further let $t_n$ be the smallest integer such that $\Pr (\delta(H) \leq t_n) \geq 1-\eps_n$. 
Note that, by the minimality of $t_n$, we then have $\Pr (\delta(H) \leq t_n-1) < 1-\eps_n$. Using (1) for $d = t_n$, we conclude
    \begin{align}\label{eq:mindegreed}
            1-\eps_n \leq \Pr (\delta(H) \leq t_n) = \Pr (\delta(H) \leq t_n-1) + \Pr (\delta(H) = t_n) \leq 1-\eps_n+\gamma_n.
    \end{align}
Moreover, since $\eps_n>\gamma_n + \delta_n$, we obtain
$\Pr (\delta(H) \leq t_n) < 1-\delta_n < \Pr (\delta(H) \leq 2np)$ and thus $t_n\leq 2np$.  
    
    \smallskip
    
    Since $H \sim G(n,p)$, the subgraph $H - v$, for any fixed vertex $v$, has the distribution $G(n-1,p)$. However, recall that we are conditioning on the event $X_{\delta}$, and that in particular there is a unique vertex $u$ of minimum degree $d = d_H(u)$. We will be interested in the subgraph $\Hminusu = H - u$, and first need to determine how conditioning on $X_{\delta}$ affects its distribution.
    
    Suppose $S \subseteq V(\Hminusu)$ is the neighbourhood of $u$. As $u$ is the only vertex of degree at most $d$ in $H$, we must have $d_{\Hminusu}(v) \ge d+1$ for all $v \in V(\Hminusu) \setminus S$, and $d_{\Hminusu}(v) \ge d$ for all $v \in S$; let $C_S$ be the event that these lower bounds on the degrees in $\Hminusu$ hold. Aside from $C_S$, however, $X_{\delta}$ yields no further information about the graph $\Hminusu$, as the edges in $G(n,p)$ are independent. Thus, we have 
    \begin{align}\label{eq:distribution}
        \Pr_{G(n,p)} (H[S] \in \mathcal{P}_d |  X_{\delta} \land \set{N_H(u) = S}) = \Pr_{G(n-1,p)} (\Hminusu[S] \in \mathcal{P}_d |C_S).
    \end{align}

     Now, by the Law of Total Probability,
      \begin{align} \label{eq:sumuptot}
        & \Pr_{G(n,p)}(H[N_H(u)]\in \mathcal{P}_{d_H(u)} |X_{\delta}) \nonumber \\
        &= \sum_{0 \le d \le n-1} \sum_{S \in \binom{V(\Hminusu)}{d}} \Pr_{G(n,p)} \parens*{H[S] \in \mathcal{P}_d | X_{\delta} \land \set{N_H(u) = S} } \cdot \Pr_{G(n,p)} \parens*{N_H(u) = S | X_{\delta} } \nonumber \\
        & \geq \sum_{0.5np \le d \le t_n} \sum_{S \in \binom{V(\Hminusu)}{d}} 
        \Pr_{G(n-1,p)}(\Hminusu[S]\in \mathcal{P}_d| C_S) \cdot \Pr_{G(n,p)}(N_H(u) = S | X_{\delta}).
    \end{align}
     
     To estimate the first factor, we observe that
    \begin{align}\label{eq:C_S}
        \Pr_{G(n-1,p)}(C_S) &\geq \Pr(\delta(\Hminusu)\geq d+1) \geq \Pr(\delta(H)\geq d+2) - \gamma_n \notag \\
        &\geq \Pr(\delta(H)\geq t_n+2) - \gamma_n \geq \Pr(\delta(H)\geq t_n+1) - 2\gamma_n \geq \eps_n/2,
    \end{align}
    where the second inequality follows from~(2), for the third inequality we use $d\leq t_n$, the fourth inequality follows from~(1), and the last inequality comes from \eqref{eq:mindegreed} and since $\eps_n=\omega(\gamma_n)$. Hence we have
    \begin{align*}
        \Pr_{G(n-1,p)}\parens*{ \Hminusu[S] \in \mathcal{P}_d | C_S }
    & = 1 - \Pr_{G(n-1,p)}\left(\Hminusu[S]\notin \mathcal{P}_d|C_S \right) \\
    &= 1 - \frac{\Pr_{G(n-1,p)}\left( \set{\Hminusu[S]\notin \mathcal{P}_d} \wedge C_S\right)}{\Pr_{G(n-1,p)}(C_S)} \\
    & \geq 1 - \frac{\Pr_{G(n-1,p)}\left(\Hminusu[S]\notin \mathcal{P}_d\right)}{\Pr_{G(n-1,p)}(C_S)} \\
    & \geq 1 - \frac{\Pr_{G(d,p)} \left(G_d\notin \mathcal{P}_d\right)}{\eps_n/2} = 1 - o(1),
    \end{align*}
    where for the second inequality we use~\eqref{eq:C_S}
	and that $\Hminusu[S]\sim G(d,p)$ 
	and the final estimate uses~\eqref{beta} and 
	$\beta_n=o(\eps_n)$.    
	Putting this into~\eqref{eq:sumuptot}, we conclude that
  \begin{align*} 
        \Pr_{G(n,p)}(H[N_H(u)]\in \mathcal{P}_{d_H(u)} |X_{\delta})
        & \geq \sum_{0.5np \le d \le t_n} \sum_{S \in \binom{V(\Hminusu)}{d}} (1-o(1)) \Pr_{G(n,p)}(N_H(u) = S | X_{\delta}) \\
        & = (1-o(1)) \Pr_{G(n,p)}\left(0.5np\leq \delta(H) \leq t_n | X_{\delta} \right) \\
        & = (1-o(1)) \frac{\Pr_{G(n,p)}(\set{\delta(H) \leq t_n} \land X_{\delta})}{\Pr(X_\delta)} \\ 
        & \geq (1-o(1)) \frac{1 - \Pr_{G(n,p)}(\delta(H) > t_n) - \Pr_{G(n,p)}(\overline{X_{\delta}})}{\Pr(X_\delta)} \\
        & \geq (1-o(1)) \frac{1-\eps_n-\delta_n}{1-\delta_n} = 1-o(1). 
  \end{align*}
 This proves the lemma.
\end{proof}

\subsection{The smallest neighbourhood and quantitative simplicity}

We can now combine the results from Section~\ref{sec:gnpfacts} with Lemma~\ref{lem:gnp_neighbourhood_general} to obtain a sequence of corollaries describing the subgraph $F$ induced by the neighbourhood of the minimum degree vertex, which we shall later apply when proving Theorem~\ref{thm:main}. We will also use these to derive Corollary~\ref{cor:bounds} from Theorem~\ref{thm:main}.

\medskip

To start with, for the proof of the Ramsey simplicity of $H$ in case~\ref{thm:main:alwayssimple} of Theorem~\ref{thm:main}, it will be important that $F$ is an empty graph. This is guaranteed by the following corollary.

\begin{cor}\label{cor:nhdgraphempty}
Let $p=p(n)\in (0,1)$ be such that $\frac{\log n}{n} \ll p\ll n^{-\frac{2}{3}}$, and let $H\sim G(n,p)$. Then a.a.s.~$H$ has a unique minimum degree vertex $u$, and $e(N_H(u))=0$.
\end{cor}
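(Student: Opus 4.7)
The plan is to apply the transference lemma (Lemma~\ref{lem:gnp_neighbourhood_general}) to the graph property $\mathcal{P}_s$ defined as ``$G_s$ has no edges''. The conclusion of that lemma already guarantees that $H$ has a unique minimum degree vertex $u$, and, specialised to this choice of $\mathcal{P}_s$, that $H[N_H(u)] \in \mathcal{P}_{d_H(u)}$, meaning precisely that $e(N_H(u)) = 0$. Hence the corollary reduces to verifying the hypothesis of Lemma~\ref{lem:gnp_neighbourhood_general}: for every $s \in [0.5 np, 2np]$, we must show that $G_s \sim G(s,p)$ is a.a.s.~edgeless.

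This is a one-line first moment calculation. For any $s \le 2np$,
\[
\mathbb{E}[e(G_s)] = \binom{s}{2} p \le \tfrac12 s^2 p \le 2 n^2 p^3,
\]
and since $p \ll n^{-2/3}$ we have $n^2 p^3 = o(1)$. Note that this bound is uniform in $s$ over the relevant range. Markov's inequality therefore gives
\[
\Pr(G_s \notin \mathcal{P}_s) = \Pr(e(G_s) \ge 1) \le \mathbb{E}[e(G_s)] = o(1),
\]
so $G_s \in \mathcal{P}_s$ with high probability for every $s \in [0.5np, 2np]$.

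Since the edge probability $p \gg \frac{\log n}{n}$ is in the range required by Lemma~\ref{lem:gnp_neighbourhood_general}, applying the lemma with this property yields that a.a.s.~$H$ has a unique vertex $u$ of minimum degree and $H[N_H(u)] \in \mathcal{P}_{d_H(u)}$, i.e., $e(N_H(u)) = 0$. There is no real obstacle here: the work has already been done in Lemma~\ref{lem:gnp_neighbourhood_general}, which transfers the vanishing expected edge count in $G(d,p)$ (where $d = d_H(u) = (1 \pm o(1))np$) into the corresponding statement about the conditional distribution of $H[N_H(u)]$.
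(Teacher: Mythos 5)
Your proof is correct and takes essentially the same route as the paper: both invoke Lemma~\ref{lem:gnp_neighbourhood_general} with $\mathcal{P}_s$ the property of being edgeless, and both verify the hypothesis via a one-line first moment bound showing $\mathbb{E}[e(G_s)] = O(n^2p^3) = o(1)$ uniformly in $s \in [0.5np, 2np]$. The only difference is a harmless constant ($2n^2p^3$ versus the paper's $4n^2p^3$).
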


\begin{proof}
By Lemma~\ref{lem:gnp_neighbourhood_general} it is enough to prove that,
for every $s\in [0.5np,2np]$, with high probability $G_s\sim G(s,p)$ has
no edges. This holds, since by the assumptions on $s$ and $p$ we obtain $\mathbb{E}[e(G_s)]<s^2p\leq 4n^2p^3 = o(1)$.
\end{proof}

For larger values of $p$, we can control the number of edges appearing in $F$, which we will require for the proofs of both simplicity and non-simplicity.

\begin{cor} \label{cor:nhdgraphsparse}
Let $p=p(n)\in (0,1)$ be such that $n^{-\frac{2}{3}} \ll p\ll 1$, and let $H\sim G(n,p)$. Then a.a.s.~$H$ has a unique minimum degree vertex $u$, and the graph $F=H[N(u)]$ satisfies $\frac{1}{16}n^2p^3 \leq e(F)\leq 4n^2p^3$.
\end{cor}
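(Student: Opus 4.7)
The plan is to apply the transference lemma (Lemma~\ref{lem:gnp_neighbourhood_general}) with the graph property
\[
 \mathcal{P}_s = \set*{ G \;:\; \tfrac{1}{16} n^2 p^3 \le e(G) \le 4 n^2 p^3 }.
\]
Uniqueness of the minimum degree vertex is handled by the transference lemma itself (together with well-behavedness, Lemma~\ref{lem:wellbehaved}\ref{def:wb_min_unique}, which applies in this regime since $n^{-2/3} \ll p \ll 1$). So all that remains is to verify that for every $s \in [0.5np, 2np]$, a random graph $G_s \sim G(s, p)$ lies in $\mathcal{P}_s$ with probability $1 - o(1)$, where the $o(1)$ error term can be taken independent of $s$.

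The expectation is $\mathbb{E}[e(G_s)] = \binom{s}{2} p$, and for $s \in [0.5np, 2np]$ this lies in the interval $\brackets*{ (1 - o(1)) \tfrac{1}{8} n^2 p^3,\; (1 + o(1)) \cdot 2 n^2 p^3 }$. Crucially, since $p \gg n^{-2/3}$, even at the small end of the range $s = 0.5 np$ the expectation $\mathbb{E}[e(G_s)] \sim \tfrac18 n^2 p^3$ tends to infinity. Applying the Chernoff bound (Lemma~\ref{lem:Chernoff}\ref{Chernoff1}) with, say, $\eps = 1/2$, we conclude that with probability $1 - \exp(-\Omega(n^2 p^3)) = 1 - o(1)$ we have $e(G_s) = (1 \pm \tfrac12)\mathbb{E}[e(G_s)]$. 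This lies in the interval $\brackets*{\tfrac{1}{16} n^2 p^3, 4 n^2 p^3}$ for all sufficiently large $n$, giving $G_s \in \mathcal{P}_s$.

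The only mild subtlety is ensuring the failure probability in the Chernoff step is uniformly $o(1)$ over $s \in [0.5np, 2np]$; this follows immediately from monotonicity, as the bound is worst at the smallest value of $\mathbb{E}[e(G_s)]$, which still diverges. Invoking Lemma~\ref{lem:gnp_neighbourhood_general} then transfers this property from $G_s$ to $F = H[N_H(u)]$, completing the proof. No step poses a real obstacle; the only thing to keep an eye on is the condition $p \gg n^{-2/3}$, which is exactly what makes $n^2p^3 \to \infty$ and so makes the concentration argument work (and is also why the analogous empty-neighbourhood statement of Corollary~\ref{cor:nhdgraphempty} fails in this denser regime).
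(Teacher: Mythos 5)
Your proposal is correct and takes essentially the same route as the paper: invoke the transference lemma with $\mathcal{P}_s$ the event of having between $\tfrac{1}{16}n^2p^3$ and $4n^2p^3$ edges, and verify concentration of $e(G_s)$ for $s \in [0.5np, 2np]$ via Chernoff (the paper does this implicitly by citing Lemma~\ref{lem:gnp_edge.counts}\ref{lem:gnp_edge.counts_concentration1}, which is itself a direct Chernoff consequence). One small numerical slip: with $\eps = 1/2$, at the extreme $s = 0.5np$ you get a lower bound of $\tfrac12 \binom{s}{2}p = \tfrac{1}{16}n^2p^3 - \tfrac18 np^2$, which is strictly \emph{below} $\tfrac{1}{16}n^2p^3$; you should either take $\eps$ slightly smaller than $1/2$ (anything bounded away works) or, as the paper's lemma does, use $\eps = o(1)$ so that $e(G_s) = (1+o(1))\tfrac{s^2p}{2} \ge (1-o(1))\tfrac18 n^2p^3$, which comfortably exceeds $\tfrac{1}{16}n^2p^3$.
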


\begin{proof}
For every $s\in [0.5np,2np]$, Lemma~\ref{lem:gnp_edge.counts}\ref{lem:gnp_edge.counts_concentration1} guarantees that $G_s\sim G(s,p)$ almost surely has $(1 + o(1))\frac{s^2p}{2}\in [\frac{1}{16}n^2p^3,4n^2p^3]$ edges. The above statement now follows by an application of Lemma~\ref{lem:gnp_neighbourhood_general}.
\end{proof}

Finally, in the range $n^{-\frac23} \ll p \ll n^{-\frac12}$, when determining the $q$-Ramsey simplicity of $H$, we will make use of the fact that $F$ is typically a forest with small components, while also appealing to the fact that its maximum degree cannot be too small.

\begin{cor} \label{cor:nhdgraphsmalltrees}
Let $p=p(n)\in (0,1)$ be such that $n^{-\frac{2}{3}} \ll p\ll n^{-\frac{1}{2}}$, and let $H\sim G(n,p)$. Then a.a.s.~$H$ has a unique minimum degree vertex $u$, the graph $F=H[N(u)]$ induces a forest, and the order $\lambda(F)$ of the largest component in $F$ satisfies the following bounds:
\begin{enumerate}[leftmargin=40pt,label = (\alph*)]
    \item\label{cor:nhdgraphsmalltrees.regime_log} $\lambda(F) \le \frac12 \log n$,
    \item\label{cor:nhdgraphsmalltrees.regime_constant} if $p\ll n^{-\frac{k+1}{2k+1}}$ for some fixed integer $k \ge 2$, then $\lambda(F) \le k$, and
    \item\label{cor:nhdgraphsmalltrees.regime_intermediate} if $p = n^{-\frac12}f^{-1}$ for some $f = f(n)$ satisfying $1 \ll f = n^{o(1)}$, then $\lambda(F) \le \left(\frac14 + o(1) \right) \frac{\log n}{\log f}$.
\end{enumerate}
Moreover, the maximum degree $\Delta(F)$ of $F$ a.a.s.~satisfies the following:
\begin{enumerate}[leftmargin=40pt, start=4,label = (\alph*)]
    \item\label{cor:nhdgraphsmalltrees.regime_degree1} if $p \gg n^{-\frac{k}{2k-1}}$ for some fixed integer $k \ge 2$, then $\Delta(F) \ge k-1$, and
    \item\label{cor:nhdgraphsmalltrees.regime_degree2} if $p = n^{-1/2}f^{-1}$ for some $1 \ll f = f(n) = n^{o(1)}$, then $\Delta(F) \ge \parens*{\frac12 - o(1)} \frac{\log n}{\log ( f^2 \log n )}$.
\end{enumerate}
\end{cor}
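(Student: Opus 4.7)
The plan is to apply the transference lemma (Lemma~\ref{lem:gnp_neighbourhood_general}) in each case, which reduces the claim about $F = H[N_H(u)]$ to the corresponding claim about a random graph $G_s \sim G(s,p)$ for $s \in [0.5np, 2np]$. That $H$ has a unique minimum degree vertex $u$ in the first place is immediate from Lemma~\ref{lem:wellbehaved}\ref{def:wb_min_unique}, since $p \gg n^{-2/3} \gg \frac{\log n}{n}$. The remaining claims then break into two groups: those about $\lambda(F)$, which will follow from Lemma~\ref{lem:gnp_forest.regime} applied to $G_s$, and those about $\Delta(F)$, which will follow from Lemma~\ref{lem:gnp_degrees} applied to $G_s$. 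In both cases I would define $\mathcal{P}_s$ to be the property asserting the relevant bound for the $s$-vertex random graph $G_s$.

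For the forest and $\lambda$-bounds, first note $p \ll n^{-1/2}$ gives $sp \le 2np^2 \ll 1$, so $G_s$ is in the sparse regime where Lemma~\ref{lem:gnp_forest.regime} guarantees that $G_s$ is a.a.s.\ a forest. For~\ref{cor:nhdgraphsmalltrees.regime_log}, Lemma~\ref{lem:gnp_forest.regime}\ref{lem:gnp_forest.regime_log} yields $\lambda(G_s) \le \log s \le \log(2np)$; since $p = o(n^{-1/2})$, we have $\log(np) \le \tfrac12\log n - \omega(1)$, so $\log(2np) \le \tfrac12\log n$ for large $n$. For~\ref{cor:nhdgraphsmalltrees.regime_constant}, the required hypothesis $p \ll s^{-(k+1)/k}$ of Lemma~\ref{lem:gnp_forest.regime}\ref{lem:gnp_forest.regime_constant} is, after a short algebraic rearrangement, exactly $p \ll n^{-(k+1)/(2k+1)}$. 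For~\ref{cor:nhdgraphsmalltrees.regime_intermediate} with $p = n^{-1/2}f^{-1}$, writing $p = 1/(sf')$ for $G_s$ gives $f' = 1/(sp) = \Theta(f^2)$, which still satisfies $1 \ll f' = s^{o(1)}$, so Lemma~\ref{lem:gnp_forest.regime}\ref{lem:gnp_forest.regime_intermediate} yields $\lambda(G_s) \le (1+o(1))\frac{\log s}{\log f'}$. Since $\log s = (\tfrac12 - o(1))\log n$ and $\log f' = (2+o(1))\log f$, this simplifies to $(\tfrac14 + o(1))\frac{\log n}{\log f}$.

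For the degree bounds, part~\ref{cor:nhdgraphsmalltrees.regime_degree1} follows by the same transference: the hypothesis $p \gg s^{-k/(k-1)}$ of Lemma~\ref{lem:gnp_degrees}\ref{lem:gnp_degree_max1} applied to $G_s$ rearranges to $p \gg n^{-k/(2k-1)}$, yielding $\Delta(G_s) \ge k-1$. For~\ref{cor:nhdgraphsmalltrees.regime_degree2}, with $p = n^{-1/2}f^{-1}$ and $f' = 1/(sp) = \Theta(f^2)$ as above, Lemma~\ref{lem:gnp_degrees}\ref{lem:gnp_degree_max2} applied to $G_s$ gives
\[
 \Delta(G_s) \ge \frac{\log s}{\log(f' \log s)} = \frac{(\tfrac12 - o(1))\log n}{(1 + o(1))\log(f^2 \log n)} = \parens*{\tfrac12 - o(1)}\frac{\log n}{\log(f^2 \log n)},
\]
using $\log f' = 2\log f + O(1)$, $\log\log s = \log\log n + O(1)$, and the fact that $\log(f^2\log n) \ge \log\log n \to \infty$ to absorb the additive $O(1)$ errors.

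The whole argument is essentially a bookkeeping exercise: every substantive probabilistic estimate is already packaged in Section~\ref{sec:gnpfacts}, and the transference lemma handles the passage from $G_s$ to $F$. The only mild nuisance I anticipate is the algebraic translation of the various $p$-thresholds between the scale $n$ and the scale $s \asymp np$; I do not foresee any genuine obstacle.
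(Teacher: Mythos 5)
Your proof is correct and follows essentially the same route as the paper: apply the transference lemma (Lemma~\ref{lem:gnp_neighbourhood_general}), verify that the relevant bounds from Lemmas~\ref{lem:gnp_forest.regime} and~\ref{lem:gnp_degrees} hold a.a.s.\ for $G_s\sim G(s,p)$ with $s\in[0.5np,2np]$, and translate the thresholds from the scale $s$ to the scale $n$ by the same algebraic rearrangements $p\ll s^{-(k+1)/k}\Leftrightarrow p\ll n^{-(k+1)/(2k+1)}$, $p\gg s^{-k/(k-1)}\Leftrightarrow p\gg n^{-k/(2k-1)}$, and $1/(sp)=\Theta(f^2)$ when $p=n^{-1/2}f^{-1}$. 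The paper's proof is only slightly more terse; your added remarks about absorbing the $O(1)$ additive errors in the logarithms are correct and harmless.
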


\begin{proof}
By Lemma~\ref{lem:gnp_neighbourhood_general}, it suffices to verify that the corresponding bounds on $\lambda(G_s)$ and $\Delta(G_s)$ for $G_s \sim G(s,p)$ hold with high probability when $s \in [0.5np, 2np]$. These bounds are obtained as follows:
for property~\ref{cor:nhdgraphsmalltrees.regime_log} observe that $p\ll n^{-\frac{1}{2}}$ implies $p\ll s^{-1}$ and $s\ll n^{\frac12}$,
in which case Lemma~\ref{lem:gnp_forest.regime}\ref{lem:gnp_forest.regime_log} gives that $\lambda(G_s)\le \log s \le \frac{1}{2} \log n$ holds a.a.s.. 
For property~\ref{cor:nhdgraphsmalltrees.regime_constant} we use that $p\ll n^{-\frac{k+1}{2k+1}}$ implies $p\ll s^{-\frac{k+1}{k}}$,
and hence $\lambda(G_s)\le k$ holds a.a.s.~by Lemma~\ref{lem:gnp_forest.regime}\ref{lem:gnp_forest.regime_constant}.
For properties~\ref{cor:nhdgraphsmalltrees.regime_intermediate} and~\ref{cor:nhdgraphsmalltrees.regime_degree2}
observe that $p = n^{-\frac12}f^{-1}$ implies $\frac{0.5}{sf^2} \le p\le \frac{2}{sf^2}$
and $s=n^{\frac12 -o(1)}$,
which a.a.s.~leads to $\lambda(G_s)\le (1+o(1))\frac{\log s}{\log f^2} \le  \left(\frac14 + o(1) \right) \frac{\log n}{\log f}$ by Lemma~\ref{lem:gnp_forest.regime}\ref{lem:gnp_forest.regime_intermediate},
and to
$\Delta(G_s) \geq (1-o(1))\frac{\log s}{\log (f^2 \log s)} \ge \parens*{\frac12 - o(1)} \frac{\log n}{\log ( f^2 \log n )}$
by Lemma~\ref{lem:gnp_degrees}\ref{lem:gnp_degree_max2}.
Finally, for property~\ref{cor:nhdgraphsmalltrees.regime_degree1} we note that $p\gg n^{- \frac{k}{2k-1}}$ implies $p\gg s^{-\frac{k}{k-1}}$, and hence Lemma~\ref{lem:gnp_degrees}\ref{lem:gnp_degree_max1} ensures that $\Delta(G_s)\geq k-1$ a.a.s.
\end{proof}

With these bounds on the parameters of the subgraph $F$ induced by the neighbourhood of the minimum degree vertex, we are now in position to deduce Corollary~\ref{cor:bounds}, giving quantitative estimates on the value of $\tilde{q}(H)$ in the intermediate range.

\begin{proof}[Proof of Corollary~\ref{cor:bounds}]
We start by appealing to Lemma~\ref{lem:gnp_degrees}\ref{lem:gnp_degree_concentration} to observe that a.a.s.~$\delta(H) = (1 + o(1))np$.

Let us begin by establishing the lower bounds on $\tilde{q}(H)$. By Theorem~\ref{thm:main} we have $\tilde{q}(H) \ge (1 + o(1)) \max \set*{ \frac{\delta(H)}{\lambda(F)^2}, \frac{\delta(H)}{80 \log n}}$, and we can bound $\lambda(F)$ using Corollary~\ref{cor:nhdgraphsmalltrees}.

When $p \ll n^{-\frac{k+1}{2k+1}}$ for some fixed integer $k \ge 2$, then, by Corollary~\ref{cor:nhdgraphsmalltrees}\ref{cor:nhdgraphsmalltrees.regime_constant}, we a.a.s.~have $\lambda(F) \le k$. Thus, in this range, we have $\tilde{q}(H) \ge (1 + o(1)) \frac{np}{k^2}$ a.a.s., which yields the lower bounds for parts~\ref{cor:bounds:constanttreecomponents} and~\ref{cor:bounds:boundary} of Corollary~\ref{cor:bounds} (note that when $p = \Theta \left( n^{-\frac{k+1}{2k+1}} \right)$, we have $p \ll n^{-\frac{(k+1)+1}{2(k+1)+1}}$). The lower bound in part~\ref{cor:bounds:smalldegrees} follows by substituting the bound on $\lambda(F)$ from Corollary~\ref{cor:nhdgraphsmalltrees}\ref{cor:nhdgraphsmalltrees.regime_intermediate}, while the lower bound in part~\ref{cor:bounds:manyedges} is trivial.

For the upper bounds, Theorem~\ref{thm:main} gives $\tilde{q}(H) \le \min \set*{\frac{\delta(H)}{\Delta(F)}, \frac{\delta(H)^2}{2e(F)}}$. The upper bounds in parts~\ref{cor:bounds:constanttreecomponents},~\ref{cor:bounds:boundary}, and~\ref{cor:bounds:smalldegrees} come from substituting the appropriate lower bounds on $\Delta(F)$ given by Corollary~\ref{cor:nhdgraphsmalltrees}. When $p \gg n^{-\frac{k}{2k-1}}$ for some fixed $k$, Corollary~\ref{cor:nhdgraphsmalltrees}\ref{cor:nhdgraphsmalltrees.regime_degree1} yields $\Delta(F) \ge k-1$ a.a.s., which provides the upper bounds in parts~\ref{cor:bounds:constanttreecomponents} and~\ref{cor:bounds:boundary} of Corollary~\ref{cor:bounds}. The upper bound in part~\ref{cor:bounds:smalldegrees} follows similarly, using the lower bound on $\Delta(F)$ from Corollary~\ref{cor:nhdgraphsmalltrees}\ref{cor:nhdgraphsmalltrees.regime_degree2}. Finally, for the upper bound in part~\ref{cor:bounds:manyedges} of Corollary~\ref{cor:bounds}, we use Corollary~\ref{cor:nhdgraphsparse}, which asserts that a.a.s.~$e(F) \ge \frac{1}{16}n^2 p^3$. Thus $\frac{\delta(H)^2}{2e(F)} \le \frac{8 + o(1)}{p}$, as required.
\end{proof}

\section{Simplicity and abundance for $G(n,p)$} \label{sec:simplicity}

In this section we prove the lower bounds on $\tilde{q}(G(n,p))$ from Theorem~\ref{thm:main}. These are the positive results, showing that with high probability $H \sim G(n,p)$ is $q$-Ramsey simple for the appropriate values of $q$.

To begin, we observe that we have nothing new to prove in case~\ref{thm:main:forest}. By Lemma~\ref{lem:gnp_forest.regime} we know $H$ is a forest with high probability when $p \ll n^{-1}$. Szab\'o, Zumstein, and Z\"urcher~\cite{szabo2010minimum} proved that all forests are $2$-Ramsey simple, and their proof extends directly to show $q$-Ramsey simplicity for all $q \ge 3$ as well. For completeness, we provide the argument in Appendix~\ref{sec:forests}.

For the remaining cases, we will show that $H$ is typically such that one can construct a minimal $q$-Ramsey graph $G$ for $H$ with $\delta(G) = q(\delta(H) - 1) + 1$, provided, in case~\ref{thm:main:intermediatelowerbounds}, that $q$ is not too large. We first establish a general sufficient condition for the existence of such a graph $G$ in Section~\ref{sec:simpreduction}, and then show in Section~\ref{sec:simpconstruction} that it is satisfied with high probability by the random graph $H$. In Section~\ref{sec:simabundance} we shall extend these results by showing $H$ admits minimal Ramsey graphs with arbitrarily many vertices of degree $q (\delta(H) - 1) + 1$.

Before we start, we introduce a piece of notation we shall use throughout this section. Given a graph $\Gamma$ with a $q$-colouring $f: E(\Gamma) \rightarrow [q]$ and any colour $i\in [q]$, the colour-$i$ subgraph $\Gamma_i$ of $\Gamma$ is the graph $\Gamma_i = (V(\Gamma), f^{-1}(i))$ consisting of all edges of $\Gamma$ with the colour $i$.

\subsection{Reducing to the smallest neighbourhood} \label{sec:simpreduction}

In this subsection we shall show that when establishing the $q$-Ramsey simplicity of a well-behaved graph $H$ (recall Definition~\ref{def:wellbehaved}), we can focus our attention on the neighbourhood of the minimum degree vertex.

\begin{prop} \label{prop:nhdgraph}
Let $q \ge 2$, let $H$ be a well-behaved graph, and let $F = H[N(u)]$ be the subgraph induced by the neighbourhood of the unique minimum degree vertex $u$. Suppose there exists a $q$-edge-coloured graph $\Gamma$ on $q(\delta(H) - 1) + 1$ vertices such that:
\begin{enumerate}[label = (\roman*)]
	\item \label{prop:nhdgraph:Fcopies} for every set $U \subseteq V(\Gamma)$ of $\delta(H)$ vertices and for every colour $i \in [q]$, there exists a copy $F_{U,i}$ of $F$ in $\Gamma[U]$ whose edges are all of colour $i$, and
	\item \label{prop:nhdgraph:maxdeg} for each $i \in [q]$, the colour-$i$ subgraph $\Gamma_i$ of $\Gamma$ has maximum degree at most $\delta(H) - 1$.
\end{enumerate}
Then $H$ is $q$-Ramsey simple.
\end{prop}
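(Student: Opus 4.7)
The plan is to construct a minimal $q$-Ramsey graph $G$ for $H$ containing a distinguished apex vertex $v$ of degree exactly $m := q(\delta(H)-1)+1$; combined with the Fox--Lin lower bound $s_q(H) \ge m$, the existence of such $G$ witnesses $s_q(H) = m$ and hence the $q$-Ramsey simplicity of $H$.

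\emph{Construction.} Start with the $q$-edge-coloured graph $\Gamma$ on vertex set $W$ of size $m$. Add an apex vertex $v$ joined to every $w \in W$. For each pair $(U,i)$ with $U \in \binom{W}{\delta(H)}$ and $i \in [q]$, invoke hypothesis~\ref{prop:nhdgraph:Fcopies} to fix the monochromatic copy $F_{U,i}$ of $F$ in $\Gamma[U]$, then attach pairwise-disjoint fresh vertices $X_{U,i}$ together with the edges needed so that $\{v\} \cup U \cup X_{U,i}$ spans a copy $H_{U,i}$ of $H$ in which $v$ plays the role of $u$, $U$ plays the role of $N(u)$, and $H_{U,i}[U] = F_{U,i}$; colour every newly added edge of $H_{U,i}$ with colour $i$. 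Finally, attach to every edge of $\Gamma$ and every ``interior'' edge of every $H_{U,i}$ a signal-sender / determiner gadget, in the spirit of~\cite{burr1976graphs}, that forces the attached edge to retain its prescribed colour in every $H$-free $q$-colouring of $G$. The vertex $v$ has degree exactly $m$, while the well-behavedness of $H$ (in particular \ref{def:wb_3conn}) ensures that every other vertex ends up with degree at least $m$.

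\emph{Ramsey property.} Given a $q$-colouring $c$ of $G$, either some determiner already witnesses a monochromatic copy of $H$ and we are done, or every edge of $\Gamma$ and every interior edge of every $H_{U,i}$ takes its prescribed colour. In the latter case, applying the pigeonhole principle to the $m = q(\delta(H)-1)+1$ edges at $v$ yields a colour $i \in [q]$ and a set $U \subseteq W$ of size $\delta(H)$ with all edges from $v$ into $U$ coloured $i$ under $c$; then $H_{U,i}$ is a monochromatic copy of $H$ in colour $i$.

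\emph{Minimality.} For each edge $e$ of $G$ we must exhibit an $H$-free $q$-colouring of $G-e$. When $e$ lies inside a determiner, use the minimality of the gadget itself; when $e \in E(\Gamma)$ or $e$ is an interior edge of some $H_{U,i}$, use the ``off-signal'' colouring of the gadget attached to $e$. The delicate case is $e = vw$ with $w \in W$: after removing this edge, $v$ has only $q(\delta(H)-1)$ incident edges, so we can distribute them among the $q$ colour classes with at most $\delta(H)-1$ edges of each colour at $v$. Condition~\ref{prop:nhdgraph:maxdeg} is precisely what prevents any prescribed $H_{U,i}$ through $v$ from being monochromatically completed under such a colouring, and the well-behavedness properties \ref{def:wb_pair_degree}--\ref{def:wb_components} of $H$ preclude unintended monochromatic copies of $H$ that use some $X_{U',i'}$-vertex in place of $u$. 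This last point---ruling out spurious monochromatic copies of $H$ that ``hijack'' a gadget vertex as the minimum-degree vertex---is the main obstacle, and is where the codegree bound, the $3$-connectivity, and the no-medium-component hypothesis of well-behavedness each earn their keep.
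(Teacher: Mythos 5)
Your construction is essentially the same as the paper's (an apex vertex $v$ over $\Gamma$, completions $X_{U,i}$ of $F_{U,i}$ to copies of $H$, and signal senders to rigidify the colouring), and your proof of the Ramsey property is correct. The genuine gap is in the minimality step: you assert that the constructed graph $G$ \emph{itself} is a minimal $q$-Ramsey graph, and then try to verify $G-e \not\rightarrow_q H$ for every edge $e$. For edges inside the gadgets you wave at ``the minimality of the gadget itself'' and at ``off-signal colourings,'' but signal senders are not defined to be edge-critical: removing a single interior edge of a signal sender need not destroy its signal-forcing property, so there is no reason why $G-e$ should admit an $H$-free colouring for such $e$. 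Indeed, the graph $G$ you build is almost certainly \emph{not} minimal, and the paper never claims otherwise.

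The paper instead takes an indirect route: it only proves that $G \rightarrow_q H$ and that $G-w \not\rightarrow_q H$ (the latter being a single $H$-free colouring, namely the prescribed colouring $c$ on $\Gamma^+$ extended to the gadgets, which is much easier to produce than a colouring of $G-e$ for every internal gadget edge $e$). It then passes to \emph{some} minimal $q$-Ramsey subgraph $G' \subseteq G$, which must exist since $G$ is Ramsey; by $G - w \not\rightarrow_q H$ this $G'$ contains $w$, whence $\delta(G') \le d_{G'}(w) \le d_G(w) = q(\delta(H)-1)+1$, and the Fox--Lin lower bound turns this into equality. Your argument for the edge $e = vw$ is the morally correct ingredient (showing that the low-degree vertex is ``used'' in any Ramsey certificate), but you should upgrade it to deleting the whole vertex $v$, and you should drop the claim that $G$ is minimal. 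A secondary point: your safety argument (that spurious copies of $H$ cannot straddle a gadget and the rest of the graph, nor hijack an $X_{U',i'}$-vertex as the minimum-degree vertex) is asserted but not proved; in the paper this relies on a lemma saying that if the signal edges of a $3$-connected signal sender are at distance at least $v(H)$, then any copy of $H$ lies entirely inside the gadget or entirely outside it, plus a careful use of the codegree and no-medium-component properties of well-behavedness, and you would need to carry out those estimates rather than just cite the properties by name.
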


This proposition provides a sufficient condition: to establish the $q$-Ramsey simplicity of a well-behaved graph, one need only construct the coloured graph $\Gamma$. Before proceeding with its proof, we remark that the condition is very close to being necessary as well.

\begin{rem} \label{rem:nhdgraphnecessary}
Let $H$ be $q$-Ramsey simple with a unique vertex $u$ of minimum degree, and let $G$ be a minimal $q$-Ramsey graph for $H$ with a vertex $w$ of degree $q(\delta(H) - 1) + 1$. Let $\Gamma = G[N(w)]$ be the subgraph of $G$ induced by the neighbourhood of $w$. By minimality, there is a $q$-colouring $c$ of $G - w$, and in particular of $\Gamma$, without any monochromatic copies of $H$. 

Since $G$ itself is $q$-Ramsey for $H$, no matter how we extend the colouring $c$ to the edges incident to $w$, we must create a monochromatic copy of $H$. Given any subset $U$ of $\delta(H)$ vertices in $\Gamma$ and any colour $i \in [q]$, colour the edges from $w$ to $U$ with colour $i$, and colour the remaining edges incident to $w$ evenly with the other colours, so that each is used $\delta(H) - 1$ times. Any monochromatic copy of $H$ must involve at least $\delta(H)$ edges incident to $w$, and hence must be of colour $i$ and contain all the vertices in $U$. As $w$ has degree $\delta(H)$ in this monochromatic subgraph, it must play the role of $u$ in $H$, and therefore we must find a colour-$i$ copy of $F$ in $\Gamma[U]$.

Thus, if $H$ is $q$-Ramsey simple, there must exist a $q$-coloured graph $\Gamma$ on $q(\delta(H) - 1)+1$ vertices satisfying property~\ref{prop:nhdgraph:Fcopies} of Proposition~\ref{prop:nhdgraph}. While the well-behavedness of $H$ and property~\ref{prop:nhdgraph:maxdeg} may not be necessary, they shall enable us to maintain control over potential copies of $H$ when constructing the minimal $q$-Ramsey graph $G$.
\end{rem}

Given the graph $\Gamma$, when we build from it a $q$-Ramsey graph $G$ we shall, as is common practice in the field, make extensive use of signal senders, which are gadgets that allow us to prescribe colour patterns on the edges of a graph.

\begin{definition}[Signal senders] \label{def:signalsenders}
Given a graph $H$, a number of colours $q \ge 2$, a distance $d \ge 1$, and two edges $e$ and $f$, a \emph{positive (or negative) signal sender} $S^+(H,q,d,e,f)$ (or $S^-(H,q,d,e,f)$) is a graph $S$ that contains $e$ and $f$ and satisfies:
\begin{enumerate}[label = (\roman*)]
	\item \label{def:signalsenders:Hfree} $S$ can be $q$-coloured without monochromatic copies of $H$,
	\item \label{def:signalsenders:signaledges} in any such colouring, $e$ and $f$ have the same (or different) colour(s), and
	\item \label{def:signalsenders:distance} the edges $e$ and $f$ are at distance at least $d$ in $S$.
\end{enumerate}
The edges $e$ and $f$ are called the \emph{signal edges}.
\end{definition}

Fortunately for us, signal senders exist for all $3$-connected graphs, as shown by R\"odl and Siggers~\cite{rodl2008ramseyminimal}, building on earlier work of Burr, Erd\H{o}s, and Lov\'asz~\cite{burr1976graphs} and Burr, Ne\v{s}et\v{r}il, and R\"{o}dl~\cite{burr1985useofsenders}.

\begin{theorem}[\cite{rodl2008ramseyminimal}] \label{thm:signalsenders}
If $H$ is $3$-connected, then for any $q \ge 2$ and $d \ge 1$, there are positive and negative signal senders $S^+(H,q,d,e,f)$ and $S^-(H,q,d,e,f)$.
\end{theorem}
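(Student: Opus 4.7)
The plan is to build up signal senders by first constructing small \emph{indicators} (graphs that constrain the colour of a distinguished edge), then combining them into low-distance positive and negative senders, and finally concatenating many copies to boost the distance between the signal edges to the required $d$. This follows the classical Burr--Erd\H{o}s--Lov\'asz scheme, adapted by R\"odl and Siggers to cover all $3$-connected targets.

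First, I would produce an indicator. Pick any minimal $q$-Ramsey graph $J$ for $H$ (such a graph exists by Ramsey's theorem together with minimality) and any edge $e_0 \in E(J)$. Minimality guarantees that $J - e_0$ has an $H$-free $q$-colouring, but no such colouring extends to $J$, so in every $H$-free $q$-colouring of $J$ the edge $e_0$ is forced into a restricted subset of colours. By iterating this with carefully chosen critical graphs and using the symmetry of the colour palette, one can manufacture, for each colour $i$, a gadget $I_i$ containing a distinguished edge whose colour is constrained to avoid $i$.

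Next, I would assemble short-distance senders from indicators. A positive signal sender of small distance is obtained by gluing two indicator-type gadgets across pairs of edges so that, if one signal edge receives colour $i$, then so must the other (for instance, by attaching around both signal edges simultaneously copies of $I_j$ for every $j \neq i$). A negative sender is built analogously, enforcing that each colour permitted at $e$ is forbidden at $f$. The distance between signal edges can then be amplified by chaining: concatenate many short positive senders along a path, identifying the $f$-edge of one with the $e$-edge of the next. By transitivity the outermost edges of the chain are forced to share a colour in any $H$-free colouring, and the chain can be made internally long enough to push them to distance at least $d$. A long negative sender is then a long positive sender spliced to a single short negative one.

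The crux of the proof, and the main obstacle, is verifying that each gluing step preserves property~\ref{def:signalsenders:Hfree} of Definition~\ref{def:signalsenders}: the combined graph must still admit \emph{some} $H$-free $q$-colouring. The danger is that a copy of $H$ might straddle the seam between two glued gadgets. Since each identification is along a single edge $xy$, any such crossing copy of $H$ would have to use $\{x,y\}$ as its only transit vertices between the two sides; but the $3$-connectedness of $H$ forbids separating the remainder of $H$ by removing any two vertices, so no such split copy can exist. Combining this cut-based argument with the compatibility of the individual indicators' $H$-free colourings (which can be pasted together along the identified edge by choosing the common colour appropriately) yields a valid $H$-free $q$-colouring of the whole construction. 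The distance condition~\ref{def:signalsenders:distance} is arranged directly through the length of the chain, and condition~\ref{def:signalsenders:signaledges} follows from the transitivity of the forced colour equalities (respectively inequalities) built into the successive links.
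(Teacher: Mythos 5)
This statement is quoted by the paper from R\"odl and Siggers~\cite{rodl2008ramseyminimal}; the paper gives no proof of it, so your sketch is measured against the cited original rather than anything internal. Your outer layers (gluing gadgets along a shared edge, observing that a copy of $H$ cannot straddle a two-vertex seam because $H$ is $3$-connected, chaining positive senders to boost the distance to $d$, and splicing one negative link into a long positive chain) are standard and essentially sound, and they mirror how long-distance senders are indeed obtained from short ones.

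The genuine gap is at the base of the construction: the existence of the indicator gadgets $I_i$ is asserted, not proved, and the argument you offer for it does not work. A minimal $q$-Ramsey graph $J$ for $H$ admits \emph{no} $H$-free $q$-colouring at all, so the sentence ``in every $H$-free $q$-colouring of $J$ the edge $e_0$ is forced into a restricted subset of colours'' is vacuous; and the $H$-free colourings of $J-e_0$ place no constraint on the colour of $e_0$, since that edge is absent. What an indicator must satisfy is much stronger: it must itself admit an $H$-free colouring (property~\ref{def:signalsenders:Hfree} of Definition~\ref{def:signalsenders}) while in \emph{every} such colouring the distinguished edge avoids a prescribed colour, and nothing in the minimality of $J$ produces such a graph for an arbitrary $3$-connected $H$ and arbitrary $q$. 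Constructing these base gadgets is precisely the content of the theorem: Burr, Erd\H{o}s, and Lov\'asz~\cite{burr1976graphs} could do it for cliques using their special structure, Burr, Ne\v{s}et\v{r}il, and R\"odl~\cite{burr1985useofsenders} extended it with two colours, and R\"odl and Siggers~\cite{rodl2008ramseyminimal} need a lengthy partite-amalgamation argument to handle all $3$-connected $H$ and all $q\ge 2$. ``By iterating this with carefully chosen critical graphs and using the symmetry of the colour palette'' replaces that core argument with a placeholder, so the proposal does not constitute a proof of the theorem, only a reduction of it to its hardest step.
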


The utility of signal senders lies in the ability to force pairs of edges in an $H$-free colouring of a graph $G$ to have the same (or different, in the negative case) colours. This is achieved through the process of \emph{attachment}; given a graph $G$ and a pair of distinct edges $h_1, h_2 \in E(G)$, we attach to $G$ a signal sender $S^+(H,q,d,e,f)$ (or $S^-(H,q,d,e,f)$), defined on a disjoint set of vertices, between $h_1$ and $h_2$ by identifying the signal edges $e$ and $f \in E(S)$ with the edges $h_1$ and $h_2 \in E(G)$. In this next result, we show that attachment cannot create unexpected copies of our target graph $H$, provided that the signal edges are sufficiently far apart.

\begin{lemma}\label{lem:safety}
Let $q\geq 2$, let $H$ be any 3-connected graph, and let $d \ge v(H)$. Let $S = S^+(H,q,d,e,f)$ or $S = S^-(H,q,d,e,f)$ be a signal sender and let $G$ be any graph on a disjoint set of vertices. If the graph $G'$ is formed by attaching $S$ to any two distinct edges of $G$, then, for any copy $H_0$ of $H$ in $G'$, we have either $V(H_0)\subseteq V(G)$ or $V(H_0)\subseteq V(S)$.
\end{lemma}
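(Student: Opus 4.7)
The plan is a proof by contradiction. I would assume there is a copy $H_0$ of $H$ in $G'$ having vertices both in $V(G)\setminus V(S)$ and in $V(S)\setminus V(G)$, and derive a contradiction with the distance hypothesis. Write $W = V(G)\cap V(S)$; by construction $W = V(e)\cup V(f)$, and since the hypothesis $\dist_S(e,f)\ge d\ge v(H)\ge 4$ forces $e$ and $f$ to be vertex-disjoint, we have $|W|=4$. Set $A_0 = V(H_0)\cap (V(G)\setminus V(S))$, $B_0 = V(H_0)\cap (V(S)\setminus V(G))$, both nonempty by assumption, and $W_0 = V(H_0)\cap W$. Because no edge of $G'$ joins $V(G)\setminus V(S)$ to $V(S)\setminus V(G)$, the set $W_0$ is a vertex cut of $H_0$ that separates $A_0$ from $B_0$.

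Next, I would invoke the $3$-connectivity of $H$ (and hence of $H_0$) to get $|W_0|\ge 3$; since $|W|=4$ and each of $V(e),V(f)$ has size $2$, this forces $W_0$ to meet both $V(e)$ and $V(f)$. Fix $a\in A_0$ and $b\in B_0$. These two vertices are non-adjacent in $H_0$, so Menger's theorem yields three internally vertex-disjoint $a$-$b$ paths $P_1,P_2,P_3$ in $H_0$. Each $P_i$ must cross $W_0$, and internal disjointness makes the crossing vertices distinct. For each $i$, let $x_i$ be the \emph{last} vertex of $W$ that $P_i$ visits before reaching $b$, and let $Q_i$ be the suffix of $P_i$ from $x_i$ to $b$. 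A short check shows $V(Q_i)\subseteq \{x_i\}\cup B_0\subseteq V(S)$, because once $P_i$ leaves $W$ for the last time it cannot re-enter $V(G)\setminus V(S)$ without first re-entering $W$; and every edge of $Q_i$ lies in $S$, since any edge incident to a vertex of $B_0$ cannot be an edge of $G$. So each $Q_i$ is a genuine path in $S$ from $x_i$ to $b$.

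The finish is a pigeonhole plus walk-concatenation. The three distinct vertices $x_1,x_2,x_3\in W_0\subseteq V(e)\cup V(f)$ cannot all lie on the same side (since $|V(e)|=|V(f)|=2$), so after relabelling we may assume $x_1\in V(e)$ and $x_2\in V(f)$. Concatenating $Q_1$ reversed with $Q_2$ produces a walk in $S$ from $x_1$ to $x_2$ of length $|Q_1|+|Q_2|$. Since $Q_1$ and $Q_2$ share only the endpoint $b$ and their union uses at most $v(H_0)=v(H)$ vertices of $H_0$, this length is at most $v(H)-1<d$. Extracting a path from this walk yields $\dist_S(V(e),V(f)) < d$, contradicting the distance hypothesis $\dist_S(e,f)\ge d$ and completing the proof.

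The main point requiring care is the bookkeeping around the suffixes $Q_i$: checking that the three crossing points $x_i$ are truly distinct elements of $W_0$, that each $Q_i$ really lives in $S$ rather than secretly using a $G$-edge, and that among the $x_i$ at least one lies in $V(e)$ and another in $V(f)$. Once this is in hand, the walk-concatenation argument and the contradiction with the distance bound fall out immediately.
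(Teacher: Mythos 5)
Your proof is correct and follows essentially the same route as the paper: by $3$-connectivity you take three internally disjoint paths between a vertex only in $S$ and a vertex only in $G$, pigeonhole their crossings over the four endpoints of the signal edges to find one crossing at $e$ and one at $f$, and concatenate the portions inside $S$ to obtain a path of length at most $v(H)-1<d$ between $e$ and $f$, contradicting the distance property. Your extra bookkeeping (the separator $W_0$, Menger, the suffixes $Q_i$) just makes explicit what the paper's terser argument leaves implicit.
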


\begin{proof}
Let $H_0$ be a copy of $H$ in $G$ and suppose for the sake of contradiction that $H_0$ is fully contained neither in $G$ nor in $S$. We can then find vertices $x \in V(H_0) \cap \left( V(S) \setminus V(G) \right)$ and $y \in V(H_0) \cap \left( V(G) \setminus V(S) \right)$. Now, by $3$-connectivity, $H_0$ contains three internally-vertex-disjoint paths between $x$ and $y$.

Since $V(S) \cap V(G) = e \cup f$, each of these paths must pass through a distinct endpoint of one of the signal edges $e$ and $f$. There must be one path meeting $e$ and another meeting $f$, and the portions of these paths that lie within the signal sender contain a path from $e$ to $f$ within $V(H_0) \cap V(S)$. However, this contradicts $e$ and $f$ being at distance $d \ge v(H)$.
\end{proof}

Armed with these preliminaries, we can now prove Proposition~\ref{prop:nhdgraph}.

\begin{proof}[Proof of Proposition~\ref{prop:nhdgraph}]
We shall take a slightly indirect route to certifying the $q$-Ramsey simplicity of $H$. Rather than constructing a minimal $q$-Ramsey  graph with minimum degree $q(\delta(H) - 1)+1$, we will instead build a graph $G$ such that:
\begin{itemize}
	\item[(a)] $G \rightarrow_q H$,
	\item[(b)] $G$ has a vertex $w$ of degree $q(\delta(H) - 1) + 1$, and
	\item[(c)] $G - w \not\rightarrow_q H$.
\end{itemize}
Since $G$ is $q$-Ramsey for $H$, it must contain a minimal $q$-Ramsey subgraph $G' \subseteq G$. By virtue of (c), we have $w \in V(G')$, and hence $\delta(G') \le d_{G'}(w) \le d_G(w) = q(\delta(H) - 1)+1$. In light of the general lower bound, we must in fact have equality, and hence $G'$ bears witness to the $q$-Ramsey simplicity of $H$.

\smallskip

To construct this $q$-Ramsey graph $G$, we start with the graph $\Gamma$. Recall that, for each set $U$ of $\delta(H)$ vertices of $\Gamma$ and for each colour $i \in [q]$, there is a colour-$i$ copy $F_{U,i}$ of $F$ in $\Gamma[U]$. We will wish to complete these to potential monochromatic copies of $H$. To this end, let $R = H - \left(\set{u} \cup N(u) \right)$ be the remainder of $H$ after we remove the minimum degree vertex $u$ and its neighbourhood. Then, for every $U$ and $i$, we include a copy $R_{U,i}$ of $R$ on a disjoint set of vertices, adding the necessary edges so that $R_{U,i} \cup F_{U,i}$ forms a copy of $H - u$. We call the resulting graph $\Gamma^+$.

Now recall that the graph $\Gamma$ comes with an edge-colouring, which we extend by colouring the edges in $R_{U,i}$ and between $R_{U,i}$ and $F_{U,i}$ with the colour $i$. Denote by $c$ the resulting colouring of $\Gamma^+$. To force the correct colouring, we shall use signal senders. Note that, since $H$ is well-behaved, property~\ref{def:wb_3conn} ensures $H$ is $3$-connected, and hence by Theorem~\ref{thm:signalsenders} positive and negative signal senders exist. 

We introduce a matching $e_1, e_2, \hdots, e_q$ of $q$ edges, again on a set of new vertices. For every pair $i < j$, we attach a negative signal sender $S_{i,j} = S^-(H,q,v(H),e_i,e_j)$ between $e_i$ and $e_j$. As we shall see later, this will ensure that these edges all receive distinct colours in an $H$-free colouring. Now, for every edge $f$ in $\Gamma^+$, we attach a positive signal sender $S_f = S^+(H,q,v(H),e_{c(f)},f)$ between $e_{c(f)}$ and $f$. Finally, we introduce a new vertex $w$ and make it adjacent to every vertex in $\Gamma$. This completes our construction of the graph $G$, which is depicted in Figure~\ref{fig:construction-G}.

\begin{figure}[h!]
    \centering
    \includegraphics[scale=0.6,page=1]{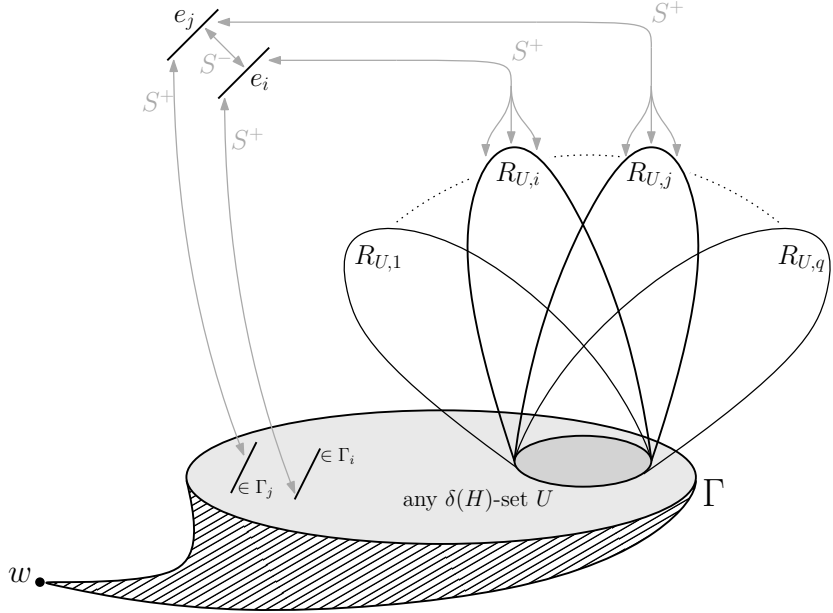}
    \caption{Construction of $G$}
    \label{fig:construction-G}
\end{figure}

Observe that $d_G(w) = v(\Gamma) = q(\delta(H) - 1) + 1$, and so condition (b) is already satisfied. We shall now verify conditions (a) and (c) in the following claims.

\begin{clm} \label{clm:nhdgraphisramsey}
The graph $G$ is $q$-Ramsey for $H$.
\end{clm}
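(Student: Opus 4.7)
The plan is to argue by contradiction: suppose $\chi$ is a $q$-coloring of $G$ with no monochromatic copy of $H$, and derive a contradiction. The first step is to use the signal senders to show that $\chi$ must essentially equal $c$ on $\Gamma^+$, up to a permutation of the colors; the second step is to exploit the extra vertex $w$ via pigeonhole to locate a monochromatic $H$.

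To set up the first step, I would observe that since $\chi$ has no monochromatic $H$ on all of $G$, its restriction to every signal sender $S$ is automatically $H$-free on $S$, so the defining property of $S$ applies. The negative senders $S_{i,j}$ attached to the matching edges $e_1,\ldots,e_q$ then force these edges to receive pairwise distinct colors under $\chi$, and after relabeling we may assume $\chi(e_i)=i$ for every $i\in[q]$. Next, for each edge $f\in E(\Gamma^+)$, the positive sender $S_f$ attached between $e_{c(f)}$ and $f$ forces $\chi(f)=\chi(e_{c(f)})=c(f)$. Thus $\chi$ restricted to $\Gamma^+$ agrees with $c$, and in particular every edge of the copy $R_{U,i}$, as well as every edge between $R_{U,i}$ and $F_{U,i}$, carries the color $i$.

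For the second step, note that $w$ has degree $q(\delta(H)-1)+1$ in $G$, so by pigeonhole some color $i\in[q]$ appears on at least $\delta(H)$ of the edges from $w$ into $V(\Gamma)$; let $U\subseteq V(\Gamma)$ be any set of $\delta(H)$ such neighbors. By hypothesis~\ref{prop:nhdgraph:Fcopies} of the proposition, $\Gamma[U]$ contains a color-$i$ copy $F_{U,i}$ of $F$ under $c$, and therefore also under $\chi$. Combined with $R_{U,i}$, which is monochromatic in color $i$, this yields a color-$i$ copy of $H-u$, and adjoining $w$ (joined to every vertex of $U$ by a color-$i$ edge) produces a monochromatic copy of $H$, contradicting the choice of $\chi$.

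The only subtlety I anticipate is the following bookkeeping point: one must be careful that signal senders truly transmit their constraints inside the large graph $G$, where they share their signal edges with $\Gamma^+$ or with the matching. This is resolved by the trivial implication ``any monochromatic $H$ in $\chi|_S$ is also a monochromatic $H$ in $\chi$,'' so no deeper structural fact is required here; Lemma~\ref{lem:safety} and the distance parameter $d=v(H)$ chosen in each sender will instead be needed in the companion claim showing $G-w\not\to_q H$, where one must rule out spurious copies of $H$ crossing between a sender and the base graph. Property~\ref{prop:nhdgraph:maxdeg} is likewise not used in the present claim but will be essential there, to certify that $c$ together with a balanced extension to the edges at $w$ remains $H$-free.
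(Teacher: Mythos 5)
Your proof is correct and follows essentially the same route as the paper's: use the negative senders to force distinct colours on the matching, the positive senders to pin down $c$ on $\Gamma^+$, and then pigeonhole at $w$ to locate a monochromatic $F_{U,i}$ which extends to a copy of $H$. Your closing remarks correctly identify that Lemma~\ref{lem:safety} and property~\ref{prop:nhdgraph:maxdeg} are reserved for Claim~\ref{clm:nhdgraphisnotramsey}, matching the paper's division of labour.
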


\begin{proof}
Suppose for a contradiction that we have an $H$-free $q$-colouring of $G$. First observe that, by Definition~\ref{def:signalsenders}\ref{def:signalsenders:signaledges}, if the signal sender $S_{i,j}$ is $H$-free, then the edges $e_i$ and $e_j$ must receive different colours. As this is true for each pair $i < j$, we may, relabelling colours if necessary, assume that each edge $e_i$ receives colour $i$.

Next, for each edge $f$ in $\Gamma^+$, consider the signal sender $S_f$. If this does not contain a monochromatic copy of $H$, then $e_{c(f)}$ and $f$ must have the same colour, and thus $f$ receives the colour $c(f)$. Hence we have forced the desired colouring on $\Gamma^+$.

This brings us to the vertex $w$. Since it has degree $q(\delta(H) - 1) + 1$, there must be some colour $i$ and a set $U \subseteq V(\Gamma)$ of size $\delta(H)$ such that the edges between $w$ and $U$ are all of colour $i$. However, appealing to condition~\ref{prop:nhdgraph:Fcopies} of Proposition~\ref{prop:nhdgraph}, we find a colour-$i$ copy $F_{U,i}$ of $F$ in $\Gamma[U]$, which we can complete to a copy of $H$ by attaching $w$ and $R_{U,i}$, contradicting our supposition. 
\end{proof}

\begin{clm} \label{clm:nhdgraphisnotramsey}
The graph $G - w$ is not $q$-Ramsey for $H$.
\end{clm}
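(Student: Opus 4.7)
The plan is to exhibit an $H$-free $q$-colouring of $G - w$ that extends the colouring $c$ on $\Gamma^+$. First I would assign colour $i$ to each matching edge $e_i$; with this, the prescribed signal-edge colours agree on each positive sender $S_f$ (both endpoints coloured $c(f)$) and disagree on each negative sender $S_{i,j}$. By Definition~\ref{def:signalsenders}\ref{def:signalsenders:Hfree}--\ref{def:signalsenders:signaledges} (relabelling colours within a sender as needed, which is possible by the permutation-symmetry of the defining property), each sender then admits an $H$-free extension agreeing with the prescribed colours on its signal edges.

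Next I would verify that no monochromatic copy of $H$ arises. Each sender was attached with $d = v(H)$, so Lemma~\ref{lem:safety} applies iteratively as the senders are glued on; it follows that any copy of $H$ in $G - w$ lies entirely in $\Gamma^+$, in a single $S_{i,j}$, or in a single $S_f$. Copies inside senders are excluded by our choice of colouring, so the task reduces to showing that $\Gamma^+$ contains no monochromatic copy of $H$ under $c$.

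For each colour $i$, the colour-$i$ subgraph $\Gamma^+_i$ is $\Gamma_i$ (of maximum degree at most $\delta(H) - 1$ by condition~\ref{prop:nhdgraph:maxdeg}) together with, for each $\delta(H)$-set $U \subseteq V(\Gamma)$, the edges of the designated copy of $H - u$ supported on $U \cup V(R_{U,i})$, where the pieces $V(R_{U,i})$ are pairwise disjoint and attached to $V(\Gamma)$ only through $U$. Suppose for contradiction some monochromatic $H_0 \subseteq \Gamma^+_i$ exists, and let $u_0$ be its unique minimum-degree vertex (well defined by property~\ref{def:wb_min_unique}), with neighbourhood $N_0$. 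If $u_0 \in V(\Gamma)$ and $N_0 \subseteq V(\Gamma)$, then $|N_0| \leq \delta(H) - 1 < |N_0|$, a contradiction. If instead $u_0 \in V(R_{U,i})$ with $N_0 \subseteq V(\Gamma)$, then $N_0 = U$, and via the fixed isomorphism $\varphi_U$ the role $v^* \in V(R)$ of $u_0$ is adjacent to every vertex of $N_H(u)$, yielding vertices $u, v^*$ of $H$ with codegree $\delta(H)$ and contradicting property~\ref{def:wb_pair_degree}.

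The remaining case is a mixed $H_0$ that uses vertices from some $V(R_{U,i})$ together with vertices outside $U \cup V(R_{U,i})$. Here $U \cap V(H_0)$ separates $V(H_0) \cap V(R_{U,i})$ from the rest of $V(H_0)$ in $H_0$, so property~\ref{def:wb_3conn} forces $|U \cap V(H_0)| \geq 3$, while property~\ref{def:wb_components} constrains the size of $V(H_0) \cap V(R_{U,i})$. A careful accounting -- applying property~\ref{def:wb_pair_degree} to the image of $N_0$ under $\varphi_U$ and invoking the max-degree bound on $\Gamma_i$ for the $V(\Gamma)$-portion of $N_0$ -- rules out every such mixed configuration. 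The main obstacle is precisely this last sub-case: bookkeeping how a monochromatic copy $H_0$ might straddle $V(\Gamma)$ and one or several of the disjoint $V(R_{U',i})$ slices, and using the interaction of the codegree, component-size, and connectivity constraints on the well-behaved target $H$ to eliminate each arrangement.
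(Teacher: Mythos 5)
Your setup is correct: the colouring, the agreement/disagreement of signal-edge colours with the senders, and the reduction via Lemma~\ref{lem:safety} to showing $\Gamma^+$ contains no monochromatic $H$. Your two ``clean'' subcases (when the unique minimum-degree vertex $u_0$ of $H_0$ has its entire neighbourhood inside $V(\Gamma)$) are also handled correctly: the first uses property~\ref{prop:nhdgraph:maxdeg}, the second uses the codegree bound~\ref{def:wb_pair_degree}, and both arguments go through.

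However, there is a genuine gap. The subcase you concede to be ``the main obstacle'' --- a monochromatic copy $H_0$ straddling $V(\Gamma)$ and one or more slices $V(R_{U',i})$ --- is precisely the heart of the claim, and you do not actually carry it out. You list the properties you would invoke and assert that ``a careful accounting\ldots rules out every such mixed configuration,'' but no accounting is given. Until that case is settled, the proof is incomplete, and it is not obvious how to close it with your decomposition by the position of $u_0$: nothing prevents $u_0$ from lying in $V(\Gamma)$ while the remainder of $H_0$ borrows vertices from several $R_{U',i}$'s, and the isomorphism $\varphi_U$ and codegree bound you invoke are tied to a single slice.

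The paper avoids this issue by a different decomposition. It does not track $u_0$ at all; instead it first proves that $H_0$ can meet \emph{at most one} slice $R_{U,i}$. To do so, it supposes $H_0$ intersects $R_{U,i}$ and $R_{U',i}$, takes the side of smaller intersection (WLOG at most $\tfrac12 n$), observes that $U$ is a cut-set for $H_0$, and then combines property~\ref{def:wb_pair_degree} (any $x\in V(H_0)\cap V(R_{U,i})$ has at most $\tfrac12\delta(H)$ neighbours in $U$ inside the designated copy $H_{U,i}$, hence at least $\tfrac12\delta(H)$ neighbours in $H_0-U$) with property~\ref{def:wb_components} (removing $\delta(H)$ vertices cannot create a component of size in $[\tfrac12\delta(H),\tfrac12 n]$) to derive a contradiction. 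Once ``at most one slice'' is known, property~\ref{prop:nhdgraph:maxdeg} forces every $\Gamma$-vertex of $H_0$ to lie in $U$, so $V(H_0)\subseteq U\cup V(R_{U,i})$, which has only $n-1$ vertices. That is the cleaner two-step structure you should aim for, rather than a case split on $u_0$.

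A minor but worth noting point: you should also check that your two ``clean'' subcases together with the mixed case are exhaustive as stated --- e.g.\ the case $u_0\in V(\Gamma)$ with $N_0\not\subseteq V(\Gamma)$ also needs to be covered, and it falls under ``mixed,'' so closing that gap is unavoidable.
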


\begin{proof}
We provide an $H$-free $q$-colouring of $G - w$. To start, we give $\Gamma^+$ the colouring $c$, and, for each $i \in [q]$, colour the edge $e_i$ of the matching with the colour $i$. Observe that, under this colouring, the signal edges of each positive signal sender $S_f$ in $G$ have the same colour, while those of negative signal senders $S_{i,j}$ receive different colours. By Definition~\ref{def:signalsenders} we can find an $H$-free colouring of each signal sender that agrees with the colouring of the signal edges. We use these to extend our colouring to the signal senders as well, thereby obtaining a $q$-colouring of $G - w$.

Now suppose for a contradiction that this colouring gives rise to a colour-$i$ copy $H_0$ of $H$ for some $i \in [q]$. First, observe that it follows from Lemma~\ref{lem:safety} that $H_0$ either is fully contained in a signal sender or is contained in $\Gamma^+ \cup \{ e_i : i \in [q] \}$. Since the signal senders were coloured without monochromatic copies of $H$, and the edges $\{e_i: i \in [q]\}$ are isolated in the latter graph, we need only show that we cannot have $H_0 \subseteq \Gamma^+$.

We next claim that $H_0$ can only meet at most one subgraph $R_{U,i}$. Indeed, suppose instead that there are two sets $U$ and $U'$ such that $V(H_0) \cap V(R_{U,i})$ and $V(H_0) \cap V(R_{U',i})$ are both nonempty. As the sets $V(R_{U,i})$ and $V(R_{U',i})$ are disjoint, we may assume without loss of generality that $\card{V(H_0) \cap V(R_{U,i})} \le \tfrac12 n$. 

Since $R_{U,i}$ is only attached to $\Gamma$ through the vertices in $U$, the set $U$ must be a cut-set for the subgraph $H_0$. Let $x \in V(H_0) \cap V(R_{U,i})$ be an arbitrary vertex, and let $K$ be the component of $x$ in $H_0 - U$. We clearly have $\card{K} \le \card{V(H_0) \cap V(R_{U,i})} \le \tfrac12 n$.

On the other hand, observe that $x$ is also in the copy $H_{U,i}$ of $H$ supported on $\{w\} \cup V(F_{U,i}) \cup V(R_{U,i})$. In $H_{U,i}$, the set $U$ is the neighbourhood of $w$, and, since $H$ is well-behaved, condition~\ref{def:wb_pair_degree} implies $x$ has at most $\tfrac12 \delta(H)$ neighbours in $U$. As $d_{H_0}(x) \ge \delta(H)$, this means $x$ must have at least $\tfrac12 \delta(H)$ neighbours in $H_0 - U$. Hence, we also have $\card{K} \ge \tfrac12 \delta(H)$. However, this contradicts condition~\ref{def:wb_components}, as the removal of the $\delta(H)$ vertices in $U$ cannot create a component in $H_0$ of size between $\tfrac12 \delta(H)$ and $\tfrac12 n$.

Thus, $H_0$ meets at most one subgraph $R_{U,i}$. Now, by property (ii) of the colouring $c$ of $\Gamma$, we have that any vertex is incident to fewer than $\delta(H)$ edges of colour $i$ in $\Gamma$. Thus, in order to be part of $H_0$, a vertex from $\Gamma$ must have neighbours in $R_{U,i}$ as well. However, the only such vertices are those in $U$, and since $\card{U \cup V(R_{U,i})} = n-1$, this does not leave us with enough vertices for a copy of $H$.

Our colouring is therefore indeed $H$-free, thereby proving the claim.
\end{proof}

This shows that the graph $G$ satisfies conditions (a), (b), and (c), completing the proof.
\end{proof}

\subsection{Constructing coloured neighbourhoods} \label{sec:simpconstruction}

The path to proving the lower bounds of Theorem~\ref{thm:main} is now clearly signposted. By Lemma~\ref{lem:wellbehaved}, we know that when $\frac{\log n}{n} \ll p \ll 1$, the random graph $H \sim G(n,p)$ is well-behaved with high probability, and hence we are in position to apply Proposition~\ref{prop:nhdgraph}. We shall then use the results of Section~\ref{sec:gnpproperties} to describe the subgraph $F$ induced by the minimum degree vertex in $H$. This subgraph evolves as the edge probability $p$ increases, and in each range we will construct an appropriate coloured graph $\Gamma$ that satisfies the conditions of the proposition.

We start with the sparse range, where $p \ll n^{-\frac23}$.

\begin{proof}[Proof of Theorem~\ref{thm:main}\ref{thm:main:alwayssimple}]
Let $q \ge 2$, let $p$ satisfy $\frac{\log n}{n} \ll p \ll n^{-\frac23}$, and let $H \sim G(n,p)$. By Lemma~\ref{lem:wellbehaved} and Corollary~\ref{cor:nhdgraphempty}, we have with high probability that $H$ is well-behaved and the subgraph $F = H[N(u)]$ induced by the neighbourhood of the minimum degree vertex $u$ is empty. In this case, we can simply take $\Gamma$ to be an empty graph on $q(\delta(H) - 1) + 1$ vertices. Properties~\ref{prop:nhdgraph:Fcopies} and~\ref{prop:nhdgraph:maxdeg} of Proposition~\ref{prop:nhdgraph} are then trivially satisfied, and so it follows that $H$ is $q$-Ramsey simple.
\end{proof}

When $p \gg n^{-\frac23}$, we will begin to see edges in the neighbourhood of the minimum degree vertex. Provided $p \ll n^{-\frac12}$, though, the neighbourhood remains simple in structure, and we can get reasonably sharp bounds on the number of colours for which the random graph is Ramsey simple.

\begin{proof}[Proof of Theorem~\ref{thm:main}\ref{thm:main:intermediatelowerbounds}, first bound]
Let $n^{-2/3}\ll p\ll n^{-1/2}$ and $H \sim G(n,p)$. By Lemma~\ref{lem:wellbehaved}, we know that with high probability $H$ is well-behaved. Let $\lambda(F)$ be the order of the largest component of the subgraph $F = H[N_H(u)]$ induced by the neighbourhood of the minimum degree vertex $u$. Given any $\eps > 0$, we shall show that, as $n$ tends to infinity, $H$ is with high probability $q$-Ramsey simple for every $q \le \parens*{1 - 5 \eps} \frac{\delta(H)}{\lambda(F)^2}$.

By Corollaries~\ref{cor:nhdgraphsparse} and~\ref{cor:nhdgraphsmalltrees} the graph $F$ is with high probability a very sparse forest. More precisely, if we denote by $T_1, T_2, \hdots, T_t$ the components of $F$ that contain at least one edge, then each $T_j$ is a tree spanning at most $\lambda(F)$ vertices and $\sum_j v(T_j) \le \eps \delta(H)$.

To prove simplicity, we provide a geometric construction of an edge-coloured graph $\Gamma$ on $q(\delta(H) - 1)+1$ vertices. Let $s$ be the largest prime number that is at most $\parens*{1 - \eps} \frac{\delta(H)}{\lambda(F)}$. By the upper bound of Baker, Harman, and Pintz~\cite{baker2001primegaps} on prime gaps, we have $s \ge \parens*{1 - 2 \eps} \frac{\delta(H)}{\lambda(F)}$. Now consider the finite affine plane $\mathbb{F}_s^2$, which has $s^2$ points. Each line in the plane consists of $s$ points, and the set of lines can be partitioned into $s+1$ parallel classes $C_1, C_2, \hdots, C_{s+1}$ of $s$ lines each.

To form the graph $\Gamma$, we take as vertices an arbitrary set of $q(\delta(H) - 1) + 1$ points from $\mathbb{F}_s^2$. Note that our choices of $q$ and $s$ ensure that $q(\delta(H) - 1) + 1 \le s^2$ and $q \leq s \le \delta(H)$. Then, given $x, y \in V(\Gamma)$, we add the edge $\{x,y\}$ if and only if the line they span lies in one of the first $q$ parallel classes. We colour the edges by the parallel classes; that is, if the corresponding line lies in $C_i$, for some $i \in [q]$, we give the edge $\{x,y\}$ the colour $i$.

We shall now show that $\Gamma$ satisfies properties~\ref{prop:nhdgraph:Fcopies} and~\ref{prop:nhdgraph:maxdeg} of Proposition~\ref{prop:nhdgraph}, which will show that $H$ is $q$-Ramsey simple. We start with the latter property. The colour-$i$ subgraph $\Gamma_i$ of $\Gamma$ consists of pairs of points in lines in the parallel class $C_i$. Each such line gives rise to a clique in $\Gamma$, and since the lines are parallel, these cliques are vertex-disjoint. Finally, since each line has at most $s$ points in $\Gamma$, it follows that $\Delta(\Gamma_i) \le s-1 \le \delta(H) - 1$, and hence property~\ref{prop:nhdgraph:maxdeg} holds.

For property~\ref{prop:nhdgraph:Fcopies}, we need to show that for any $\delta(H)$-set $U \subseteq V(\Gamma)$ and any colour $i \in [q]$, we can find a copy of $F$ in $\Gamma_i[U]$. We shall embed the trees $T_j$ one at a time. Suppose, for some $j \ge 1$, we have already embedded $T_1, T_2, \hdots, T_{j-1}$, and let $U' \subseteq U$ be the set of vertices we have not yet used. Since $F$ has at most $\eps \delta(H)$ non-isolated vertices, it follows that $\card{U'} \ge \parens*{1 - \eps} \delta(H)$.

As observed when showing property~\ref{prop:nhdgraph:maxdeg}, the colour-$i$ subgraph $\Gamma_i$ is a disjoint union of at most $s$ cliques. Hence, by the pigeonhole principle, $U'$ meets one of these cliques in at least $\frac{\card{U'}}{s}$ vertices. By our choice of $s$, this is at least $\lambda(F)$, and so $\Gamma_i[U']$ contains a clique on $\lambda(F)$ vertices, in which we can freely embed $T_j$.

Repeating this process, we can embed all the trees, thereby obtaining a copy of $F$ in $\Gamma_i[U]$. Hence property~\ref{prop:nhdgraph:Fcopies} is satisfied as well, and thus $H$ is indeed $q$-Ramsey simple.
\end{proof}

The above construction allows us to obtain lower bounds on $\tilde{q}(H)$ whenever $n^{-2/3}\ll p\ll n^{-1/2}$. However, when $p = n^{-\frac12 - o(1)}$ and $\lambda(F)$ gets larger, a probabilistic construction yields a better bound. 

\begin{proof}[Proof of Theorem~\ref{thm:main}\ref{thm:main:intermediatelowerbounds}, second bound]
Let $p \ll n^{-\frac12}$, and let $H \sim G(n,p)$. Our goal is to show that if $q \le \frac{\delta(H)}{80 \log n}$, then with high probability $H$ is $q$-Ramsey simple. We again start by collecting some information about the random graph $H$, before constructing an appropriate graph $\Gamma$ for Proposition~\ref{prop:nhdgraph}.

By Lemma~\ref{lem:gnp_degrees}\ref{lem:gnp_degree_concentration} and Lemma~\ref{lem:wellbehaved}, we may assume that $H$ is well-behaved with $\delta(H) = (1\pm o(1)) np$.
Furthermore, applying 
Corollaries~\ref{cor:nhdgraphsparse} and~\ref{cor:nhdgraphsmalltrees}, we know that with high probability, the subgraph $F = H[N(u)]$ induced by the neighbourhood of the minimum degree vertex is a forest with $o\parens*{\delta(H)}$ edges containing no tree on more than $\log n$ vertices. We label the components of $F$ as $T_1, T_2, \hdots, T_t$.

We now define the $q$-coloured graph $\Gamma$ on $N = q(\delta(H) - 1)+1$ vertices. We take $\Gamma \sim G(N,\tfrac{1}{2})$ to be a random graph with edge probability $\tfrac{1}{2}$. Once we have sampled the graph, we also equip it with a random colouring, colouring each edge independently and uniformly at random from the $q$ colours.

Observe that for each colour $i \in [q]$, the colour-$i$ subgraph $\Gamma_i \subseteq \Gamma$ has the distribution $G(N,\tfrac{1}{2q})$. Hence, it follows from Lemma~\ref{lem:gnp_degrees}\ref{lem:gnp_degree_concentration}, combined with a union bound over the number of colours $q$, that with high probability $\Delta(\Gamma_i) \le \parens*{1 + o(1)} \tfrac{N}{2q} < \delta(H)$ for every $i\in [q]$. This establishes property~\ref{prop:nhdgraph:maxdeg} of Proposition~\ref{prop:nhdgraph}.

We now need to show that property~\ref{prop:nhdgraph:Fcopies} also holds with high probability. That is, we need to ensure that, for every colour $i \in [q]$ and every set $U \subseteq V(\Gamma)$ of $\delta(H)$ vertices, we can find a copy of $F$ in $\Gamma_i[U]$. We shall once again do this by proving the stronger fact that, taking $\varepsilon \ge 0$, for any set $U'$ of $(1- \varepsilon) \delta(H) \ge \tfrac12 np$ vertices, and any tree $T$ on at most $\log n$ vertices, we can embed a copy of $T$ in $\Gamma_i[U']$. We can then greedily embed the components of $F$ one at a time; as $F$ only has $o(\delta(H))$ edges, we will always have at least $(1- \varepsilon) \delta(H)$ vertices remaining when embedding one of its components.

Applying Lemma~\ref{lem:gnp_edge.counts}\ref{lem:gnp_edge.counts_concentration2} combined with a union bound over the colours $i \in [q]$,
we know that with high probability the monochromatic subgraphs $\Gamma_i$ have the property that the number of edges spanned by any set of $\tfrac12 np$ vertices is at least $\tfrac14 \parens*{\tfrac12 np}^2 \tfrac{1}{2q} > 2 np \log n$.

Since the set $U'$ spans at least $2 np \log n$ edges, the average degree in any such subgraph is at least $2 \log n$. By repeatedly removing low-degree vertices, we obtain a subgraph with minimum degree at least $\log n$. It is then trivial to embed a tree on at most $\log n$ vertices in this subgraph, as at each vertex, we will always have enough unused neighbours to embed its children. Thus, we can find disjoint copies of the trees $T_1, T_2, \hdots, T_t$, thereby constructing a copy of $F$ in $\Gamma_i[U]$. This proves property~\ref{prop:nhdgraph:Fcopies}, and so by Proposition~\ref{prop:nhdgraph} it follows that $H$ is $q$-Ramsey simple.
\end{proof}

\subsection{Abundance} \label{sec:simabundance}

Proposition~\ref{prop:nhdgraph} shows that, when establishing the $q$-Ramsey simplicity of a graph $H$, it suffices to consider the neighbourhood of a minimum degree vertex $w$. In the construction of the Ramsey host graph $G$, the vertex $w$ will have the desired degree $\delta(G) = s_q(H)$, but we can expect all other vertices to have much higher degree. Indeed, they are all contained in signal senders, which tend to be large and complicated structures. It is then natural to ask if this must be the case, or if we can instead find minimal $q$-Ramsey graphs for $H$ with arbitrarily many vertices of the lowest possible degree. Following the terminology of~\cite{BCG2020a}, we say a graph $H$ is \emph{$s_q$-abundant} if, for every $k \ge 1$, there is a minimal $q$-Ramsey graph for $H$ with at least $k$ vertices of degree $s_q(H)$. In this section we shall extend the results of the previous section, showing that in cases~\ref{thm:main:alwayssimple} and~\ref{thm:main:intermediatelowerbounds}, $G(n,p)$ is almost surely not just $q$-Ramsey simple but also $s_q$-abundant.

\begin{prop}\label{prop:abundance}
Let $q \ge 2$ and let $H$ be a well-behaved $n$-vertex graph. If there is a $q$-edge-coloured graph $\Gamma$ on $q(\delta(H) - 1) + 1$ vertices satisfying the conditions of Proposition~\ref{prop:nhdgraph}, and if either $e(\Gamma) = 0$ or $n > q(\delta(H) - 1) + 2$, then not only is $H$ $q$-Ramsey simple, but it is also $s_q$-abundant.
\end{prop}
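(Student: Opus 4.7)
To prove that $H$ is $s_q$-abundant, it suffices to construct, for every $k \ge 1$, a graph $G_k$ that is $q$-Ramsey for $H$, contains $k$ vertices $w_1, \ldots, w_k$ each of degree $q(\delta(H) - 1) + 1$, and satisfies $G_k - w_i \not\to_q H$ for every $i$. Any minimal Ramsey subgraph $G'_k \subseteq G_k$ is then forced to contain every $w_i$ (else $G'_k \subseteq G_k - w_i$ would certify Ramsey-ness of $G_k - w_i$), and the Fox--Lin lower bound gives $d_{G'_k}(w_i) = s_q(H) = q(\delta(H) - 1) + 1$, exhibiting a minimal $q$-Ramsey graph for $H$ with $k$ vertices of minimum degree.

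The skeleton of $G_k$ generalizes the construction from Proposition~\ref{prop:nhdgraph}: we take $k$ pairwise-disjoint copies $\Gamma^{(1)+}, \ldots, \Gamma^{(k)+}$ of the gadget $\Gamma^+$, a shared matching $e_1, \ldots, e_q$ coupled pairwise by negative signal senders, and vertices $w_j$ each joined to all of $V(\Gamma^{(j)})$, so that $d_{G_k}(w_j) = v(\Gamma) = q(\delta(H)-1)+1$. Ramsey-ness of $G_k$ then follows exactly as in Claim~\ref{clm:nhdgraphisramsey}: the matching carries distinct colours, positive signal senders force every $\Gamma^{(j)+}$ to be coloured by the canonical colouring $c$, and the pigeonhole principle applied to any single $w_j$ produces a monochromatic copy of $H$ using property~\ref{prop:nhdgraph:Fcopies} and the associated copy of $R = H - (\{u\} \cup N(u))$.

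The crux is $G_k - w_i \not\to_q H$. A naive installation of positive signal senders $e_{c(f)} \leftrightarrow f$ for every $f \in E\bigl(\bigcup_j \Gamma^{(j)+}\bigr)$ would still force $c$ on every $\Gamma^{(j)+}$ in $G_k - w_i$, so the pigeonhole at any remaining $w_j$ would again create a monochromatic $H$. To break this forcing for $j \ne i$ upon deletion of $w_i$, we route each positive signal sender leading to an edge of $\Gamma^{(j)+}$ through $w_i$ (for some chosen $i \ne j$), by identifying an internal vertex of a R\"odl--Siggers signal sender with $w_i$. The resulting gadget retains the positive-signal property while $w_i$ is present but becomes $H$-colourable with either relative colouring of its signal edges once $w_i$ is deleted. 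In $G_k - w_i$ we then recolour each $\Gamma^{(j)+}$ ($j \ne i$) so that no colour class at $w_j$ contains a monochromatic copy of $F$, and extend consistently over the modified gadgets and over $\Gamma^{(i)+}$.

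The main obstacle is verifying that this modification introduces no unintended monochromatic copies of $H$, since $w_i$ now sits inside many signal senders. An analogue of Lemma~\ref{lem:safety} confines any spurious copy either to a single (modified) signal sender or to a single $\Gamma^{(j)+}$, and the hypothesis of the proposition takes care of the latter case: if $e(\Gamma) = 0$ then $F$ is empty and $\Gamma^{(j)+}$ cannot host a copy of $H$ at all, whereas if $n > q(\delta(H) - 1) + 2 = v(\Gamma) + 1$ the additional vertex slack together with property~\ref{def:wb_components} of well-behavedness extends the counting argument from Claim~\ref{clm:nhdgraphisnotramsey}, ruling out copies of $H$ supported inside $V(\Gamma^{(j)+})$ together with the matching.
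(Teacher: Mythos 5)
Your high-level plan---build a graph $G_k$ with $k$ low-degree vertices $w_1,\dots,w_k$ such that $G_k \to_q H$ but $G_k - w_i \not\to_q H$ for each $i$---is a reasonable one, and your first paragraph correctly explains why such a $G_k$ would certify $s_q$-abundance. However, the construction you propose has a fatal gap at exactly the point you flag as the crux, and the way you try to fix it does not work.

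The fix of ``routing'' the positive signal senders for $\Gamma^{(j)+}$ through an internal vertex identified with $w_i$ fails for several independent reasons. First, it destroys the degree condition that the whole argument rests on: once internal signal-sender vertices are identified with $w_i$, we have $d_{G_k}(w_i) > q(\delta(H)-1)+1$, so $w_i$ no longer has the minimum possible degree and the final conclusion $d_{G'_k}(w_i) = s_q(H)$ does not follow. Second, you assert that the modified gadget still forces equal colours on its signal edges while $w_i$ is present but admits both colourings once $w_i$ is deleted. Signal senders are supplied by Theorem~\ref{thm:signalsenders} as a black box; there is no control over their internal structure that would let one pick an internal vertex with this property, and the claim is stated without any justification. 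Third, even granting the modification, your plan routes all senders for $\Gamma^{(j)+}$ through a single chosen $w_i$ (``for some chosen $i\ne j$''); then deleting a \emph{different} vertex $w_{i'}$ leaves the forcing on $\Gamma^{(j)+}$ intact, the pigeonhole at the still-present $w_j$ again produces a monochromatic $H$, and $G_k - w_{i'}$ remains Ramsey. Finally, identifying a signal-sender vertex with $w_i$ means the sender is no longer attached to the rest of the graph only along its two signal edges, so the cut-structure argument of Lemma~\ref{lem:safety} (which relies on $V(S)\cap V(G)$ being exactly the four endpoints of the signal edges) no longer applies, and your invocation of ``an analogue of Lemma~\ref{lem:safety}'' is not substantiated.

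The paper avoids all of this by not building $G_k$ directly. It instead invokes Theorem~\ref{thm:arbitrarily_many_3-connected} (a consequence of Theorem~3.1 in~\cite{BCG2020a}), which promises arbitrarily many vertices of degree $d_{G'}(v_0)$ provided one exhibits a single minimal $q$-Ramsey graph $G'$ together with a vertex $v_0$ and an edge $e$ that do not lie in a common copy of $H$. With the graph $G$ from Proposition~\ref{prop:nhdgraph}, one takes $v_0 = w$ and shows (via Claim~\ref{clm:abundancematching}, which is where the hypothesis $e(\Gamma)=0$ or $n > q(\delta(H)-1)+2$ is actually used) that a minimal Ramsey subgraph $G'$ must retain some matching edge $e_i$; since $e_i$ lies at distance $\ge v(H)$ from $w$, they cannot share a copy of $H$, and the theorem applies. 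You should look up and use that replication theorem rather than attempting to engineer the replication by hand.
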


As we have shown in the previous section, for the ranges of parameters covered by cases~\ref{thm:main:alwayssimple} and~\ref{thm:main:intermediatelowerbounds} of Theorem~\ref{thm:main}, $H$ is well-behaved and admits the construction of a suitable $q$-coloured graph $\Gamma$. Moreover, when $p \ll n^{-2/3}$, we have $e(\Gamma) = 0$, while when $n^{-2/3} \ll p \ll n^{-1/2}$, we have $\delta(H) = (1 + o(1)) np$ and $q \le np$, and so $q(\delta(H) - 1) + 2 \le (1 + o(1))(np)^2 \ll n$. Hence, once we prove Proposition~\ref{prop:abundance}, we will have shown that in these cases $G(n,p)$ is also $s_{q}$-abundant. To do so, we shall apply the following theorem, a simple corollary of Theorem 3.1 from~\cite{BCG2020a}, which gives a sufficient condition for the existence of minimal $q$-Ramsey graphs with several vertices of a given degree.

\begin{theorem} \label{thm:arbitrarily_many_3-connected}
Let $H$ be $3$-connected and assume there exists a minimal $q$-Ramsey graph $G'$ for $H$, together with a vertex $v_0 \in V(G')$ and an edge $e\in E(G')$ such that $v_0$ end $e$ do not share a copy of $H$ in $G'$. Then, for any $k \ge 1$, there exists a minimal $q$-Ramsey graph for $H$ that has at least $k$ vertices of degree $d_{G'}(v_0)$.
\end{theorem}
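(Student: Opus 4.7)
The plan is to construct, for each $k\ge 1$, a minimal $q$-Ramsey graph $G^*$ containing at least $k$ vertices of degree $d_{G'}(v_0)$ by taking $k$ disjoint copies $G'_1, \ldots, G'_k$ of $G'$ and stitching them together using signal senders. Since $H$ is $3$-connected, Theorem~\ref{thm:signalsenders} ensures positive and negative signal senders exist at any prescribed distance $d \ge v(H)$, and Lemma~\ref{lem:safety} then guarantees that no monochromatic copy of $H$ can span across a sender. The non-sharing condition on $v_0$ and $e$ implies in particular that $v_0 \notin e$, which is what will let us arrange the stitching so that no new edges are added at the copies $v_0^{(i)}$ of $v_0$ in each $G'_i$, preserving their degrees at $d_{G'}(v_0)$.

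Concretely, I plan to identify the edges $e^{(1)}, \dots, e^{(k)}$ (the copies of $e$) into a single shared edge $e^*$ with common endpoints, so that the copies $G'_i$ overlap only at this edge. To make the resulting graph minimally Ramsey I will attach a small auxiliary gadget at $e^*$, built from positive and negative signal senders in the standard way, whose role is to constrain the colour of $e^*$ in any $H$-free $q$-colouring. That $G^* \to_q H$ is immediate from $G'_1 \subseteq G^*$, and the distance conditions in the senders together with Lemma~\ref{lem:safety} ensure that every monochromatic copy of $H$ in a $q$-colouring of $G^*$ must lie inside one of the copies $G'_i$ or inside one of the senders.

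The main obstacle is verifying minimality, which requires an $H$-free $q$-colouring of $G^* - f$ for every edge $f \in E(G^*)$. When $f = e^*$, the copies break into the pairwise disjoint graphs $G'_i - e^{(i)}$, each $H$-free colourable by minimality of $G'$, and the colourings combine and extend to the senders via Definition~\ref{def:signalsenders}. When $f$ lies inside a signal sender, the sender's own colouring structure provides the flexibility to colour around $f$, and each $G'_i$ can then be coloured $H$-free by choosing the unconstrained colour of $e^*$ suitably. The delicate case is $f \in E(G'_i) \setminus \{e^*\}$, where each $G'_j$ with $j\neq i$ remains a full copy of the Ramsey graph $G'$. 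Here the non-sharing hypothesis is crucial: an $H$-free colouring of $G'_i - f$ assigns $e^{(i)} = e^*$ some colour $c$, and we need each $G'_j$ to also admit an $H$-free colouring with $e^{(j)} = e^*$ of colour $c$. Because $v_0$ and $e$ share no copy of $H$ in $G'$, any monochromatic $H$-copy that would obstruct such a colouring in $G'_j$ must avoid $v_0^{(j)}$, so the edges incident to $v_0^{(j)}$ — which sit outside every such obstructing $H$-copy — provide the local degrees of freedom needed to recolour and eliminate these obstructions without disturbing the colour on $e^{(j)}$.

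Once these colourings are verified case by case, $G^*$ will be a minimal $q$-Ramsey graph for $H$ in which $v_0^{(1)}, \dots, v_0^{(k)}$ are $k$ distinct vertices of degree $d_{G'}(v_0)$, completing the proof. I expect the recolouring step in the delicate case above — propagating an $H$-free colouring from $G'_i - f$ through the forced colour on $e^*$ to all the other intact copies $G'_j$ while using the non-sharing condition to neutralise monochromatic $H$-copies at each $v_0^{(j)}$ — to be the main technical challenge, and the precise design of the auxiliary gadget at $e^*$ will have to be tailored to make this step go through.
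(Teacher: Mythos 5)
There is a fatal flaw in your minimality verification, and it sinks the whole construction. After identifying the copies of $e$ into the shared edge $e^*$, each $G'_i$ is still a \emph{full} copy of $G'$ sitting inside $G^*$; in particular $G'_1$ is a proper subgraph of $G^*$ (for $k\ge 2$, or as soon as any gadget is attached). Since a minimal $q$-Ramsey graph cannot properly contain a $q$-Ramsey subgraph, $G^*$ can never be minimal, no matter how the auxiliary gadget at $e^*$ is designed --- gadgets only add edges and constraints, which cannot destroy the Ramsey property of what is already there. Concretely, for any edge $f\in E(G'_i)\setminus\{e^*\}$ (or any edge inside a sender), the graph $G^*-f$ still contains the intact copy $G'_j$ for some $j\ne i$, and $G'_j\cong G'\rightarrow_q H$, so $G^*-f$ is still Ramsey. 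Your ``delicate case'' makes the impossibility explicit: you ask for an $H$-free colouring of $G'_j$ in which $e^*$ gets a prescribed colour, but $G'_j$ is $q$-Ramsey for $H$ and admits no $H$-free $q$-colouring at all. The appeal to the non-sharing hypothesis is also a non sequitur: a monochromatic copy of $H$ arising in an arbitrary colouring of $G'_j$ need not contain $e^*$, so nothing forces it to avoid $v_0^{(j)}$, and even if it did avoid $v_0^{(j)}$, recolouring the edges incident to $v_0^{(j)}$ would not affect it. Finally, retreating to ``take a minimal Ramsey subgraph of $G^*$'' does not help, since $G'_1\subseteq G^*$ is itself minimal $q$-Ramsey, so the subgraph one obtains may contain only a single vertex of the desired degree (and passing to subgraphs may in any case lower the degrees of the $v_0^{(i)}$ below $d_{G'}(v_0)$).

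For comparison, the paper does not prove this statement itself; it quotes it as a consequence of Theorem~3.1 of~\cite{BCG2020a}. Whatever the details there, the observation above dictates the shape any correct argument must take: the building blocks cannot be intact copies of $G'$, since the final minimal graph may not contain $G'$ as a proper subgraph. Each copy has to be individually deficient (not Ramsey on its own, e.g.\ with the special edge $e$ removed or its role delegated to shared sender gadgetry), with the deficiency restored only collectively, so that every copy --- and hence every vertex $v_0^{(i)}$ --- is genuinely needed; the hypothesis that $v_0$ and $e$ share no copy of $H$ is then what lets one do this surgery near $e$ without adding edges at, or deleting edges from, the copies of $v_0$, so that they end up in the minimal Ramsey graph with degree exactly $d_{G'}(v_0)$. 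Your proposal, which glues intact Ramsey copies and hopes to recolour around them, cannot be repaired without changing the construction in this way.
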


To prove Proposition~\ref{prop:abundance}, we shall show that the $q$-Ramsey graph $G$ we built in the proof of Proposition~\ref{prop:nhdgraph} admits a subgraph $G' \subseteq G$ satisfying conditions of Theorem~\ref{thm:arbitrarily_many_3-connected} when we take $v_0$ to be the minimum degree vertex $w \in V(G)$, implying that $H$ is $s_q$-abundant.

\begin{proof}[Proof of Proposition~\ref{prop:abundance}]
Consider the graph $G$ constructed in the proof of Proposition~\ref{prop:nhdgraph}, and recall that it in particular contained a vertex $w$ of degree $q(\delta(H) - 1) + 1$, and a matching $M = \{e_1, e_2,\hdots, e_q\}$ of edges that were attached to the rest of the graph by signal senders.

By Claim~\ref{clm:nhdgraphisramsey}, we know $G \rightarrow_q H$. Let $G' \subseteq G$ be a minimal subgraph that is still $q$-Ramsey for $H$. Claim~\ref{clm:nhdgraphisnotramsey} shows that we must have $w \in V(G')$, and in our application of Theorem~\ref{thm:arbitrarily_many_3-connected}, we shall take $v_0 = w$. The following claim, which we shall prove later, shows that $G'$ must contain at least one edge from the matching $M$.

\begin{clm} \label{clm:abundancematching}
The graph $G - M$ is not $q$-Ramsey for $H$.
\end{clm}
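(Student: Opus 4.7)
The plan is to exhibit an $H$-free $q$-colouring of $G-M$, which immediately gives $G-M \not\to_q H$. The key observation is that deleting the matching $M$ unhooks each signal sender from its anchor edge(s), so the senders no longer impose any colour constraint on the rest of the graph. This grants us the freedom to recolour $\Gamma^+$ so as to prevent any monochromatic copy of $H$ through $w$.

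The colouring I propose is as follows. For each signal sender (with any of its signal edges in $M$ now absent), take any $H$-free $q$-colouring, which exists by Definition~\ref{def:signalsenders}. On $\Gamma^+$, retain the colouring $c$ on $\Gamma$, and for each pair $(U,i)$ assign a single colour $\sigma(U,i)\in[q]\setminus\{1\}$ to every edge inside $R_{U,i}$ and to every edge between $R_{U,i}$ and $U$ --- for concreteness, take $\sigma(U,1)=2$ and $\sigma(U,i)=i$ for $i\ge 2$. Finally, colour every edge incident to $w$ with colour $1$.

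To verify $H$-freeness, I would first invoke an iterated version of Lemma~\ref{lem:safety}, extended to senders that after removal of $M$ are attached through only one or even zero signal edges: the endpoints of removed matching edges become isolated in the main graph, so by Menger's theorem and the $3$-connectedness of $H$, no copy of $H$ can straddle a sender and the main graph via such vertices. Hence every copy $H_0$ of $H$ in $G-M$ lies either entirely inside a signal sender (not monochromatic by construction) or entirely inside $\Gamma^+\cup\{w\}$. In the latter case I would split on whether $w\in V(H_0)$. If $w\notin V(H_0)$, the argument from Claim~\ref{clm:nhdgraphisnotramsey} applies essentially verbatim: property~\ref{def:wb_components} forces $H_0$ to meet at most one $R_{U,i}$, and property~\ref{prop:nhdgraph:maxdeg} then forces $V(H_0)\cap V(\Gamma)\subseteq U$, leaving $|V(H_0)|\le n-1$; and if $H_0$ meets no $R_{U,i}$ at all, $V(H_0)\subseteq V(\Gamma)$ is either too small or (when $e(\Gamma)=0$) edgeless.

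The crucial and most delicate case is $w\in V(H_0)$. Since all edges at $w$ have colour $1$, any monochromatic $H_0$ must be coloured $1$. The at-most-one argument still applies because $w$ is not adjacent to any $V(R_{U,i})$, so $U$ remains a cut-set separating $V(H_0)\cap V(R_{U,i})$ from the rest of $V(H_0)$ --- the same codegree/component-size calculation then invokes property~\ref{def:wb_components}. If $H_0$ meets some $R_{U,i}$, pick $x\in V(H_0)\cap V(R_{U,i})$: since $x\notin V(\Gamma)$, $x$ is not adjacent to $w$, so every $H_0$-neighbour of $x$ lies in $U\cup V(R_{U,i})$; property~\ref{def:wb_pair_degree} bounds the neighbours in $U$ by $\tfrac12\delta(H)$, while $d_{H_0}(x)\ge\delta(H)$, so $V(H_0)\cap V(R_{U,i})$ must contain an edge of $R_{U,i}$ used by $H_0$, forcing $\sigma(U,i)=1$, contradicting our choice. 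If instead $H_0$ meets no $R_{U,i}$, then $V(H_0)\subseteq V(\Gamma)\cup\{w\}$ has only $q(\delta(H)-1)+2$ vertices; under $n>q(\delta(H)-1)+2$ this is too few, and under $e(\Gamma)=0$ the only edges of $H_0$ are those at $w$, making $H_0$ a star and contradicting the $3$-connectivity of $H$.

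The main obstacle is precisely this last case: one must confirm that both the ``at most one $R_{U,i}$'' argument and the codegree bookkeeping from Claim~\ref{clm:nhdgraphisnotramsey} survive the inclusion of $w$, exploiting the fact that $w$ is adjacent to every vertex of $V(\Gamma)$ but to no vertex of any $R_{U,i}$. Once this is checked, the three cases together establish the absence of monochromatic copies and complete the proof of the claim.
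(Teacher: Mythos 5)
Your proof is correct, but it takes a noticeably more elaborate route than the paper's. The paper simply recolours \emph{all} edges of $\Gamma$ together with all edges incident to $w$ with colour $1$, and all edges inside or attached to the $R_{U,i}$'s with colour $2$ (extending $H$-freely to the senders). This makes the analysis almost immediate: the colour-$1$ subgraph is a cone over $\Gamma$ (so either a star, when $e(\Gamma)=0$, or too small, when $n > q(\delta(H)-1)+2$), and the colour-$2$ subgraph forces a copy of $H$ into a single $R_{U,i}\cup U$, which has only $n-1$ vertices. Your colouring instead retains the original colouring $c$ on $\Gamma$, colours each $R_{U,i}$ with a bespoke colour $\sigma(U,i)\neq 1$, and colours all edges at $w$ with colour $1$. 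This is internally consistent, and your three-way case analysis (which carefully re-runs the cut-set/codegree argument with $w$ present, exploiting that $w$ is adjacent to all of $V(\Gamma)$ but to no $R_{U,i}$) is sound. However, nothing in the final argument actually uses the preserved colouring $c$ on $\Gamma$ or the particular choice of $\sigma$ beyond $\sigma(U,i)\neq 1$, so the extra structure buys you no generality — it only adds cases. You reach the same conclusion via the same key ingredients (Lemma~\ref{lem:safety}, property~\ref{def:wb_pair_degree}, property~\ref{def:wb_components}, and property~\ref{prop:nhdgraph:maxdeg}), just with a colouring that is harder to certify $H$-free than necessary. One small point you gloss over: the $H$-free colourings chosen on the signal senders must be made to agree with the colour you assigned to the surviving signal edge $f$; this is always possible by permuting colours, and should be stated.
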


We thus have $e_i \in E(G')$ for some $i \in [q]$, and we take $e = e_i$ in Theorem~\ref{thm:arbitrarily_many_3-connected}. Given this preparation, it is simple to verify the conditions of the theorem. Indeed, we took $G'$ to be a minimal $q$-Ramsey graph. Moreover, recall that the neighbourhood of $w$ in $G$ is the vertex set of $\Gamma$. As $e_i$ is only connected to $\Gamma$ via signal senders, in which the distance between the signal edges is at least $v(H)$, it follows that there cannot be any copy of $H$ containing both $e_i$ and $w$. We can therefore apply Theorem~\ref{thm:arbitrarily_many_3-connected} to deduce the existence of minimal $q$-Ramsey graphs for $H$ with arbitrarily many vertices of degree $d_{G'}(w) \le d_G(w) = s_q(H)$, showing that $H$ is $s_q$-abundant.
\end{proof}

All that remains, then, is to prove Claim~\ref{clm:abundancematching}, a task we now complete.

\begin{proof}[Proof of Claim~\ref{clm:abundancematching}]
We need to exhibit an $H$-free colouring of $G - M$. This graph consists of three types of edges:
\begin{enumerate}
    \item those incident to $w$ or in the graph $\Gamma$,
    \item those in the subgraphs $R_{U,i}$ and between $R_{U,i}$ and $\Gamma$, for $U \in \binom{V(\Gamma)}{\delta(H)}$ and $i \in [q]$, and
    \item the edges within the signal senders.
\end{enumerate}

We colour all edges of (1) with the colour $1$, and all edges of (2) with the colour $2$. We finish by extending this colouring to an $H$-free $q$-colouring of each of the signal senders; note that this is possible, as each signal sender is missing at least one of its signal edges from $M$.

From Lemma~\ref{lem:safety}, we know that any copy of $H$ is either within a signal sender or outside it, and as we coloured the signal senders in an $H$-free fashion, it is only the colour-$1$ edges of (1) or the colour-$2$ edges of (2) that could give rise to a monochromatic copy of $H$.

We can rule out the former immediately. Either $e(\Gamma) = 0$, in which case the edges of (1) are simply a star around the vertex $w$, which cannot contain a copy of the well-behaved (and therefore $3$-connected) graph $H$, or $n > q(\delta(H) - 1) + 2 = v(\Gamma) + 1$, and so $\Gamma + \{w\}$ does not have enough vertices to support a copy of $H$.

To handle the latter case, observe that the argument in Claim~\ref{clm:nhdgraphisnotramsey} shows that no copy of $H$ can intersect two different subgraphs $R_{U,i}$ and $R_{U',i'}$, for $i, i' \in [q]$ and $U, U' \in \binom{V(\Gamma)}{\delta(H)}$. Hence, any copy of $H$ among the edges of (1) and (2) must use vertices of $R_{U,i}$ and $U$ for some $U \in \binom{V(\Gamma)}{\delta(H)}$ along with some vertex in $\Gamma - U$ or the vertex $w$. In either case these involve edges from (1) and therefore have the colour 1, and hence we cannot have a colour-$2$ copy of $H$.

This completes the proof of Claim~\ref{clm:abundancematching} and, with it, the proof of Proposition~\ref{prop:abundance}.
\end{proof}

\section{Non-simplicity for $G(n,p)$}\label{sec:non-simplicity}

In this section we prove the upper bounds on $\tilde{q}(H)$ from Theorem~\ref{thm:main}. These are the negative results, showing that with high probability $H \sim G(n,p)$ is not $q$-Ramsey simple for large values of $q$.

For the proofs, the centre of attention will again be the neighbourhood of the minimum degree vertex of $H$. We first prove upper bounds for case~\ref{thm:main:intermediateupperbounds} of our theorem. In the proof below, we first establish that, if the neighbourhood of the minimum degree vertex exhibits a high maximum degree, then $H$ cannot be $q$-Ramsey simple for a large enough $q$. For this, we will need the following result from~\cite{Kogan}. 

\begin{thm}[\cite{Kogan}]\label{thm:kindependence}
Let $G$ be an $n$-vertex graph of average degree $d$ and let $k \in \mathbb{N}$. Then there is a set $U$ of at least $(k+1)n/(d+k+1)$ vertices such that $\Delta(G[U]) \le k$.
\end{thm}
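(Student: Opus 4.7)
The plan is to prove the theorem by the probabilistic method, running a greedy procedure on a uniformly random ordering of the vertices that directly enforces the maximum-degree constraint. Let $\pi$ be a uniformly random permutation of $V(G)$, and process the vertices in the order given by $\pi$, maintaining a set $U$ initialised to $\emptyset$. When processing $v$, include $v$ in $U$ if and only if both (i) $v$ currently has at most $k$ neighbours in $U$ and (ii) every neighbour of $v$ currently in $U$ has at most $k-1$ neighbours in $U$. This rule enforces the invariant $\Delta(G[U]) \le k$ throughout, so the structural requirement is automatic, and the entire task reduces to lower bounding $\mathbb{E}[|U|]$.

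The heart of the proof will be to show that each vertex $v$ is included in $U$ with probability at least $(k+1)/(d(v)+k+1)$. For $k=0$ this is immediate from the classical Caro--Wei bound: $v$ enters $U$ precisely when it precedes all of its neighbours in $\pi$, which occurs with probability $1/(d(v)+1)$. For $k \ge 1$, the intuition is that the slack provided by the degree allowance (effectively $k$ extra ``seats'' at each vertex) contributes the additive $k$ in the denominator. I would aim to make this precise by restricting attention to the relative order of the vertices of $N(v) \cup \set{v}$ together with $k$ auxiliary phantom markers, so that the event $v \in U$ corresponds to $v$ appearing among the first $k+1$ entries of this enlarged list, yielding the desired probability by symmetry.

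Given this inclusion bound, the theorem will follow at once from linearity of expectation combined with Jensen's inequality. Since $x \mapsto 1/(x+k+1)$ is convex on $[0,\infty)$ and the average of $d(v)$ over $v \in V(G)$ is $d$, we obtain
\[
\mathbb{E}[|U|] \;\ge\; \sum_{v \in V(G)} \frac{k+1}{d(v)+k+1} \;\ge\; \frac{(k+1)n}{d+k+1}.
\]
At least one realisation of $\pi$ therefore produces a set $U$ with $|U| \ge (k+1)n/(d+k+1)$ and $\Delta(G[U]) \le k$, giving the desired conclusion.

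The main obstacle is the second step, pinning down the inclusion probability for $k \ge 1$. In contrast to the Caro--Wei setting, whether a neighbour of $v$ has entered $U$ depends on the full history of the greedy process and on vertices potentially far from $v$, so a direct symmetry argument on $N(v) \cup \set{v}$ no longer suffices; one must couple the ``local'' process near $v$ with the full process in a way that does not lose more than a factor of $(k+1)/(d(v)+k+1)$. If the phantom-marker coupling proves too delicate, I would fall back on an inductive argument on $n$, splitting into cases based on whether a chosen maximum-degree vertex lies in $U$ (in which case we recurse on $G$ minus $v$ and $d(v)-k$ of its neighbours) or not (recursing on $G-v$), and optimising the two-case recursion to recover the precise ratio $(k+1)/(d+k+1)$.
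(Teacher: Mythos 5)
The paper cites this result as a black box from~\cite{Kogan} and offers no proof, so there is no internal argument to compare against; I judge the proposal on its own terms. The outer skeleton is sound: you correctly reduce to the per-vertex bound $\Pr[v\in U]\ge (k+1)/(d(v)+k+1)$, the passage to $(k+1)n/(d+k+1)$ via linearity of expectation and the convexity of $x\mapsto 1/(x+k+1)$ is clean, and your greedy rule does preserve the invariant $\Delta(G[U])\le k$ throughout.

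The gap is exactly where you flag it, and it is substantive, not a routine detail. In your process the event $\{v\in U\}$ is not a function of the relative order of $N(v)\cup\{v\}$ in $\pi$: condition (ii) asks whether the $U$-neighbours of $v$ have themselves accumulated $k$ $U$-neighbours, and those second-order neighbours may sit anywhere in the graph, so admission depends on the full history of the greedy. (Even for $k=0$ the sentence ``$v$ enters $U$ precisely when it precedes all of its neighbours'' is false as an equality, since a neighbour processed earlier but rejected does not block $v$; the Caro--Wei event is only a sub-event of $\{v\in U\}$, which luckily is the direction you need.) The phantom-marker symmetry you sketch therefore cannot be read off the ranking of $N(v)\cup\{v\}$ plus $k$ dummies; you would need a genuine coupling between this local ranking and the global greedy history, and none is supplied. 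Until that is done, the central inequality $\Pr[v\in U]\ge (k+1)/(d(v)+k+1)$ is a conjecture, not a lemma. The inductive fallback is likewise not yet an argument: one cannot ``split on whether a maximum-degree vertex lies in $U$'' when $U$ is the very object being constructed, and you do not show that the target quantity $(k+1)n/(d+k+1)$ is monotone under the deletions you propose, which is the crux of any such recursion. As written, this is a plausible programme rather than a proof.
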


\begin{proof}[Proof of Theorem~\ref{thm:main}\ref{thm:main:intermediateupperbounds}]
Let $n^{-2/3}\ll p\ll 1$ and $H\sim G(n,p)$. By Lemma~\ref{lem:wellbehaved}, we know that $H$ has a unique vertex $u$ of minimum degree. As before, we set $F = H[N_H(u)]$.

Suppose that $H$ is $q$-Ramsey simple and $G$ is a minimal $q$-Ramsey graph for $H$ with minimum degree $N = q(\delta(H)-1)+1$. Let $w$ be a vertex of minimum degree in $G$, and $\Gamma = G[N_G(w)]$.
It follows from Remark~\ref{rem:nhdgraphnecessary} that there is an edge-colouring of $\Gamma$ such that the induced graph on every $\delta(H)$-set of vertices contains, in each colour, a copy of $F$. 

\smallskip
We are now ready to prove that $\tilde{q}(H) \le (1 + o(1))\frac{\delta(H)}{\Delta(F)}$.
The above observation implies that the induced graph on each $\delta(H)$-set has, in each colour, a vertex of degree at least $\Delta(F)$. However, the average degree of the sparsest colour class in $\Gamma$ is at most $\frac{2\binom{N}{2}}{qN}=\frac{N-1}{q} = \delta - 1$. Thus, by Theorem~\ref{thm:kindependence}, $\Gamma$ has a set of $\frac{\Delta(F)N}{\delta(H)+\Delta(F)-1}$ vertices
that induce a graph with maximum degree less than $\Delta(F)$ in this colour.
Hence, we must have
\[ \frac{\Delta(F)N}{\delta(H)+\Delta(F)-1} \le \delta(H) - 1, \]
which rearranges to give $q \le \frac{\delta(H)+\Delta(F)-1}{\Delta(F)} - \frac{1}{\delta(H)-1}$, from which the conclusion follows.
\medskip

We turn our attention to the second bound, namely $\tilde{q}(H) \le (1 + o(1)) \frac{\delta(H)^2}{2 e(F)}$. For any subset $U\subseteq V(\Gamma)$ of size $\delta(H)$, there must be a colour $i\in [q]$ such that there are at most $\frac{1}{q}\binom{\delta(H)}{2}$ edges of colour $i$ inside $U$. Using once again our observation above, we know that $\Gamma[U]$ contains a copy of $F$ in colour $i$, and therefore we must have $\frac{1}{q}\binom{\delta(H)}{2} \geq e(F)$, which yields the claimed bound. 
\end{proof}

We remark that the proofs of both upper bounds in Theorem~\ref{thm:main}\ref{thm:main:intermediateupperbounds} do not use the fact that $H$ is a random graph, and are valid for any graph that has a unique vertex of minimum degree whose neighbourhood is not an independent set. We end this section with a proof of Theorem~\ref{thm:main}\ref{thm:main:neversimple}.

\begin{proof}[Proof of Theorem~\ref{thm:main}\ref{thm:main:neversimple}]
Let $p\gg \sqrt{\frac{\log n}{n}}$ and $H\sim G(n,p)$. By Lemma~\ref{lem:qmonotonicity}, it suffices to show that 
a.a.s.~$H$ is not $q$-Ramsey simple for $q=2$.

For this, following Lemma~\ref{lem:gnp_triangles}, we may assume that every edge in $H$ belongs to a triangle. Now suppose for a contradiction that $H$ is $2$-Ramsey simple.
Let $G$ be a minimal $2$-Ramsey graph for $H$ such that
$G$ has a vertex $w$ with $d_G(w)=2\delta(H)-1$.
By the minimality of $G$, we can find an $H$-free
$2$-colouring $c$ of the graph $G-w$. 
Now fix an arbitrary vertex $v\in N_G(w)$ and observe that, by the pigeonhole principle, there must be a set  $W\subseteq N_G(w) \setminus\{v\}$
of size $\delta(H)-1$ such that all edges between $v$ and any of its neighbours in $W$ have the same colour; without loss of generality, let this be colour~$1$ and set $U=W\cup \{v\}$. We can extend the colouring $c$ by giving colour 1 to all edges in from $w$ to $N_G(w) \setminus U$ and giving colour 2 to all edges from $w$ to vertices in $U$. With this colouring we cannot create a monochromatic copy of $H$ in colour 1,
as $w$ is only incident to $\delta(H)-1$ edges of colour 1. 
On the other hand, $w$ is incident to exactly $\delta(H)$ edges of colour 2, which all lie between $w$ and $U$. Hence, if there were a monochromatic copy of $H$ in colour 2, the edge $wv$ would need to be part of it. However, since all edges in $U$ involving $v$ are of colour $1$, that means the edge $wv$ is not contained in any triangle of colour $2$, and hence cannot be in a monochromatic copy of $H$.
\end{proof}

\section{Concluding remarks and open problems}\label{sec:concluding}

In this paper we built upon the work of Grinshpun~\cite{grinshpun2015some} and studied the $q$-Ramsey simplicity of $H\sim G(n,p)$ for a wide range of values of $p$ and $q$. We encountered three different types of behaviour: for very sparse ranges, i.e.,~when $p\ll \frac{1}{n}$ or $\frac{\log n}{n} \ll p \ll n^{-\frac{2}{3}}$, we showed that a.a.s.~$H$ is $q$-Ramsey simple for every possible number of colours $q$;
for much denser ranges, i.e.,~when $p\gg \left(\frac{\log n}{n} \right)^{\frac{1}{2}}$,  a.a.s.~we do not have Ramsey simplicity even when $q=2$; in between these ranges, when $n^{-\frac{2}{3}}\ll p \ll n^{-\frac{1}{2}}$, there exists a finite threshold value $\tilde{q}(H) \ge 2$ on the number of colours $q$ such that $H$ is $q$-Ramsey simple if and only if $q\leq \tilde{q}(H)$. We determined this threshold up to a constant or, when $p = n^{-\frac12 - o(1)}$, logarithmic factor.  Several natural questions remain open.

\smallskip
First, our main result does not provide any information on the Ramsey simplicity of $G(n,p)$ when $p$ is between $\frac{1}{n}$ and $\frac{\log n}{n}$. 
\begin{question} 
What can be said about $\tilde{q}(H)$ when $H\sim G(n,p)$ and $\Omega \left( \frac{1}{n} \right) = p = O \left( \frac{\log n}{n} \right)$? In particular, is $H$ a.a.s.~$2$-Ramsey simple in this case?
\end{question}
In the range $p\gg \frac{\log n}{n}$ our simplicity proofs rely heavily on the fact that a.a.s.~$H\sim G(n,p)$ is 3-connected, implying the existence of signal senders for $H$, which in turn allow us to deduce a fairly general recipe for constructing suitable Ramsey graphs. When $p\ll \frac{1}{n}$, we know that $H\sim G(n,p)$ is a.a.s.~a forest, and simplicity follows from the construction of Szab\'o, Zumstein, and Z\"urcher~\cite{szabo2010minimum}, which works for certain bipartite graphs. When $\frac{1}{n} \ll p \ll \frac{\log n}{n}$, however, the random graph $G(n,p)$ becomes more complex (in particular, it is non-bipartite) but it is not yet connected. As a result, resolving the aforementioned question will likely require new ideas. 

\medskip

Second, in the range $\Omega \parens*{n^{-\frac{1}{2}}} = p = O \parens*{ \left(\frac{\log n}{n}\right)^{\frac{1}{2}}}$, we proved that $\tilde{q}(H)=O(p^{-1})$, which shows that the threshold 
value here is of smaller order than when $p = n^{-\frac12 - o(1)}$, as demonstrated in Corollary~\ref{cor:bounds}. However, we did not provide any nontrivial lower bounds, and we wonder if that might not be possible. 

\begin{question}
Is it true that $H$ is a.a.s.~not $2$-Ramsey simple when $H\sim G(n,p)$ with $\Omega \parens*{ n^{-\frac{1}{2}} } = p = O \parens*{ \left(\frac{\log n}{n}\right)^{\frac{1}{2}}} $?
\end{question}

In this case, signal senders for $H$ do exist, but the neighbourhood of the minimum degree vertex
becomes more complex than just a forest, making it difficult to construct a graph as described in Remark~\ref{rem:nhdgraphnecessary}. On the other hand, the presence of isolated vertices makes it likely that a more delicate argument than the one used in part~\ref{thm:main:neversimple} would be needed to show non-simplicity for smaller $q$. Nevertheless, we tend to believe that  a.a.s.~$\tilde{q}(H)=1$ 
for all $p\gg n^{-\frac{1}{2}}$.

\medskip
The bounds on $\tilde{q}(H)$ presented in cases \ref{cor:bounds:constanttreecomponents} and \ref{cor:bounds:smalldegrees} are already quite close, but it would be interesting to close the remaining gaps.
\begin{question}
Let $H\sim G(n,p)$ with $n^{-2/3} \ll p \ll n^{-1/2}$. What are the asymptotics of $\tilde{q}(H)$?
\end{question}
In this range, as we have seen in Section~\ref{sec:simplicity},
the question about $q$-Ramsey simplicity
is tightly linked to the problem of finding
a $q$-coloured graph $\Gamma$ on $q(\delta(H) - 1) + 1$ 
vertices such that the following holds:
For every set $U \subseteq V(\Gamma)$ of $\delta(H)$ vertices and for every colour $i \in [q]$, there exists a copy $F_{U,i}$ of $F=H[N(u)]$ in $\Gamma[U]$ whose edges are all of colour $i$.
The proofs of our lower bounds in Section~\ref{sec:simplicity} are obtained by finding such $\Gamma$ (with additional properties as given in Proposition~\ref{prop:nhdgraph})
through explicit constructions or probabilistic arguments.
In order to prove that a.a.s.~$H$ is not $q$-Ramsey simple,
it would suffice to prove that such $\Gamma$ does not exist, that is, every $q$-coloured graph $\Gamma$ on $q(\delta(H) - 1) + 1$  contains \textit{at least} one subset $U \subseteq V(\Gamma)$ of size $\delta(H)$ such that $\Gamma[U]$ is missing a copy of $F$ in at least one colour.
Note that in the proof of our second bound in 
case~\ref{thm:main:intermediateupperbounds} of Theorem~\ref{thm:main}
we obtain such a result by a simple counting argument
which guarantees that we cannot pack $q$ copies of $F$ into any graph on $\delta(H)$ vertices. Related to this argument, it seems challenging to determine how many copies of a given random graph can be packed into a complete graph, leading us to suggest the following question. 

\begin{question}
Let $H\sim G(n,p)$ with $0<p<1$. How many copies of $H$
can be packed into $K_n$?
\end{question}
\medskip

In the densest range, that is, when $p\gg \left(\frac{\log n}{n}\right)^{\frac{1}{2}}$, we know that $H\sim G(n,p)$ is a.a.s.~not $q$-Ramsey simple for any $q\geq 2$. We wonder, however, what the behaviour of $s_q(H)$ in this case is; in particular, it would be interesting to determine whether $s_q(H)$ is still typically close to the easy lower bound $q(\delta(H)-1)+1$.
Note that the answer is no if $p=1$ and $q \ge 2$, since $s_2(K_n)=(n-1)^2$. However, when $\left(\frac{\log n}{n}\right)^{\frac{1}{2}}\ll p \ll 1$, we do not know of any bounds other than the general ones mentioned in the introduction. 
In particular, we propose the following problem, similar to one posed by Grinshpun, Raina, and Sengupta~\cite{grinshpun2017minimum}.

\begin{question}
How large is $s_2(H)$ for $H\sim G(n,\frac{1}{2})$ a.a.s.?
\end{question}
\medskip

Related to the above discussion, we also note that our methods can be applied to the 2-colour asymmetric Ramsey setting, in which a graph $G$ is said to be $2$-Ramsey for a pair of graphs $(H_1,H_2)$ if every red-/blue-colouring of its edges leads to a red copy of $H_1$ or a blue copy of $H_2$. In this setting,
we define minimal Ramsey graphs and the smallest minimum degree $s_2(H_1,H_2)$ in the obvious way; the general lower bound is replaced by
$s_2(H_1,H_2)\geq \delta(H_1)+\delta(H_2)-1$ and
again we call a pair $(H_1,H_2)$ 2-Ramsey simple if this lower bound is attained.
Our constructions can be modified to show that for $H_1\sim G(n,p_1)$ and $H_2\sim G(n,p_2)$ the pair
$(H_1,H_2)$ is a.a.s. 2-Ramsey simple if $\frac{\log n}{n} \ll p_1\leq p_2 \ll n^{-1/2}$. When $p_1,p_2\ll n^{-1}$, then again a modification of the argument of Szab\'o, Zumstein, and Z\"urcher~\cite{szabo2010minimum}
can be used to show that we a.a.s. have 2-Ramsey simplicity. Still, the following questions remain.

\begin{question}
Let $H_1\sim G(n,p_1)$ and $H_2\sim G(n,p_2)$
with $p_1\ll n^{-1}$ and $\frac{\log n}{n} \ll p_2 \ll n^{-1/2}$. Is the pair $(H_1,H_2)$ a.a.s. 2-Ramsey simple? What happens if one of the graphs comes from the dense range?
\end{question}

We also remark that our ideas from Section~\ref{sec:simabundance} can be used to resolve a special case of a conjecture due to Grinshpun~\cite{grinshpun2015some}, stating that all triangle-free graphs are $2$-Ramsey simple. In~\cite{grinshpun2017minimum}, Grinshpun, Raina, and Sengupta use a construction similar to ours to show that the conjecture is true for all regular 3-connected triangle-free graphs satisfying one extra technical condition. Our approach allows us to prove that every well-behaved triangle-free graph is $q$-Ramsey simple for any $q\geq 2$.

\medskip

Finally, let us emphasise that there has been little study of (minimal) Ramsey graphs for $G(n,p)$. The only results we are aware of concern the Ramsey number of $G(n,p)$, as mentioned in Section~\ref{intro:random.graphs}. Hence, as a more general direction for future research, it would be interesting to explore other aspects of the Ramsey behaviour of $G(n,p)$ as the target graph.

\paragraph{Acknowledgements} The first author was supported by the Deutsche Forschungsgemeinschaft Graduiertenkolleg ``Facets of Complexity'' (GRK 2434). The third author was supported by the Deutsche Forschungsgemeinschaft project 415310276.

\bibliographystyle{amsplain}
\bibliography{biblio}

\appendix
\section{Forests are Ramsey simple}\label{sec:forests}

\begin{lemma}\label{lem:forest}
For every forest $F$ without isolated vertices and every integer $q\geq 2$, we have $s_q(F)=1$. 
\end{lemma}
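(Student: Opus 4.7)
First I would observe that each component of $F$ is a tree with at least one edge and hence contains a leaf; so $\delta(F) = 1$, and the Fox--Lin bound gives $s_q(F) \ge q(\delta(F)-1) + 1 = 1$. To match this I need to exhibit a minimal $q$-Ramsey graph $G$ for $F$ with a vertex of degree~$1$. My plan is to build a graph $G$ containing a pendant vertex $x$ such that $G \to_q F$ but $G - x \not\to_q F$; the second condition will force $x$ into every minimal $q$-Ramsey subgraph of $G$, which therefore has minimum degree at most $d_G(x) = 1$, matching the lower bound.

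To construct such a $G$, I would fix a leaf $v$ of some component of $F$ with neighbour $u$ of degree $d = d_F(u)$, set $F' := F - v$, and designate a vertex $y$ of $G - x$ to play the role of $u$. The property to aim for is that, in every $F$-free $q$-colouring of $G - x$ and for every colour $c \in [q]$, there is a colour-$c$ copy of $F'$ in which $y$ plays the role of $u$; then colouring the pendant edge $xy$ with any colour $c$ extends the corresponding copy of $F'$ to a monochromatic copy of $F$, with $x$ playing the role of $v$. The toy case $F = P_3$ already exhibits the shape: $G = K_{1,q+1}$ works, because every $P_3$-free colouring of $G - x = K_{1,q}$ must use each colour exactly once on the $q$ edges at the centre, giving a colour-$c$ copy of $F' = K_2$ rooted at $y$ for every $c$.

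In general, adapting the $q = 2$ argument of Szab\'o, Zumstein, and Z\"urcher \cite{szabo2010minimum}, I would assemble $G - x$ as a carefully balanced ``book'' of copies of $F'$ sharing the vertex $y$. The number of copies would be chosen so that a pigeonhole argument forces the right multiplicity of colour-$c$ edges incident to $y$ for every $c$, after which the tree structure of $F'$ allows a greedy embedding outward from $y$ into the colour-$c$ subgraph. An explicit balanced $q$-colouring of $G - x$ (distributing edges at $y$ evenly across colours) then certifies that $G - x \not\to_q F$. The main obstacle will be the multi-component case: for a monochromatic copy of $F$ in colour $c$ to appear, every component of $F$ must show up simultaneously in colour $c$ on vertex-disjoint supports. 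I would handle this by adjoining, for each component of $F$ other than the one containing $v$, a further gadget on fresh vertices that similarly forces a monochromatic copy of that component in every colour; the pendant vertex $x$ is still attached only to the central $y$, so its degree remains~$1$.
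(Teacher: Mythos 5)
Your high-level strategy is correct and does match the paper's: build a graph $G$ with a degree-one vertex $x$ such that $G \to_q F$ but $G - x \not\to_q F$; then every minimal $q$-Ramsey subgraph of $G$ must contain $x$ (or, in the paper's version, some vertex from a set of pendants), forcing minimum degree $1$. However, the specific construction you propose --- a ``book'' of copies of $F'=F-v$ sharing a vertex $y$ playing the role of $u$, with the claim that an $F$-free colouring must contain, for \emph{every} colour $c$, a colour-$c$ copy of $F'$ rooted at $y$ --- does not work as described, and the gap is not merely a matter of bookkeeping.

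A concrete counterexample: take $F = P_4 = v\,u\,w_1\,w_2$, so $F' = P_3$, and the book is a collection of disjoint paths of length two all emanating from $y$, i.e.\ $y$ joined to $a_1,\dots,a_N$ and each $a_i$ joined to a private $b_i$. Colour every edge $ya_i$ with colour $1$ and every edge $a_ib_i$ with colour $2$. The colour-$1$ subgraph is a star and the colour-$2$ subgraph is a matching, so the colouring is $P_4$-free; yet there is no colour-$2$ copy of $P_3$ at $y$ at all. Insisting on a balanced distribution of colours on the edges at $y$ does not rescue this: one can then colour each $a_ib_i$ differently from $ya_i$ and still be $P_4$-free while having no monochromatic $P_3$ at $y$ in any colour. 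The underlying problem is that pigeonhole can guarantee that \emph{some} colour is overrepresented at $y$, but cannot force \emph{every} colour to admit an outward embedding; and $F$-freeness of the ambient colouring is not, by itself, enough to push the missing copies into existence.

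The paper sidesteps this entirely by working with a bipartition $V(F)=A\cup B$, $|A|=a$ chosen \emph{minimum}. Instead of a single designated vertex $y$, it builds a three-layer graph: a set $X$ of $q(a-1)$ vertices, a much larger set $Y$, with $X$--$Y$ complete bipartite, and a layer $Z$ of pendant vertices hanging off $Y$ in private clusters $Z_y$ of size $bq$. The $F$-free colouring of $G-Z$ is obtained by partitioning $X$ into $q$ blocks of size $a-1$ and colouring $E(X_i,Y)$ with colour $i$; each colour class is then bipartite with one side of size $a-1<a$, and minimality of $a$ rules out an $F$. For $G\to_q F$, one pigeonholes twice over $Y$ --- once on the majority colour towards $Z_y$, once on the colour profile towards $X$ --- to find $v(F)$ vertices in $Y$ that agree in both, and then embeds $A$ into $Y$ and $B$ into $X\cup Z$. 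This handles multi-component $F$ in one stroke, with no separate gadgets, and the minimality of $|A|$ is doing real work that your construction has no analogue of. If you want to complete your route, you would need to replace the ``book'' by something that enforces the forcing property; the published argument suggests that exploiting the bipartition of $F$ (rather than anchoring at the single cut vertex $u$) is the right way to do that.
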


For two colours, Lemma~\ref{lem:forest} follows from the general result for bipartite graphs of Szab\'o, Zumstein, and Z{\"u}rcher~\cite[Theorem~1.3 and Corollary~1.5]{szabo2010minimum}.
Their proof generalises easily to more colours. For the sake of completeness, we include a simplified version of that proof here, dealing only with the case of forests.

\begin{proof}
Given $F$, fix a bipartition $V(F)=A\cup B$, where $|A|\leq |B|$ and the size of $A$ is minimised. Set $a=|A|$, $b=|B|$, $B_1=\{v\in B:~ d_F(v)=1\}$, and 
$B_{\geq 2}=B\setminus B_1$. We start by showing that $|B_{\geq 2}|\leq a-1$. Indeed, let $T_1,\ldots,T_k$ be the components of $F$ and, for each $i\in [k]$, let $r_i$ be an arbitrary vertex in $A\cap V(T_i)$. Viewing $T_i$ as a tree rooted at $r_i$, we note that each element of $B_{\geq 2}\cap V(T_i)$ must have a child in $A\cap V(T_i)$ and, since $T_i$ is a tree, all of these children must be different. Thus, $|A\cap V(T_i)|\geq |B_{\geq 2}\cap V(T_i)|+1$ for all $i\in [k]$, and summing up over all components yields $|A|\geq |B_{\geq 2}|+1$.

Now, set $r=q(a-1), s=q^{r+1}v(F),$ and $t=sbq$, and let $G$ be the graph constructed as follows: 
\begin{itemize}
\item let $V(G)=X\dot{\cup} Y\dot{\cup} Z$, where $\card{X} = r$, $\card{Y} = s$, and $\card{Z} = t$,
\item add a complete bipartite graph between $X$ and $Y$, and
\item partition $Z$ into $s$ subsets of size $bq$, indexed by the elements of $Y$. That is, let $Z = \bigcup\limits_{y\in Y}Z_y$, where $\card{Z_y} = bq$. For each $y\in Y$, connect $y$ to all vertices of $Z_y$.
\end{itemize}

Each vertex $v\in Z$ then satisfies $d_G(v)=1$. We will now show that (i) $G - Z \not\rightarrow_q F$,
and (ii) $G \rightarrow_q F$.
From this it follows directly that $G$ must contain a graph from $\mathcal{M}_q(F)$ with minimum degree one, and hence that  $s_q(F)=1$.

\medskip

To see property (i), colour $E(G-Z)$ as follows:
take any partition $X=X_1\cup\ldots \cup X_q$ with $|X_i|=a-1$
for every $i\in [q]$, and colour $E(X_i,Y)$ in colour $i$.
Then each colour class is a bipartite graph with a partite set of size smaller than $a$. By the definition of $a$, there cannot be a monochromatic copy of $F$.

\medskip

We prove (ii) next. Consider any $q$-colouring
$\varphi: E(G)\rightarrow [q]$. Each vertex
$y\in Y$ has $bq$ neighbours in $Z_y$, and hence there must be
a subset $Z_y'\subseteq Z_y$ of size $b$ such that the edges from $y$ to $Z_y'$ are monochromatic. 
As we use only $q$ colours, there must be a subset $Y'\subseteq Y$ of $\frac{s}{q}$ 
vertices $y_1,\ldots,y_{s/q}$
such that, without loss of generality, the edges between $y_i$ and $Z_{y_i}'$ are all colour $1$ for each $i \in [s/q]$. Further, set 
$Z'=\bigcup\limits_{y_i\in Y'} Z_{y_i}'$.
Next, let $X=\{x_1,\ldots,x_r\}$. For every $y_i\in Y'$, we consider the vector $c_i:=(\varphi(x_1y_i),\ldots,\varphi(x_ry_i))\in [q]^r$, the \emph{colour profile} of $y_i$. As $|Y'|=s/q = q^r v(F)$, there must be at least $v(F)$ vertices in $Y'$ with the same colour profile $c$.
By symmetry, we may assume that $c_1=c_2=\ldots=c_{v(F)}=c$. We consider two cases. 

\medskip

{\bf Case 1:} There is a colour $i\in [q]$ that appears
at least $a$ times in $c$. This gives a copy of $K_{a,v(F)}$
between $X$ and $\{y_1,\ldots,y_{v(F)}\}$
that is monochromatic in colour $i$. As $K_{a,v(F)}$ contains a copy of $F$, we are done.

\medskip

{\bf Case 2:} Every colour is used exactly $(a-1)$ times in $c$.
In particular, we find a subset $X'\subseteq X$ of size $a-1$ such that the edges between $X'$ and $\{y_1,\ldots,y_{v(F)}\}$ are monochromatic in colour $1$. Using the edges between $y_i$ and $Z_{y_i}'$, for $i\in [v(F)]$, we find a monochromatic copy of $F$: embed $A$ into $\{y_1,\ldots,y_{v(F)}\}$ arbitrarily, embed $B_1$ into $Z'$ by respecting adjacency relation, and embed $B_{\geq 2}$ into $X'$ arbitrarily.
\end{proof}

\end{document}